\documentclass[a4paper,12pt]{article}

\usepackage[utf8]{inputenc} % Encodage UTF-8
\usepackage{lmodern} % Latin Modern, compatible T1
\usepackage[english]{babel} % Langue du document
\usepackage{a4wide}         %diminue les marges

\usepackage{amsmath, amssymb, amsthm, amsfonts , stmaryrd} % Mathématiques avancées
\usepackage{mathrsfs}  % Pour les lettres en script (\mathscr)
\usepackage{bbold}     % Pour \mathbb{1}
\usepackage{bigints}   % Intégrales grandes
\usepackage{upgreek}   % Lettres grecques droites (\upalpha, etc.)
\usepackage{systeme}   % Systèmes d'équations
\usepackage{calc}      % Calculs dans les dimensions

\usepackage{graphicx}
\graphicspath{{./images/}} % Chemin pour les images
\usepackage[font=small,labelfont=bf]{caption} % Légendes améliorées
\usepackage{subcaption} % Sous-figures

\usepackage{hyperref}

\newtheoremstyle{custom_theorem} % Nom du style
  {10pt} % Espace avant
  {10pt} % Espace après
  {\itshape} % Style du texte principal
  {} % Indentation (vide = pas d'indentation)
  {\bfseries} % Style du titre
  {.} % Ponctuation après le titre
  { } % Espace entre le titre et le texte
  {} % Spécification du titre

\newtheoremstyle{custom_remark} % Nom du style
  {10pt} % Espace avant
  {10pt} % Espace après
  {\normalfont} % Style du texte principal
  {} % Indentation (vide = pas d'indentation)
  {\bfseries} % Style du titre
  {:} % Ponctuation après le titre
  { } % Espace entre le titre et le texte
  {} % Spécification du titre

\theoremstyle{custom_theorem}
\newtheorem{theorem}{Theorem}[section]
\newtheorem{proposition}[theorem]{Proposition}
\newtheorem{lemma}[theorem]{Lemma}
\newtheorem{corollary}[theorem]{Corollary}

\theoremstyle{custom_remark}
\newtheorem{remark}{Remark}[section]
\newtheorem{example}{Example}[section]
\newtheorem{definition}[theorem]{Definition}
\newtheorem{assumption}[theorem]{Assumption}

\numberwithin{equation}{section}

\DeclareMathOperator{\tr}{Tr}

\newcommand{\GCC}{\mathcal{GCC}}

\renewcommand{\P}{\mathbb{P}}

\newcommand{\E}{\mathbb{E}}
\newcommand{\N}{\mathbb{N}}
\newcommand{\1}{\mathbb{1}}
\newcommand{\C}{\mathbb{C}}

\renewcommand{\H}{\mathcal{H}}
\newcommand{\CC}{\mathcal{C}}
\newcommand{\M}{\mathcal{M}}

\newcommand{\A}{\mathcal{A}}
\newcommand{\B}{\mathcal{B}}

\newcommand{\G}{\mathcal{G}}

\newcommand{\dd}{\mathrm{d}}
\renewcommand{\Im}{\mathrm{Im}}
\renewcommand{\Re}{\mathrm{Re}}

\newcommand{\D}{\mathcal{D}}

\newcommand{\X}{\mathcal{X}}
\newcommand{\Y}{\mathcal{Y}}

\DeclareUnicodeCharacter{2212}{\ensuremath{-}}

\title{Convergence rate of the diagonal-valued Cauchy-transform for permutation invariant random matrices}
\author{Alexis Imbert \thanks{Universit\'{e} de Bordeaux, Institut de Math\'{e}matiques de Bordeaux, 351 Cours de la Lib\'{e}ration, 33400 Talence, France. Email : \href{mailto:alexis.imbert@math.u-bordeaux.fr}{alexis.imbert@math.u-bordeaux.fr}}}

\begin{document}

\maketitle

\begin{abstract}
    Let $A$ be a permutation invariant random matrix and $B$ another random matrix. We give a quantitative bound on the difference between the diagonal of the resolvent of $A+B$ and the diagonal of the resolvent of the free sum with amalgamation over the diagonal of $A$ and $B$. Moreover, we improve the rate of convergence whenever the matrices $A$ and $B$ are sparse and bounded in operator norm. Doing so, we explicitly construct the free sum over the diagonal of $A$ and $B$ as an adjacency operator of a weighted locally finite graph. 
\end{abstract}

\tableofcontents

\section{Introduction}
    Understanding the asymptotic spectrum of the sum of two random matrices is a classical problem in random matrix theory. From Voiculescu's pioneering work \cite{voiculescu_operations_2018}, we know that when one of the two matrices is invariant in law under conjugation by unitary matrices the two matrices are asymptotically free. In that scenario, provided the empirical spectral distribution (ESD) of the two matrices converge, the limiting measure of the sum is the free additive convolution of the marginal limiting measures. One of the main tool to prove this kind of convergence is the moment method which shows convergence of the Cauchy transform (or equivalently the normalized trace of the resolvent) toward some analytic function on the right domain. This proves in particular that the limit of the Cauchy transform is still the Cauchy transform of some measure that we can now characterize. Moreover, still in that scenario, provided some smoothness assumption on the limiting ESD and boundedness of the norm of the matrices, it has been shown that the convergence of the Cauchy transform of the sum toward its limit happens at speed $N^{-1}$ (where $N$ is the size of the matrix). The first result was obtained by Chatterjee in \cite{chatterjee_concentration_2007} which was later improved by Kargin in \cite{kargin_concentration_2012} and \cite{kargin_subordination_2015}. These results show a convergence not only of the Cauchy transform but of the diagonal of the resolvent. In a series of following paper, Bao, Erdos and Schnelli \cite{bao_convergence_2017,bao_local_2017} proved that the optimal speed was $N^{-1}$ in the bulk of the limiting measure.

In this article, we are interested in a more general case: when one of the two matrices is invariant in law by conjugation by permutation matrices. In that scenario, provided that the matrices converge in traffic distribution and verify some factorization property Male \cite{male_traffic_2020} proved that the random matrices at stake are asymptotically traffic free. Moreover, it has been shown in \cite{au_large_2021} that permutation invariant random matrices are asymptotically free over the diagonal. The latter is a particular case of operator-valued freeness \cite{mingo_free_2017,williams_analytic_2017,popa_non-commutative_2013}. Therefore, the asymptotic joint traffic (resp. diagonal) distribution of the two matrices can be recovered just from their marginal traffic (resp. diagonal) distribution (see Definition \ref{def: traffic distribution} for more details). Then, if we are interested only in the asymptotic ESD of the sum, since the joint traffic and diagonal distribution encompass the joint $*$-distribution, it is enough to know the marginal asymptotic traffic or diagonal distributions. For a single permutation invariant random matrix, and in particular for adjacency matrix of random regular graphs, several work has been done in the recent years, see for instance \cite{bauerschmidt_local_2017,bauerschmidt_local_2019}. 

Let $X$ be a matrix of size $N$, and let $D$ be a diagonal matrix whose entries all have positive imaginary part. The diagonal-valued Cauchy-transform of $X$ is defined as $G_{X}(D):=\Delta((D-X)^{-1})$ where for any matrix $M$, $\Delta(M):=(M_{i,j}\delta_{i,j})_{1\leq i,j\leq N}$. A more general definition is given in Section 2. One knows from the scalar case that it is customary to study the Cauchy transform of a measure to recover some information about it. The same goes for operator-valued Cauchy-transform (see \cite{popa_non-commutative_2013,williams_analytic_2017,voiculescu_operations_2018}). 

Provided $A$ and $B$ satisfy Assumptions \ref{H} and \ref{assumption frobenius} at speed $M$, Theorem \ref{thm A} states that for $|z|>r$,
$$\E\frac{1}{N}||G_{A+B}(zI)-G_{a+b}(zI)||_{F}^{2}\leq (r/|z|)^{2cM},$$
where $r,c$ are some constant depending on the Assumptions on $A$ and $B$ and $a+b$ is the free sum over the diagonal of $A$ and $B$ explicitly constructed in Section 3. Note that $M$ is at most of order $\log N/\log\log N$ so that the rate of convergence is at most $N^{-\beta/\log\log N}$.

Assumption \ref{H} controls the growth of traffic moments for graph monomials of size $M$ whereas Assumption \ref{assumption frobenius} controls the Frobenius norm of the $M$-th power of $A$ and $B$ (or equivalently, their Schatten-$M$ norm), where $M$ goes to infinity with $N$ and is at most of order $\log N/\log\log N$.

Assumption \ref{H} is satisfied for a wide range of matrix models such as the Wigner matrices, the heavy Wigner matrices that contain in particular the adjacency matrices of Erdos-Renyi graphs with parameter at least $d/N$ for some fixed  $0<d$ and sparse bounded matrices. Moreover, Assumption \ref{H} is stable by entry-wise product, (see the last paragraph of Section 4.2). It can be thought in the following way, given a graph with at most $M$ vertices and edges, we control the number of injective maps sending this graph to the adjacency graph of the matrices $A$ and $B$.

However, Assumption \ref{assumption frobenius} is more restrictive. Indeed, looking at adjacency matrices of Erdos-Renyi random graphs for instance, it applies only when the parameter $d$ also goes to infinity with $N$. Note however that it is more lenient than asking for the operator norm of the matrices to be uniformly bounded. Indeed, take again Erdos-Renyi graphs, when the parameter $d$ is of order smaller than $\log N$, it is shown in \cite{benaych-georges_largest_2019} that the operator norm of the adjacency matrix of Erdos-Renyi random graphs with such parameter goes to infinity with $N$. However, for $M$ small enough, that goes to infinity with $N$, Assumption \ref{assumption frobenius} is verified for such a model. Similarly, for Heavy-Wigner matrices, if the (properly normalized) moment of order $k$ of the entries is not zero for $k\geq3$ (see \cite{zakharevich_generalization_2006,male_limiting_2012}), Assumption \ref{assumption frobenius} cannot be verified at any speed $M$, see Section 4.3. We therefore restrict ourselves with what we call diluted Wigner matrices, where the condition on the moments of the entries is stated in Equation \eqref{moment condition diluted wigner}.

Theorem \ref{thm B} requires that the two matrices are uniformly sparse and bounded in operator norm, but controls the difference of the Cauchy-transform evaluated in any diagonal matrices and improves the rate of convergence to $N^{-\gamma}$ for some positive explicit constant $\gamma$. It can thus be applied to uniform $d$-regular graphs for $d$ bounded, continuing the work of \cite{bauerschmidt_local_2019} in the case of the addition of two such matrices. The case where matrices are both bounded in operator norm is easier to deal with because it allows us to control the rest of the series expansion of the resolvent. Note that it implies Assumption \ref{assumption frobenius}.

Both proofs rely on a moment method where Assumption \ref{assumption frobenius}, resp. the uniform bound in operator norm, allows us to control the rest in the series expansion of the resolvent for Theorem \ref{thm A}, resp. Theorem \ref{thm B}.

\paragraph{Organization of the paper}
In Section 2, we introduce the necessary tools from operator-valued free probability so that we are able to state Theorem \ref{thm A} and Theorem \ref{thm B}. In Section 3, we realize the free sum with amalgamation over the diagonal of $A$ and $B$ as an adjacency operator of a locally finite graph. Section 4 is dedicated to Assumptions \ref{H} and \ref{assumption frobenius} and to showing that many random matrix models verify those assumptions. In Section 5, we prove the two main theorems.

\section{Statement of the main Theorem}
    In this section, we provide the necessary background in operator-valued free probability for us to be able to state our main theorems. We specifically give a construction of the free sum with amalgamation over the diagonal of two random matrices. 
    \subsection{Notions of diagonal-valued free probability}
        We recall some basic notions of operator-valued free probability. One can look into \cite{mingo_free_2017, popa_non-commutative_2013, williams_analytic_2017} and the references therein for a more detailed introduction to operator-valued non commutative probabilities.

\begin{definition}
    A $C^{*}$-\emph{operator-valued probability space} is a triplet $(\A,\B,E)$ where $\A$ is a $C^{*}$-algebra, $\B\subset\A$ is a unital $C^{*}$-subalgebra of $\A$ and $E:\A\rightarrow\B$ is a completely positive, unital, linear map that satisfies the $\B$-bi-modularity condition: $E(b_{1}ab_{2})=b_{1}E(a)b_{2}$ for all $b_{1},b_{2}\in \B$ and all $a\in \A$. The map $E$ is then called a conditional expectation.

    We say that two elements $x,y\in\A$ are free over $\B$ (or $\B$-free, or free with amalgamation over $\B$) whenever the following statement holds. Let $\A_{1}\subset\A$, (resp.$\A_{2}\subset\A$) be the algebra generated by $x$ (resp. $y$) and $\B$. For all $n\geq1$ and $a_{1},\cdots,a_{n}$ such that they are 
    \begin{itemize}
        \item Centered: for all $1\leq i \leq n$, $E(a_{i})=0$,
        \item Alternating: for all $1\leq i \leq n$, $a_{i}\in\A_{j_{i}}$ with $j_{1}\neq j_{2},\cdots,j_{n-1}\neq j_{n}$, 
    \end{itemize}
    then 
    \begin{equation*}
        E(a_{1}a_{2}\cdots a_{n})=0.
    \end{equation*}
\end{definition}

Let $\X$ be a self-adjoint formal variable algebraically free from $\B$, we define $\B\langle\X\rangle$ as the $\B$-bimodule $*$-algebra of non-commutative polynomials over $\B$, that is the linear span of monomials of the form $b_{1}\X b_{2}\X\cdots\X b_{n}$ with $b_{i}\in\B$ for $1\leq i \leq n$, with $(b_{1}\X b_{2}\X\cdots\X b_{n})^{*}=b_{n}^{*}\X b_{n-1}^{*}\X\cdots\X b_{1}^{*}$.

\begin{definition}
    Let $a\in(\A,\B,E)$ as above. The $\B$-valued distribution (or $E$-distribution) of $a$ is the map
    \begin{align*}
        \mu_{a}:\,&\B\langle\X\rangle\rightarrow\B\\
        &P\mapsto E(P(a)),
    \end{align*}
    where $P(a)$ refers to the evaluation map of the non-commutative polynomial $P(\X)\in\B\langle\X\rangle$.
\end{definition}

We define an abstract set of distributions $\Sigma_{0}$ as the set of unital, positive $\B$-bimodular maps $\mu:\B\langle\X\rangle\rightarrow\B$, such that 
\begin{enumerate}
    \item For any $n\in\N$ and any non commutative polynomial $P_{1}(\X),\cdots,P_{n}(\X) \in\B\langle\X\rangle$, we have $\left[\mu(P_{i}(\X)^{*}P_{j}(\X))\right]_{1\leq i,j\leq n}\geq 0$ in $\M_{n}(\B)$,
    \item There exists $M>0$ such that for all $b_{1},\cdots,b_{n}\in \B$,
    \begin{equation*}
        ||\mu(b_{0}\X b_{1}\cdots\X b_{n})||\leq M^{n}||b_{0}||\cdots||b_{n}||.
    \end{equation*}
\end{enumerate}

The norm above is the norm inherited from the $C^{*}$-algebra, which is the operator norm whenever one deals with random matrices.

It turns out that these two conditions characterize the $\B$-valued distribution of an element. 

\begin{theorem}[Proposition 2.2 of \cite{popa_non-commutative_2013}]\label{caractérisation mesures algébriques}
    Let $\mu$ be a positive unital $\B$-bimodular map that satisfies the first point above. It satisfies the second point if and only if there exists a \break $\B$-valued non-commutative probability space $(\A,\B, E)$ and a self-adjoint element $a\in\A$ such that $\mu=\mu_{a}$.
\end{theorem}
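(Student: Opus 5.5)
The easy direction is the forward one. Suppose $\mu=\mu_a$ for a self-adjoint $a$ in a $C^{*}$-operator-valued probability space $(\A,\B,E)$. Since $E$ is completely positive and unital, it is contractive, with $\|E\|=\|E(1)\|=1$ by the Russo--Dye theorem. Submultiplicativity of the $C^{*}$-norm then gives
\[
\|\mu(b_0\X b_1\cdots \X b_n)\|=\|E(b_0ab_1\cdots ab_n)\|\leq \|a\|^{n}\|b_0\|\cdots\|b_n\|,
\]
so the second point holds with $M=\|a\|$ (or $M=\|a\|+1$ if a strictly positive constant is required).

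For the converse I would use a GNS-type construction for $\B$-valued maps, namely a Hilbert $C^{*}$-module.

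\textbf{Step 1: the semi-inner product.} Equip $\B\langle\X\rangle$ with the $\B$-valued semi-inner product $\langle P,Q\rangle:=\mu(P^{*}Q)$. The first point (complete positivity of the kernel) says exactly that this is positive semi-definite in the Hilbert-module sense, so the Cauchy--Schwarz inequality $\langle P,Q\rangle^{*}\langle P,Q\rangle\leq \|\langle P,P\rangle\|\,\langle Q,Q\rangle$ holds.

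\textbf{Step 2: the Hilbert module.} Quotient by the null space $\mathcal N=\{P:\mu(P^{*}P)=0\}$, which is a right $\B$-submodule and is also invariant under left multiplication by $\B$ and by $\X$. Complete the quotient to a Hilbert $\B$-module $\mathcal H$. Right multiplication by $\B$ gives the module structure. Left multiplication by $b\in\B$ is adjointable and bounded, since positivity of $[\mu(P_i^{*}b^{*}bP_j)]$ versus $\|b\|^{2}[\mu(P_i^*P_j)]$ follows from the first point applied to $\|b\|^2-b^*b=c^*c$. Left multiplication by $\X$ is symmetric: $\langle \X P,Q\rangle=\mu(P^{*}\X Q)=\langle P,\X Q\rangle$.

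\textbf{Step 3: boundedness of $\X$, the key step.} This is the main obstacle: showing that multiplication by $\X$ extends to a bounded operator with norm at most $M$, using only the growth condition. My plan is an iterated Cauchy--Schwarz (power trick). For $P$ with $\|P\|_{\mathcal H}\leq 1$, write
\[
\|\langle \X P,\X P\rangle\|=\|\langle P,\X^{2}P\rangle\|\leq \|\langle \X^{2}P,\X^{2}P\rangle\|^{1/2}.
\]
Iterating gives $\|\X P\|^{2}\leq \|\langle \X^{2^k}P,\X^{2^k}P\rangle\|^{2^{-k}}$. Writing $P=\sum b_0\X b_1\cdots$, the right-hand side is $\mu$ applied to monomials of degree $2^{k+1}+2\deg P$, so the second point bounds it by $(C_P M^{2^{k+1}+2\deg P})^{2^{-k}}\to M^{2}$ as $k\to\infty$. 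Hence $\|\X P\|\leq M\|P\|$. I expect the delicate part to be the bookkeeping that turns the Cauchy--Schwarz inequality into a norm bound in the module setting, where positivity must be read in $\B$ and norms taken only at the end.

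\textbf{Step 4: the probability space.} With $\X$ bounded and self-adjoint, let $\A$ be the $C^{*}$-algebra of adjointable operators on $\mathcal H$. Embed $\B$ as left multiplications; this embedding is injective because $\langle 1,b1\rangle=b$. Set $a:=\X$ and $E(T):=\langle 1,T1\rangle$, where $1$ is the class of the unit polynomial. Then $E$ is completely positive, unital and $\B$-bimodular, and $E(b_0ab_1\cdots ab_n)=\mu(b_0\X b_1\cdots \X b_n)$. So $\mu=\mu_a$. If one insists that $E$ map onto a subalgebra $\B\subset\A$, the embedding above already realizes this.
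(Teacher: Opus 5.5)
The paper gives no proof of this statement. It is quoted from Popa--Vinnikov \cite{popa_non-commutative_2013} and used only to attach Cauchy transforms to elements of $\Sigma_{0}$, so there is nothing internal to compare against. Judged on its own, your argument is the standard GNS construction over a Hilbert $C^{*}$-module that underlies the cited result, and it is correct. This holds provided ``$\B$-valued probability space'' is read as the $C^{*}$-version defined in the paper, which the forward direction needs. The forward direction via $\|E\|=\|E(1)\|=1$ is fine.

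In the converse, each point you leave implicit closes in a line.

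First, only the case $n=1$ of the first point is ever used. Together with bimodularity it is equivalent to the full matrix condition, since $\sum_{i,j}b_{i}^{*}\mu(P_{i}^{*}P_{j})b_{j}=\mu(Q^{*}Q)$ with $Q=\sum_{j}P_{j}b_{j}$.

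Second, $\mathcal{N}$ is a subspace and is $\X$-invariant. By Cauchy--Schwarz, $P\in\mathcal{N}$ iff $\langle Q,P\rangle=0$ for all $Q$, and then $\langle Q,\X P\rangle=\langle \X Q,P\rangle=0$.

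Third, the ``bookkeeping'' you worry about in Step 3 is just $\|\langle P,Q\rangle\|\leq\|P\|\,\|Q\|$. This follows from Cauchy--Schwarz together with $0\leq x\leq y\Rightarrow\|x\|\leq\|y\|$. For $\|P\|\leq 1$ it gives $\|\X^{2^{j}}P\|^{2}\leq\|\X^{2^{j+1}}P\|$. The constant $C_{P}=\bigl(\sum_{\alpha}M^{n_{\alpha}}\prod_{i}\|b_{i}^{\alpha}\|\bigr)^{2}$ does not depend on $k$, so the limit is indeed $M^{2}$.

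Finally, Step 4 goes through as follows:
\begin{itemize}
\item The continuous extension of $\X$ still satisfies $\langle\X\xi,\eta\rangle=\langle\xi,\X\eta\rangle$ on all of $\mathcal{H}$, so it is a self-adjoint element of the $C^{*}$-algebra of adjointable operators.
\item The map $b\mapsto L_{b}$ is an injective unital $*$-homomorphism, hence isometric with closed range, so its image is a unital $C^{*}$-subalgebra.
\item Bimodularity of $E(T)=\langle\xi_{1},T\xi_{1}\rangle$, with $\xi_{1}$ the class of $1$, follows from $L_{b}\xi_{1}=\xi_{1}b$ and the right $\B$-linearity of adjointable operators.
\end{itemize}
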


As in the scalar case, it is possible to define some analytic transforms associated to a distribution $\mu$. Every element $x\in\A$ can uniquely be written as $\Re(x)+i\Im(x)$, where $\Re(x)=\frac{x+x^{*}}{2}$ and $\Im(x)=\frac{x-x^{*}}{2i}$. We denote $\mathbb{H}^{+}(\B):=\{b\in\B,\,\Im(b)>0\}$, where $y>0$ stands for $y\geq 0$ and $y$ invertible or equivalently, $y\geq \varepsilon 1_{\A}$ for some strictly positive real number $\varepsilon$. Elements in $\mathbb{H}^{+}(\B)$ are all invertible. Furthermore, for a self-adjoint element $x=x^{*}$ and an element $b\in\mathbb{H}^{+}(\B)$, $b-x$ is still invertible. Hence, the operator upper half-plane $\mathbb{H}^{+}(\B)$ is the appropriate domain to define the Cauchy transform of a self adjoint element $x=x^{*}$ 

Let $x$ be a self-adjoint element in an operator valued probability space $(\A,\B, E)$, 
\begin{itemize}
    \item The operator-valued Cauchy transform of $x$ is 
    \begin{equation}
        G_{x}(b):=E((b-x)^{-1})=\mu_{x}((b-\X)^{-1}).
    \end{equation}
    \item The reciprocal Cauchy transform of $x$ is 
    \begin{equation}
        F_{x}(b):=G_{x}(b)^{-1}.
    \end{equation}
    It verifies $\Im(F_{x}(b))\geq \Im(b)$ (see Remark 2.5 in \cite{belinschi_infinite_2012}).
\end{itemize}
They are both well defined on $\mathbb{H}^{+}(\B)$ and $F_{x}$ takes values in $\mathbb{H}^{+}(\B)$. Note that they can be defined for an arbitrary measure $\mu\in\Sigma_{0}$ thanks to Theorem \ref{caractérisation mesures algébriques}. It should also be noted that, unlike the scalar case, they do not characterize the distribution $\mu$. It is shown in the pioneering work of Voiculescu \cite{voiculescu_free_1992} that one needs more information to retrieve the distribution $\mu$, namely the \emph{fully matricial extension} of the Cauchy transform. However, we do not need this here. 

We also recall simple facts about the $\B$-valued free sum of two non commutative $\B$-valued variables $x$ and $y$ in a $C^{*}$ operator-valued non commutative probability space $(\A,\B,E)$. We denote $\A_{1}=\B\langle\X\rangle$ (resp. $\A_{2}=\B\langle\Y\rangle$) the algebra generated by $\B$ and a formal variable $\X$ (resp. $\Y$). Note that there are canonical injective maps from $\A_{1}$ and from $\A_{2}$ to $\B\langle \X,\Y\rangle$, where $\X$ and $\Y$ are algebraically free. We identify $\A_{1}$ and $\A_{2}$ to their image through these morphism. We define the map $\mu:\B\langle \X,\Y\rangle\rightarrow\B$ through the following rule. For any $n\geq 1$, let $P_{1},\cdots,P_{n}$ such that $P_{i}\in\A_{j_{i}}$ with $j_{1}\neq j_{2},\cdots,j_{n-1}\neq j_{n}$, denoting $\nu(P_{i})=\mu_{x}(P_{i})$ if $j_{i}=1$ and $\nu(P_{i})=\mu_{y}(P_{i})$ if $j_{i}=2$, we have 
\begin{equation}
    \mu\left((P_{1}-\nu(P_{1}))\cdots(P_{n}-\nu(P_{n}))\right)=0
\end{equation}

Finally for any polynomial $P\in\B\langle\mathcal{Z}\rangle$ we denote $P(\X+\Y)\in\B\langle\X,\Y\rangle$ its image through $\mathcal{Z}\mapsto \X+\Y$ and $x\boxplus_{\B} y$ as the element having distribution $\mu_{x\boxplus_{\B}y}$ defined as
\begin{equation*}
    \begin{split}
        \mu_{x\boxplus_{\B}y}:\,&\B\langle\mathcal{Z}\rangle\rightarrow \B\\
        &P\mapsto\mu(P(x+y)),
    \end{split}
\end{equation*}
with the natural evaluation map.

Hence, there exists a canonical way to build the amalgamated (over $\B$) free product between two non commutative operator-valued variable. We provide in Section 3 a more explicit construction in the case of random matrices.

\begin{remark}
    Two cases of $C^{*}$-operator-valued probability spaces will be of importance later.
    \begin{itemize}
        \item  The triplet $(\M_{N}(\C),\D_{N}(\C),\Delta)$ where $\D_{N}(\C)$ is the (commutative) \textbf{algebra of diagonal matrices} and $\Delta$ is the map defined by $\Delta(M)=(M_{i,j}\1_{i=j})_{1\leq i,j\leq N}$, is a $C^{*}$-operator-valued probability space.
        \item  Moreover, one can also take any unital $*$-algebras $\B\subset\A\subset B(H)$, such that they are both closed in the norm topology of $B(\H)$ and take any $*$-morphism from $A$ to $B$ which will automatically be completely positive (see \cite{nica_lectures_2006} Chapter 3 and \cite{conway_course_2000} Example 34.3). 
    \end{itemize}
\end{remark}

\begin{definition}
    A random matrix $M$ is said to be permutation invariant if for any $\sigma\in\mathfrak{S}_{N}$, the matrix $V_{\sigma}MV_{\sigma}^{-1}$ has the same law as $M$, where $V_{\sigma}=(\1_{i=\sigma(j)})_{1\leq i,j\leq N}$.
\end{definition}

A key result from \cite{au_large_2021} is that independent permutation invariant random matrices that are bounded in operator norm are asymptotically free over the diagonal. We state a weaker version of this result. 

\begin{theorem}[Theorem 1.2 of \cite{au_large_2021}]
    Let $A_{1}$ and $A_{2}$ be independent permutation invariant random matrices that are bounded in operator norm. Let $P_{1},\cdots,P_{n}\in \D_{N}\langle\X\rangle$ and $j_{1}\neq j_{2}\neq \cdots\neq j_{n}$, then denoting
    \begin{equation*}
        \varepsilon_{N}:=\Delta\left((P_{1}(A_{j_{1}})-\Delta(P_{1}(A_{j_{1}})))\cdots (P_{n}(A_{j_{n}})-\Delta(P_{n}(A_{j_{n}})))\right),
    \end{equation*}
    we have
    \begin{equation}
        \E\left[\frac{1}{N}\mathrm{Tr}(\varepsilon_{N}\varepsilon_{N}^{*})^{p}\right]\underset{N\rightarrow\infty}{\longrightarrow} 0.
    \end{equation}
\end{theorem}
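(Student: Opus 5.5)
The plan is a moment computation using the traffic (graph-monomial) formalism together with permutation invariance. The centering and the $\Delta$ make only ``cactus-free'' graph configurations survive.

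\textbf{Reduction to polynomial entries.} By the bimodularity of $\Delta$, I may absorb the diagonal coefficients of each $P_k\in\D_N\langle\X\rangle$ into the monomials. The diagonal coefficients are deterministic, or independent of the $A_j$, and bounded in operator norm. Write $Q_k:=P_k(A_{j_k})-\Delta(P_k(A_{j_k}))$. This matrix has zero diagonal, and $\|Q_k\|\le 2\|P_k(A_{j_k})\|$ is uniformly bounded. The quantity $\frac1N\mathrm{Tr}(\varepsilon_N\varepsilon_N^*)^p$ expands as a normalized sum
\[
\frac1N\sum_{i_1,\dots,i_p}\ \prod_{r=1}^{p}\big(Q_1\cdots Q_n\big)_{i_ri_r}\overline{\big(Q_1\cdots Q_n\big)_{i_ri_r}}.
\]
Each diagonal entry $(Q_1\cdots Q_n)_{ii}$ is a sum over closed walks $i=v_0,v_1,\dots,v_n=i$. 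Consecutive vertices are distinct, because the $Q_k$ have zero diagonal.

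\textbf{Using permutation invariance and independence.} Expand the whole expectation as a sum over maps from the vertex set of a fixed graph $T$ into $[N]$. The graph $T$ is made of $2p$ cycles of length $n$, glued by pairs at the points $i_r$. Group these maps according to the partition $\pi$ of the vertices they induce, so that each term becomes an injective traffic moment $\E\,\tau^0_N[T^\pi]$. By independence, the expectation factorizes over the $A_1$-edges and the $A_2$-edges. Permutation invariance of each $A_j$ lets me replace the injective sum by $N^{\#V(T^\pi)}$ times an averaged entry moment. The operator-norm bound gives the standard estimate $\E\,\tau_N^0[T^\pi]=O(N^{\#V(T^\pi)-1-\text{(excess)}})$, which I take from the traffic results of \cite{male_traffic_2020,au_large_2021}. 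The upshot is that only those $\pi$ for which $T^\pi$ is a \emph{cactus} contribute at order $1$: its $2$-edge-connected components are cycles attached in a tree-like way. Every other quotient is $O(N^{-1})$ after normalization by $\frac1N$.

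\textbf{The combinatorial vanishing.} This is the main obstacle. I must show that for a cactus quotient, the constraints force a zero contribution. On a cactus, the walk $v_0,\dots,v_n$ decomposes along the tree of cycles. Since the colors alternate ($j_k\ne j_{k+1}$), some block $Q_k$ must correspond to a subwalk that returns to its starting vertex while all its edges are of a single color. I would show that in a cactus each maximal monochromatic stretch of consecutive factors lies in a single leaf ``petal''. That stretch then starts and ends at the same vertex, so it contributes a diagonal entry of $Q_k$, which is zero. Note that $Q_k$ can be a polynomial in $A_{j_k}$, so one factor spans several edges of one color, and the argument must handle these stretches, not single edges. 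The proof is an induction on the number of cycles of the cactus, peeling off a leaf cycle. Because the gluing points $i_r$ are diagonal, the leaf argument must be applied to the full closed walk of length $n$ at $i_r$. This walk is non-backtracking at the level of blocks, because consecutive blocks have different colors. I expect the work to be a careful case analysis ensuring that no mixed-color cycle in a cactus can survive the alternation and centering.

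Combining these steps, the expectation equals the sum of the cactus terms, which is zero, plus $O(N^{-1})$. This gives the convergence to $0$, in fact at rate $N^{-1}$ for fixed $p$ and $n$.
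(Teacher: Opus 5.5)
The paper does not prove this statement (it is quoted from \cite{au_large_2021}), so your proposal has to stand on its own. It has a genuine gap at the step where you declare cacti to be the order-one quotients and then try to kill them by peeling petals.

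For two independent permutation-invariant families, the size of $\E\,\mathrm{Tr}^0[T^\pi]$ is governed not by the cactus structure of $T^\pi$ but by its graph of colored components. Conjugating one family by an independent uniform permutation gives exactly
\[\E\,\mathrm{Tr}^0[T^\pi]=\E\,\mathrm{Tr}^0[T^\pi_1]\;\E\,\mathrm{Tr}^0[T^\pi_2]\;\frac{(N-|V_1|)!\,(N-|V_2|)!}{(N-|V|)!\,N!},\]
with $T^\pi_j$ the color-$j$ subgraph. This is what produces the factor $N^{\eta(\GCC(T^\pi))}$ of Lemma \ref{lemma : injective trace A et B}.

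Your dichotomy already fails for $n=2$, $p=1$. The unquotiented graph, two $2$-cycles $i\to u\to i$ and $i\to w\to i$ glued at $i$, is a cactus in the usual sense (every block is a cycle). Its petals are bichromatic, so no monochromatic stretch closes up and your vanishing argument gives nothing. Yet this term is $O(N^{-2})$ after the $\frac1N$, because its $\GCC$ has $\eta=-2$. The dominant term is actually the non-cactus quotient $u=w$ (four parallel edges, $\eta=-1$). Conversely, ``non-cactus implies negligible'' is false for permutation-invariant matrices: for a randomly relabeled disjoint union of copies of $K_4$, $\frac1N\mathrm{Tr}^0$ of the test graph $K_4$ equals $6$.

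The correct combinatorial step is short and uses your non-backtracking intuition in the right graph. Suppose $\GCC(T^\pi)$ is a tree. For each of the $2p$ cycles, consider the sequence $C(e_1),v_1,C(e_2),\dots,v_{n-1},C(e_n)$, closed up through $v_0$ if $j_n\neq j_1$ (if $j_n=j_1$ then already $C(e_n)=C(e_1)$). It is a closed walk in this bipartite tree, so it backtracks at its point farthest from $C(e_1)$. That point cannot be a connector, since consecutive components have different colors. Hence it is some $C(e_k)$ with $v_{k-1}=v_k$ in $T^\pi$, and the term vanishes because $Q_k$ has zero diagonal. This is exactly the mechanism of the proof in Section 3 that $a$ and $b$ are $\Delta$-free.

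Second, even with the right combinatorics you still need every quotient with $\eta(\GCC(T^\pi))\le -1$ to contribute $O(N^{-1})$. That requires bounds on the single-color injective traces in the factorization. Lemma \ref{lemma : injective trace A et B} gets these from Assumption \ref{H} (one power of $N$ per connected component), but operator-norm boundedness does not provide them. For example, $A=\frac{1}{\sqrt N}e_{\sigma(1)}\mathbf{1}^{T}$ with $\sigma$ uniform is permutation invariant with $\|A\|_{op}=1$, yet the injective trace of a suitably oriented star with three leaves is of order $N^{3/2}$. Such components genuinely occur, since colored components of $T^\pi$ need not be $2$-edge-connected: in the example above they are paths $u-i-w$, and for general $p$ they are stars with $2p$ leaves.

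What is needed is a Mingo--Speicher-type bound on graph sums, of order $N^{\max(1,\ell/2)}$ when the tree of $2$-edge-connected components has $\ell$ leaves. It must be combined with a counting argument showing that these leaves are paid for by the extra negative powers in $N^{\eta(\GCC)}$, using that $T^\pi$ itself is $2$-edge-connected. This is the nontrivial analytic input of \cite{au_large_2021}, so taking your key estimate from \cite{male_traffic_2020,au_large_2021} essentially assumes the result.

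Two smaller points. First, since $\varepsilon_N$ is diagonal, $\frac1N\mathrm{Tr}(\varepsilon_N\varepsilon_N^*)^p=\frac1N\sum_i|(Q_1\cdots Q_n)_{ii}|^{2p}$, so $T$ is a bouquet of $2p$ cycles at a single root. Your sum over independent $i_1,\dots,i_p$ instead equals $N^{p-1}(\frac1N\mathrm{Tr}\,\varepsilon_N\varepsilon_N^*)^p$. Second, with deterministic diagonal coefficients the $Q_k$ are not permutation invariant. You should expand them into $A_{j_k}$-paths carrying bounded vertex weights rather than treat them as atomic edges; the argument above survives, since each such path lies in one colored component.
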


Our goal is to compare quantitatively the law of $A+B$ to the law of the amalgamated free sum of $A$ and $B$. Heuristically, to build the amalgamated free sum, we build from the data of $A$ and $B$ a bounded operator $a+b$ on some Hilbert space which is the adjacency operator of some graph. The important feature of this graph is that its graph of colored component with respect to the families $A$ and $B$ is a tree (see Section 4.1).

    \subsection{Main theorems and sketch of the proof}
        Let $A$ be permutation invariant, we can consider $B$ to also be permutation invariant. Indeed, denoting $\Tilde{B} :=VBV^{-1}$ where $V$ is a uniform permutation matrix, we have $\E\tr G_{A+B}(zI)=\E\tr G_{A+\Tilde{B}}(zI)$. Therefore in the remainder of this paper, and in particular in the proofs of the following theorems, we consider $A$ and $B$ to be permutation invariant random matrices.

For $A$ and $B$ two random matrices we denote by $a+b$ their free sum over the diagonal, i.e. the operator constructed in Section 3. In this context, $a$ and $b$ are still free over the algebra of diagonal matrices, hence $G_{a+b}(D)$ takes values in $\D_{N}(\C)$. For simplicity, we denote $G_{X}(z)=G_{X}(zI)$ the diagonal-valued Cauchy transform of $X$ taken at a scalar matrix $zI$. For a matrix $M$, we denote by $||M||_{F}:=\sqrt{\tr(MM^{*})}$ the Frobenius norm of $F$ and $||M||_{op}$ its operator norm.

\begin{theorem}\label{thm A}
    Let $A$ and $B$ be random Hermitian matrices that satisfy Assumption \ref{H} at speed $M$ with constants $h_{1}$ and $h_{2}$ respectively and Assumption \ref{assumption frobenius} at speed $M$ with constant $C$ for both. Assume that one of them is permutation invariant. Let $0\leq c \leq \frac{1}{h_{1}+h_{2}+6}$ and $|z|^{2}> \max\{4C, 2c^{2}\}=:r_{0}^{2}$, we have
\begin{equation}
    \E\frac{1}{N}||G_{a+b}(z)-G_{A+B}(z)||_{F}^{2}\leq \frac{4}{|z|^{2}}\left(\frac{r_{0}}{|z|}\right)^{2cM}
\end{equation}
\end{theorem}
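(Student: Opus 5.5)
We expand both resolvents in Neumann series at $|z|$ large and compare term by term up to order $2cM$. For $|z|$ large,
\[
G_{A+B}(z)=\sum_{k\geq0} z^{-k-1}\Delta\big((A+B)^{k}\big),\qquad G_{a+b}(z)=\sum_{k\geq0} z^{-k-1}E\big((a+b)^{k}\big).
\]
Set $K:=\lfloor cM\rfloor$ and split each series into a head ($k\leq K$) and a tail ($k>K$). Using $\|X+Y\|_F^2\le 3(\ldots)$-type inequalities, $\E\frac1N\|G_{a+b}-G_{A+B}\|_F^2$ is bounded by a constant times the sum of three terms: the head difference, the tail of $A+B$, and the tail of $a+b$.

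\textbf{Tails.} By Assumption \ref{assumption frobenius} at speed $M$ with constant $C$, $\E\frac1N\|(A+B)^{k}\|_F^2$ is controlled by $(4C)^{k}$ up to $k\sim M$. Beyond $M$ we use that $\|\Delta(X)\|_F\le\|X\|_F$. For the genuinely infinite remainder, we write $(z-A-B)^{-1}=\sum_{k<K}z^{-k-1}(A+B)^k+z^{-K}(A+B)^K(z-A-B)^{-1}$ and bound $\|(z-A-B)^{-1}\|_{op}\le 1/|\Im z|$. To avoid the $\Im z$ dependence, we may need to assume $z$ is such that this resolvent is bounded by $1/(|z|-\|A+B\|)$ on a high-probability event. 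The cleaner route is to control only the remainder $z^{-K}(A+B)^K G$ through Cauchy–Schwarz with the Schatten-$M$ norm. This yields a term of order $|z|^{-2}(4C/|z|^2)^{K}\le |z|^{-2}(r_0/|z|)^{2cM}$. The same bound holds for $a+b$, because by construction (Section 3) the traffic moments, and hence the Schatten norms, of $a$ and $b$ are those of $A,B$, which are controlled via Assumption \ref{H}. The condition $|z|^2>4C$ makes the geometric series converge, and this is where $r_0$ enters.

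\textbf{Heads.} For $k\le K$, expand $(A+B)^k$ into words in $A,B$. Writing $\Delta$ of each word as a sum over graph monomials (traffic expansion), we decompose $\Delta(w(A,B))$ by summing over partitions of vertices, i.e.\ quotient graphs with injective labelings. Permutation invariance of one matrix, together with independence, says that the injective traffic moments factorize over colored components. The amalgamated free model $a+b$ reproduces exactly the terms whose graph of colored components is a tree; by the construction in Section 3 this is the defining property of $a+b$. Hence
\[
\E\tfrac1N\big\|\Delta(w(A,B))-E(w(a,b))\big\|_F^2
\]
is a sum over graphs (built from two copies of the word, for the square) whose colored-component graph contains a cycle. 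Each such cycle costs a factor $N^{-1}$, while Assumption \ref{H} bounds the number of injective embeddings of graphs with at most $2k\le 2M$ edges by $k^{(h_1+h_2)k}$-type combinatorial factors. Since $M\lesssim \log N/\log\log N$, we have $k^{(h_1+h_2+6)k}N^{-1}\le c^{2k}$-type bounds for $k\le cM$ with $c\le 1/(h_1+h_2+6)$, where the extra $6$ absorbs counting of partitions and of words. Summing $\sum_{k\le K}|z|^{-k-1}\times(\cdot)$ and using $|z|^2>2c^2$ yields a geometric series dominated by the same $(r_0/|z|)^{2cM}|z|^{-2}$.

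\textbf{Main obstacle.} The hard step is the combinatorial estimate in the head. We must show that the non-tree contributions are bounded by $N^{-1}$ times a factor that grows at most like $k^{O(k)}$ uniformly for $k$ up to $cM$, and then convert $N^{-1}$ into $(r_0/|z|)^{2cM}$ via $M\le \log N/\log\log N$. We would first try to bound the number of quotient graphs with a cycle in the colored-component structure by $(2k)^{6k}$. We would then apply Assumption \ref{H} to each colored component separately, using the factorization given by permutation invariance.
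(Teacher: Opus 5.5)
Your head estimate is the paper's argument. Via Corollary \ref{cor : comparaison moment a+b et A+B}, the moment differences become sums of injective traces of graph monomials with non-tree $\GCC$, each of order $N^{\eta}$ with $\eta\le -1$. Three precisions are needed:
\begin{itemize}
\item The moments of $a+b$ are not ``exactly the tree terms'' of $A+B$. Lemma \ref{lemma : trace injective a et b} adds the amalgamation terms $\sigma\neq 1$, which are non-tree and belong to the error.
\item After squaring, it is Lemma \ref{GCC pas arbre} that keeps $\eta\le -1$ for the glued graphs.
\item Permutation invariance factorizes the injective trace between the $a$-part $g_{a}$ and the $b$-part $g_{b}$ (Lemma \ref{lemma : injective trace A et B}), not over individual colored components. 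Assumption \ref{H} must therefore be applied to $g_{a}$ and $g_{b}$ as multi-component test graphs, which is why it is stated that way.
\end{itemize}

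The genuine gaps are in the remainders. First, nothing in your argument removes the resolvent from $z^{-K}\Delta\bigl((A+B)^{K}(z-A-B)^{-1}\bigr)$. H\"older or Cauchy--Schwarz in Schatten norms only moves $(z-A-B)^{-1}$ into another Schatten norm. Assumption \ref{assumption frobenius} gives no control of $\|A+B\|_{op}$ (that is its point, cf.\ Erdos--Renyi graphs with $Np\ll\log N$). So $|z|>r_{0}$ does not keep $z$ away from the spectrum, and the only available bound is $\|(z-A-B)^{-1}\|_{op}\le 1/\Im z$. For the same reason, $|z|^{2}>4C$ does not make any Neumann series converge; it only makes $(4C/|z|^{2})^{cM}$ small. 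The paper does exactly the H\"older--Minkowski step you sketch: $\|R^{M}_{A+B}(z)\|_{F}\le|z|^{-cM}\|(A+B)^{cM}\|_{F}\|(z-A-B)^{-1}\|_{op}$, with $\|(A+B)^{cM}\|_{F}^{2}\le 2^{2cM-1}(\|A^{cM}\|_{F}^{2}+\|B^{cM}\|_{F}^{2})$. Its remainder bounds \eqref{eq : borne reste A+B} and \eqref{eq : bound reste a+b} carry the factor $|\Im z|^{-2}$. Your $\Im z$-free term $|z|^{-2}(4C/|z|^{2})^{K}$ is asserted, not derived.

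Second, the remainder for $a+b$ is a missing step, not a consequence of the construction. It involves the mixed power $(a+b)^{cM}$, and knowing that the marginal moments of $a$ and $b$ equal those of $A$ and $B$ does not control it by itself. Moreover, ``Schatten norms'' of $a,b$ on the infinite-dimensional $\H_{N}$ mean nothing without a trace. Assumption \ref{H} only gives $N^{ch}$-type bounds, never the geometric factor $(4C)^{cM}$, which must come from Assumption \ref{assumption frobenius}.

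The paper's route is as follows. Set $x=(z-a-b)^{-1}(a+b)^{cM}$.
\begin{itemize}
\item The inequality $|\langle x\delta_{v},\delta_{v}\rangle|^{2}\le\langle xx^{*}\delta_{v},\delta_{v}\rangle$ gives $\frac{1}{N}\|R^{M}_{a+b}(z)\|_{F}^{2}\le|z|^{-2cM}\varphi(xx^{*})$.
\item The spectral measure of $a+b$ under $\varphi$ gives $\varphi(xx^{*})\le|\Im z|^{-2}\varphi((a+b)^{2cM})$.
\item Corollary \ref{cor : comparaison moment a+b et A+B} replaces $\varphi((a+b)^{2cM})$ by $\frac{1}{N}\mathrm{Tr}(A+B)^{2cM}$, up to an error of order $(2c^{2})^{cM}N^{c(h_{1}+h_{2}+2)-1}$. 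This, not the head, is where $2c^{2}$ enters $r_{0}$.
\item Assumption \ref{assumption frobenius} then applies.
\end{itemize}

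Your intuition can also be made rigorous another way. The state $\varphi=\frac{1}{N}\mathrm{Tr}\circ E_{\D_{N}(\C)}$ restricts to the normalized trace of $\M_{N}(\C)$ on the algebras generated by $\D_{N}(\C)$ and $a$ (resp.\ $b$). Hence it is tracial on the algebra generated by $a$, $b$ and $\D_{N}(\C)$, since amalgamated free products of trace-compatible conditional expectations are tracial. The noncommutative Minkowski inequality in $L^{2cM}(\varphi)$ then gives $\varphi((a+b)^{2cM})\le 2^{2cM-1}\bigl(\frac{1}{N}\mathrm{Tr}A^{2cM}+\frac{1}{N}\mathrm{Tr}B^{2cM}\bigr)$ directly, without the moment comparison and without Assumption \ref{H}. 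But the traciality of $\varphi$ and this Minkowski step are exactly what your sketch leaves out.
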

It is specified in the Assumptions that the speed $M$ cannot be greater than $\log N /\log\log N$. Therefore, the best speed one can achieve is of order $N^{\gamma/\log\log N}$.

The next theorem is more restrictive on the assumptions but provides a better speed of convergence and controls the difference of the operator-valued Cauchy transforms not only on scalar matrices.
\begin{theorem}\label{thm B}
    Let $A$ and $B$ be two Hermitian random matrices, one of them being permutation invariant. Assume that there exists $C>0$ independent of $N$ such that they are
    \begin{enumerate}
        \item Uniformly bounded in operator norm: $||A||_{op},||B||_{op}\leq C$,
        \item Sparse: $\forall 1\leq i \leq N, \sum_{j=1}^{N}\1_{A_{i,j}\neq 0}+\1_{B_{i,j}\neq 0}\leq C$.
    \end{enumerate}
    Let $\eta_{0}>||A||_{op}+||B||_{op}$ and $D\in \D_{N}(\C)$ such that $\min\{\Im(D_{i,i})\}>\eta_{0}$, and denote $d:=\min\{|D_{i,i}|,\,i\in[N]\}$, we have
    \begin{equation}
        \E\frac{1}{N}||G_{A+B}(D)-G_{a+b}(D)||_{F}^{2}\leq \frac{6}{d^{2}}N^{\frac{2\log2C/d}{1+2\log C}}.
    \end{equation}
\end{theorem}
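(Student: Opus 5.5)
We will prove Theorem \ref{thm B} by expanding both resolvents in Neumann series around $D$ and splitting the series at a level $K$, which we choose as a function of $N$ at the end. Since $\min\Im(D_{i,i})>\eta_0>\|A\|_{op}+\|B\|_{op}$, we have $\|D^{-1}\|_{op}\le 1/d$ and $\|D^{-1}(A+B)\|_{op}\le (\|A\|_{op}+\|B\|_{op})/d<1$. The operator $a+b$ built in Section 3 has norm at most $\|A\|_{op}+\|B\|_{op}$ as well. Hence both expansions
\[
G_{A+B}(D)=\sum_{k\ge0}\Delta\bigl(D^{-1}((A+B)D^{-1})^{k}\bigr),\qquad G_{a+b}(D)=\sum_{k\ge0}E\bigl(D^{-1}((a+b)D^{-1})^{k}\bigr)
\]
converge. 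We write $G_{A+B}(D)-G_{a+b}(D)=\sum_{k\le K}\delta_k+R_K$, where $\delta_k$ is the difference of the $k$-th terms. The tails satisfy $\|R_K\|_{op}\le \frac{2}{d}\sum_{k>K}(2C/d)^k$. We will use $\|\cdot\|_F^2\le N\|\cdot\|_{op}^2$ for diagonal matrices, together with $(x+y)^2\le 2x^2+2y^2$ (this is where the constant $6$ will come from). The normalized tail contribution is then of order $d^{-2}(2C/d)^{2K}$, provided $2C<d$. We may assume $2C<d$, since otherwise the right-hand side is at least $6/d^2$ and the statement is trivial.

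For the head, we expand each $((A+B)D^{-1})^k$ into words in $A D^{-1}$ and $B D^{-1}$. Since $D$ is diagonal, $D^{-1}$ can be absorbed as the diagonal weights $b_i$ of monomials $b_0 X_1 b_1\cdots X_k b_k$ in $\D_N\langle\X,\Y\rangle$. The diagonal entry $(i,i)$ of such a word, for $A+B$, is a sum over closed walks of length $k$ from $i$ in the coloured graph of $A$ and $B$. For $a+b$, it is the same sum restricted to walks whose graph of coloured components (Section 4.1) is a tree. This is the explicit content of the construction in Section 3: the free product over the diagonal is the "tree-like unfolding" of the coloured graph. Thus $(\delta_k)_{i,i}$ is bounded by the total weight of closed walks of length $k$ from $i$ whose coloured-component graph contains a cycle. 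Each step carries weight at most $C$ (entries are bounded by $\|\cdot\|_{op}\le C$) and each diagonal factor at most $1/d$. We then bound $|(\delta_k)_{i,i}|\le d^{-1}(C/d)^k\,\#\{\text{bad walks of length }k\text{ at }i\}$.

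The main obstacle is to show that bad walks are rare on average, and this is where permutation invariance of (say) $A$ enters. Conditionally on $B$, conjugating $A$ by a uniform permutation makes the $A$-graph a uniformly relabelled sparse graph of degree $\le C$. The ball of radius $k$ around $i$ in the union graph has at most $(2C)^k$ vertices. A cycle in the coloured-component graph requires that an $A$-component and a $B$-component meet twice within this ball. By a union bound over pairs of vertices in the ball, each such coincidence has probability $O(1/N)$ under the random relabelling. So $\P(\text{ball at } i \text{ is not tree-like})\le (2C)^{2k}/N$ up to constants. The number of walks of length $k$ is at most $C^k$. Squaring and averaging over $i$, using Cauchy--Schwarz over $k\le K$, we expect
\[
\E\frac1N\Bigl\|\sum_{k\le K}\delta_k\Bigr\|_F^2\lesssim \frac{1}{d^2}\,\frac{(2C)^{cK}}{N}
\]
for some explicit exponent, which gives the $1+2\log C$ in the denominator after taking logarithms. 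The delicate points are making the $O(1/N)$ collision estimate rigorous for walks revisiting vertices, and keeping track of exact constants.

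Finally, we optimise by choosing $K=\log N/(1+2\log C)$, so that the head bound $N^{-1}(2C)^{\dots K}$ and the tail bound $(2C/d)^{2K}=N^{2\log(2C/d)/(1+2\log C)}$ balance. Summing the two contributions with the factor-$2$ splits yields the stated bound $\frac{6}{d^2}N^{\frac{2\log 2C/d}{1+2\log C}}$.
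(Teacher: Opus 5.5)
The overall approach is the same as the paper's:
\begin{itemize}
\item a Neumann expansion of both resolvents, cut at $K=\kappa\log N$;
\item the tail controlled through $\|a\|_{op}=\|A\|_{op}$ and $\|b\|_{op}=\|B\|_{op}$;
\item the head supported on vertices whose neighbourhood has a non-tree $\GCC$ (the complement of $F_N(n)$);
\item a permutation-invariance estimate $\P(i\notin F_N(n))\lesssim c^{n}/N$ (Lemma~\ref{Markov bound}, with $c=eC^{2}$).
\end{itemize}
The gap is the final balancing step, and it is not a bookkeeping issue. Your head estimate has the form $d^{-2}X^{K}/N$ with $X$ depending only on $C$. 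With $K=\log N/(1+2\log C)$ it becomes $d^{-2}N^{\log X/(1+2\log C)-1}$. That exponent does not involve $d$ and is never below $-1$, so it cannot balance the tail $N^{2\log(2C/d)/(1+2\log C)}$. With the paper's value $X=eC^{2}$ it is $d^{-2}N^{0}$, i.e.\ no decay at all. The paper reaches the stated exponent only by multiplying the contribution of each bad vertex by $\bigl((\|A\|_{op}+\|B\|_{op})/d\bigr)^{2\kappa\log N}$, so that $\kappa=1/(1+2\log C)$ exactly cancels $N^{\kappa\log(eC^{2})-1}$. That factor is not justified. If $A_{ij}B_{ji}\neq 0$ for some $j\neq i$, then $R(i)=0$ and the moments at $i$ already differ at order $2$: the walk $i\xrightarrow{a}j\xrightarrow{b}i$ closes in $A+B$ but not in $a+b$. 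So the discrepancy at $i$ is of order $d^{-3}$, not $O\bigl((2C/d)^{2\kappa\log N}\bigr)$.

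In fact, for $d$ large compared to $C$ the stated bound is false, so no argument can close this step. Take ($N$ even) $A,B$ the adjacency matrices of two independent uniform perfect matchings, so $C=2$ and $\|A\|_{op}=\|B\|_{op}=1$, and take $D=zI$ with $z=iy$.
\begin{itemize}
\item The root component of $a+b$ is a bi-infinite path with alternating colours, so $G_{a+b}(D)_{i,i}=(z^{2}-4)^{-1/2}$.
\item Each edge shared by the two matchings (about $1/2$ of them on average) gives two vertices with $G_{A+B}(D)_{i,i}=z/(z^{2}-4)$, a discrepancy $2z^{-3}+O(z^{-5})$.
\item Hence the left-hand side is $\gtrsim y^{-6}N^{-1}$, while the right-hand side is $\frac{6}{y^{2}}N^{2\log(4/y)/(1+2\log 2)}$, e.g.\ $O(N^{-2.69})$ for $y=100$.
\end{itemize}
More generally the claimed exponent is below $-1$ as soon as $d>2\sqrt{e}\,C^{2}$, whereas a left-hand side of order $N^{-1}$ is generic: one $A$-edge coinciding with a $B$-edge suffices.

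What your method does prove comes from splitting the bad vertices according to $r=R(i)$. The moments at $i$ agree up to order $2r$, so the discrepancy there is $O\bigl(d^{-1}(2C/d)^{2r+1}\bigr)$. Summing this against $\P(R(i)\le r)\lesssim(eC^{2})^{r+1}/N$ gives a valid rate, but one that saturates at $N^{-1}$.

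Two smaller points:
\begin{itemize}
\item Relabelling $A$ uniformly conditionally on $B$ needs $A$ and $B$ independent. The statement omits this (for $B=A$ the claim is false), and the paper also uses it tacitly.
\item The $a+b$ moments are not the sum over walks with tree $\GCC$. They are the sum over closed walks of $A+B$ whose lift to the graph of Section 3 closes. For example, $i\xrightarrow{a}j\xrightarrow{b}i\xrightarrow{b}j\xrightarrow{a}i$ has a non-tree $\GCC$ yet lifts to a closed walk. Your bound on $(\delta_k)_{i,i}$ still holds, because every tree-$\GCC$ walk does lift.
\end{itemize}
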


The proof of both theorems rely on a moment method: we compare the moments of $A+B$ to the moments of the operator $a+b$ constructed in Section 3.

\section{Explicit construction of the amalgamated free sum}
    We give a constructive definition of the amalgamated free sum of $A$ and $B$ as the adjacency operator of a locally finite rooted weighted graph. This construction is inspired from \cite{timhadjelt_spectral_2025} and from the methods developed in \cite{huang_spectrum_2024,erdos_spectral_2013}.
Let $N\in \N $, for $x\in[N]:=\{1,\cdots,N\}$, we define
\begin{align*}
    V_{N}^{x}&:=\{x\}\sqcup\bigsqcup_{n=1}^{\infty} V_{N}^{x}(n,a)\sqcup  V_{N}^{x}(n,b), \text{ where}\\
    V_{N}^{x}(n,a) &:=\left\{(j_{1},\cdots,j_{n},a)\in [N]^{n}\times\{a\}| \,x\neq j_{1},\,j_{1}\neq j_{2},\cdots,\,j_{n-1}\neq j_{n}\right\},\\
    V_{N}^{x}(n,b) &:=\left\{(j_{1},\cdots,j_{n},b)\in [N]^{n}\times\{b\}| \,x\neq j_{1},\,j_{1}\neq j_{2},\cdots,\,j_{n-1}\neq j_{n}\right\}.
\end{align*}
Let $\H_{N}^{x}:=\ell^{2}(V_{N}^{x})$ be the set of sequences $(y_{v})_{v\in V_{N}^{x}}$ such that $\sum_{v\in V_{N}^{x}}|y_{v}|^{2}<+\infty$. Since $(\delta_{v})_{v\in V_{N}^{x}}$ is a basis of $\H_{N}^{x}$ (where $\delta_{v}$ denotes the sequence with 0 everywhere except at $v$), we often identify elements of this basis of $\H_{N}^{x}$ with elements of $V_{N}^{x}$. We let $V_{N}^{x}(0):=\{x\}$ for easier notations.

Note that for $x,y\in[N]$, there is a natural bijection between $V_{N}^{x}$ and $V_{N}^{y}$ where one sends a tuple $(j_{1},\dots,j_{n},a)$ to $(j_{1}',\cdots,j_{n}',a)$ with $j_{i}'=j_{i}$ if $j_{i}\notin\{x,y\}$, $j_{i}'=x$ if $j_{i}=y$ and $j'_{i}=y$ if $j_{i}=x$ and similarly for tuple ending in $b$. This bijection naturally defines an isomorphism $\Psi_{x\rightarrow y}:\H_{N}^{x}\rightarrow\H_{N}^{y}$ when acting on elements of the basis, we thus often write $\H_{N}$ instead of $\H_{N}^{x}$.

We build a family of maps, for $x\in [N]$,
\begin{equation}
    \boxplus_{\Delta}^{x}:\M_{N}(\C)\times\M_{N}(\C)\rightarrow B(\H_{N}^{x}),
\end{equation}
that verify the following property: $\forall x,y\in[N], (.\boxplus_{\Delta}^{x}.)=\Psi_{y\rightarrow x}(.\boxplus_{\Delta}^{y}.)\Psi_{x\rightarrow y}$. Hence, one just needs to construct one of those maps and we will call it simply $\boxplus_{\Delta}$.

The idea is the following, for each matrix $A$ and $B$, we construct $A\boxplus_{\Delta}B$ as follows. Viewing $A$ and $B$ as weighted graphs on the set of vertices $[N]$, we attach to each vertex $v$ of $A$, a copy of the graph $B$ by the same vertex $v$ of $B$. Then, to each new vertex created (hence vertices of one of the copies of the graph of $B$), we attach a new copy of $A$ to the corresponding vertex. Repeating this process infinitely many times, we obtain an infinite weighted locally finite graph. Its adjacency operator is in $B(\H_{N})$. 
\begin{remark}
    Starting from a different vertex $y$ leads to exactly the same graph after renaming the set of vertices as above 
\end{remark}
Figure \ref{fig: somme libre} represents the first steps of this construction when the root is the vertex $2$.

\begin{figure}[h]
    \centering
    % Ligne du haut : deux sous-figures côte à côte
    \begin{subfigure}[t]{0.3\textwidth}
        \centering
        \includegraphics[width=\textwidth]{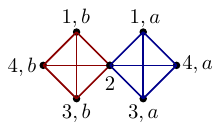}
        \caption{First step}
    \end{subfigure}
    \hfill
    \begin{subfigure}[t]{0.4\textwidth}
        \centering
        \includegraphics[width=\textwidth]{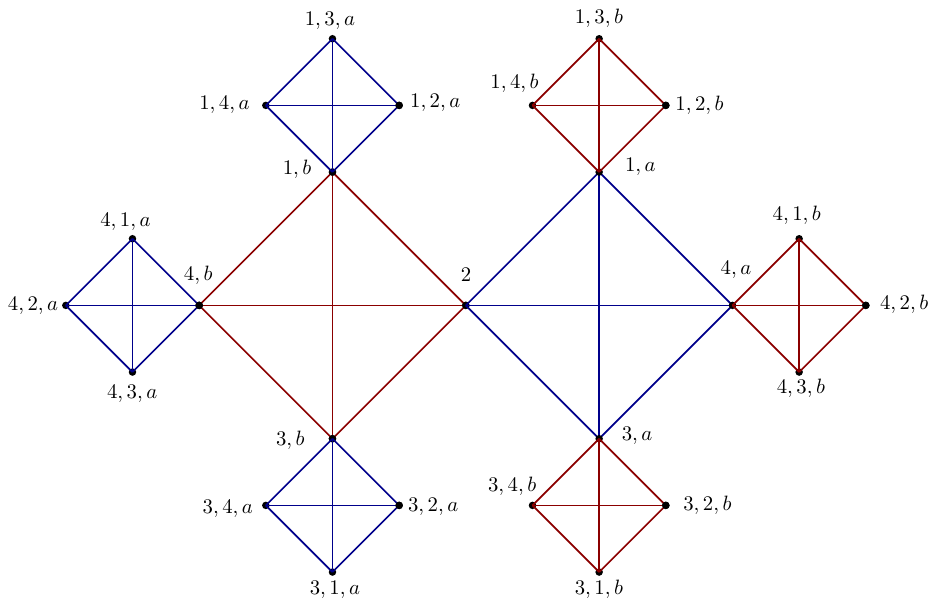}
        \caption{Second step}
    \end{subfigure}
    
    % Ligne du bas : une seule sous-figure
    \vskip\baselineskip
    \begin{subfigure}[t]{0.5\textwidth}
        \centering
        \includegraphics[width=\textwidth]{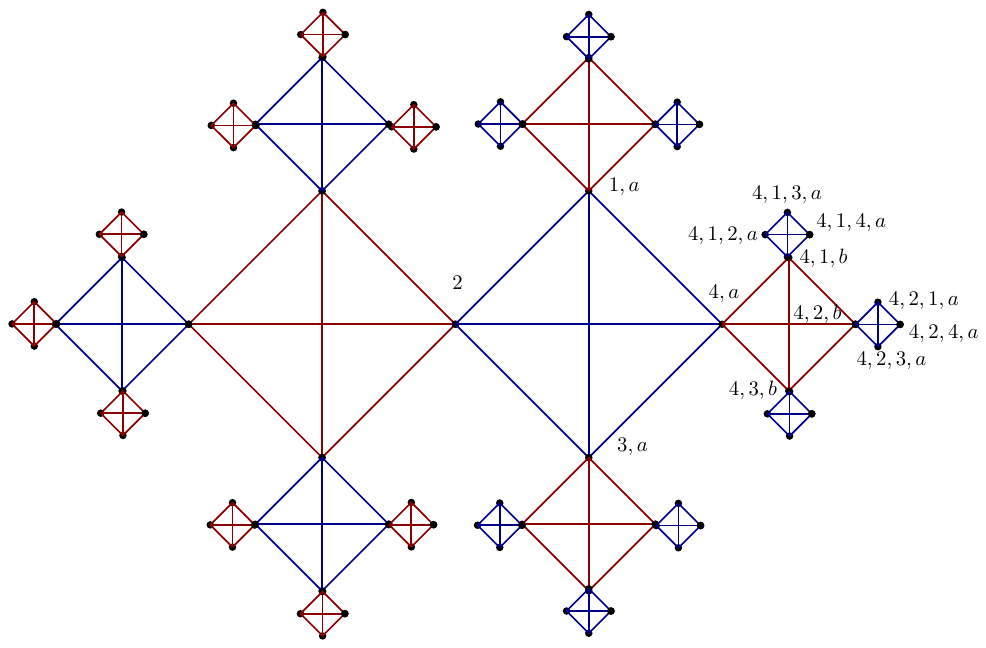}
        \caption{Third step}
    \end{subfigure}
    
    \caption{Construction of $A\boxplus_{\Delta}B$ as an iterating process. The blue (resp. red) edges are labeled $a$ (resp. $b$). }
    \label{fig: somme libre}
\end{figure}

It should be noted that this graph has a tree-like structure when looking at the components that come from $A$ and the components that come from $B$. Formally, its graph of colored components with respect to the families $A$ and $B$ is an infinite tree (see Section 4).

We now explicitly construct operators $a,b\in B(\H_{N}^{x})$ associated to $A$ and $B$ such that $A\boxplus_{\Delta}B=a+b$. Then, we show that they are free over the algebra of diagonal operators and even above the algebra of diagonal matrices in a sense to be made precise below. Therefore, we define the amalgamated free sum of $A$ and $B$ as the sum of $a$ and $b$ (where the sum is now taken with respect to the natural sum operation on bounded linear operators).

In order to facilitate the definition of $a$ and $b$, we use the  convention $V_{N}(0,a)=V_{N}(0,b)=V_{N}(0)=\{x\}$ and when $n=0$, $(j_{1},\dots,j_{n},a)=(j_{1},\dots,j_{n},b)=x$. With this, one has the following decompositions of the set $V_{N}^{x}$ as
\begin{equation}\label{decomposition sommets}
    \begin{split}
        V_{N}^{x}&=\bigsqcup_{n=0}^{\infty}\bigsqcup_{v\in V_{N}(n,b)}R_{a}(v)=\bigsqcup_{n=0}^{\infty}\bigsqcup_{w\in V_{N}(n,a)}R_{b}(w),
    \end{split}
\end{equation}
where, for $n\geq0$, $v=(j_{1},\cdots,j_{n},b)\in V_{N}(n,b)$ and $w=(j_{1},\cdots,j_{n},a)\in V_{N}(n,a)$, we denote
\begin{equation}
    \begin{split}
        R_{a}(v)&:=\{v\}\sqcup\{(j_{1},\cdots,j_{n},j_{n+1},a),\,j_{n+1}\in[N]\setminus\{j_{n}\}\},\\
        R_{b}(w)&:=\{w\}\sqcup\{(j_{1},\cdots,j_{n},j_{n+1},b),\,j_{n+1}\in[N]\setminus\{j_{n}\}\}.
    \end{split}
\end{equation}
Looking at Figure \ref{fig: somme libre}, it is easy to understand this decomposition. One groups vertices that belong to a same colored component: blue for $R_{a}$ and red for $R_{b}$. We call the sets $R_{a}(v),R_{b}(w)$ components in $a,b$ respectively.

With this decomposition, we define the operators $a$ and $b$ as follows. 

Let $n\geq 0$ and $v=(j_{1},\cdots,j_{n},b)\in V_{N}(n,b)$. Let $v_{1},v_{2}\in R_{a}(v)$, we denote $j(v_{1})$ and $j(v_{2})$ the last index of $v_{1}$ and $v_{2}$ respectively (it might be $j_{n}$ or some $j_{n+1}\neq j_{n}$) and we set $\langle a\delta_{v_{1}},\delta_{v_{2}}\rangle = A(j(v_{2}),j(v_{1}))$. 

Similarly, let $n\geq0$ and $w=(j_{1},\cdots,j_{n},a)\in V_{N}(n,a)$. Let $w_{1},w_{2}\in R_{b}(w)$, we denote $j(w_{1})$ and $j(w_{2})$ the last index of $w_{1}$ and $w_{2}$ respectively (it might be $j_{n}$ or some $j_{n+1}\neq j_{n}$) and we set $\langle b\delta_{w_{1}},\delta_{w_{2}}\rangle = B(j(w_{2}),j(w_{1}))$.

Let $n\geq 0$, let $v\in V_{N}(n,b)$ for any $v_{1}\in R_{a}(v)$, we define 
\begin{equation}
    a:\delta_{v_{1}}\mapsto\sum_{v_{2}\in R_{a}(v)}\langle a\delta_{v_{1}},\delta_{v_{2}}\rangle\delta_{v_{2}}.
\end{equation}
Similarly, let $n\geq 0$, let $w\in V_{N}(n,a)$ for any $w_{1}\in R_{b}(w)$, we define 
\begin{equation}
    b:\delta_{w_{1}}\mapsto\sum_{w_{2}\in R_{b}(v)}\langle b\delta_{w_{1}},\delta_{w_{2}}\rangle\delta_{w_{2}}.
\end{equation} 

Notice that $a$ and $b$ respect the decomposition given by \eqref{decomposition sommets} in the sense that any vertex $v$ is sent by $a$, resp. $b$, to vertices in its components in $a$ resp. $b$.

The matrices $A$ and $B$ being Hermitian, it is easy to see that $a$ and $b$ are bounded self-adjoint operators with same spectral norm than $A$ and $B$ respectively.

Recall that for $T\in B(\H_{N}^{x})$, we define the diagonal of $T$ as 
    \begin{equation}
        \Delta(T):\delta_{v}\mapsto \langle T(\delta_{v}),\delta_{v}\rangle \delta_{v},
    \end{equation}
and note that $\Delta(B(\H_{N}^{x}))=:\D$ is a closed subalgebra of $B(\H_{N}^{x})$ for the norm topology.

Let $\A=\mathrm{cl\ span}\{a,b,\D\}$ where $\mathrm{cl}$ is the closure in $B(\H_{N}^{x})$ for the norm topology. The triplet  $(\A,\D, \Delta)$ is a $C^{*}$-operator-valued non commutative probability space.

\begin{proposition}
    The operators $a$ and $b$ are $\Delta$-free in $(\A,\D, \Delta)$.
\end{proposition}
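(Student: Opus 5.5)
Here $\D$ is the algebra of all diagonal bounded operators on $\H_{N}^{x}$, i.e.\ multiplication operators by bounded functions on $V_{N}^{x}$. The plan is to work with matrix coefficients $\langle T\delta_{u},\delta_{u'}\rangle$ and to exploit the tree structure of the colored components given by \eqref{decomposition sommets}.

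\textbf{Step 1: block structure of the generated algebras.} Let $\A_{1}$ be the algebra generated by $a$ and $\D$. Since $a$ preserves each component $R_{a}(v)$, and every diagonal operator preserves every singleton, every element $P$ of $\A_{1}$ (and of its norm closure) is block diagonal with respect to the partition $\{R_{a}(v)\}$. That is, $\langle P\delta_{u},\delta_{u'}\rangle=0$ unless $u,u'$ lie in the same $a$-component. Consequently $P-\Delta(P)$ has only off-diagonal entries, and these are supported on pairs $u\neq u'$ in a common $a$-component. The same holds for $\A_{2}$ and the $b$-components.

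\textbf{Step 2: structure of the adjacency graph.} Take centered alternating elements $P_{1},\dots,P_{n}$ with $P_{i}\in\A_{j_{i}}$ and $\Delta(P_{i})=0$. Expand
\[
\langle P_{1}\cdots P_{n}\delta_{u},\delta_{u}\rangle=\sum\langle P_{n}\delta_{u_{0}},\delta_{u_{1}}\rangle\cdots\langle P_{1}\delta_{u_{n-1}},\delta_{u_{n}}\rangle
\]
with $u_{0}=u_{n}=u$. This sum converges absolutely because each $P_{i}$ is a bounded operator: one can write it as an iterated inner product, so no extra summability argument is needed. A nonzero term requires each step $u_{k-1}\to u_{k}$ to satisfy $u_{k}\neq u_{k-1}$ and to stay inside a single component of the color $j_{n+1-k}$. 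The colors alternate along the path.

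The key combinatorial fact is about the bipartite graph whose vertices are the elements of $V_{N}^{x}$ together with all components, with an edge joining a vertex to each component containing it. I claim this graph is a tree. Each vertex $(j_{1},\dots,j_{n},a)$ belongs to exactly one $a$-component and one $b$-component. The component of the opposite color from the last letter, namely $R_{b}(w)$ for a vertex $w$ ending in $a$, is the one where it is the "root" $w$. The other component is its parent, obtained by truncating the last index. Hence the depth, measured by $n$, strictly decreases when moving to the parent component, which gives acyclicity.

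\textbf{Step 3: no alternating closed walk.} A closed walk $u_{0},\dots,u_{n}=u_{0}$ with consecutive distinct vertices and alternating colors corresponds to a closed walk in this tree, namely $u_{0},C_{1},u_{1},C_{2},\dots$ with $C_{k}$ the component used at step $k$. Since colors alternate, $C_{k}\neq C_{k+1}$. Also $u_{k-1}\neq u_{k}$, so the walk never immediately backtracks through a component: $C_{k}$ is entered at $u_{k-1}$ and left at $u_{k}\neq u_{k-1}$. At a vertex $u_{k}$, the walk arrives from $C_{k}$ and departs to $C_{k+1}\neq C_{k}$. So the walk is non-backtracking in the tree. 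A non-backtracking closed walk of positive length cannot exist in a tree, so every term vanishes. This gives $\Delta(P_{1}\cdots P_{n})=0$, which is freeness. For $n=1$ the claim is just $\Delta(P_{1})=0$.

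\textbf{Main obstacle.} The delicate point is Step 1 for the norm closure and for $*$-algebra generation. The block-diagonal property is preserved under products, adjoints and norm limits, since each matrix coefficient is continuous in operator norm, so this goes through.

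A second delicate point is checking the tree claim carefully, in particular at the root $x$. The root belongs to $R_{a}(x)$ and $R_{b}(x)$, and both are "rooted" at $x$. The depth argument handles it as the unique vertex of depth zero with no parent.
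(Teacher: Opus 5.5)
Your proof is correct and follows the same idea as the paper. You expand the diagonal coefficient of a centered alternating product along walks, note that centeredness forbids staying at the same vertex while alternation forbids reusing a component, and use the tree structure of the colored components to show no such walk can return to its starting vertex. The paper carries this out by an explicit case analysis on the depth of the vertex: a first downward step forces all later steps downward, sideways steps lead into other branches, and a parity argument handles passing through the root $x$. You instead package it as non-backtracking walks in the vertex--component incidence tree, which is cleaner and is exactly the heuristic stated in the remark after the paper's proof.

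One small imprecision: boundedness of the $P_i$ alone does not give absolute convergence of the multi-index path sum. Your Step 1 settles this anyway, since each component has exactly $N$ elements, so the path sum is finite.
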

\begin{proof}
    We denote $X_{1}$ (resp. $X_{2}$) the subalgebra of $\A$ generated by $a$ and $\D$ (resp. $b$ and  $\D$). Note that this is not only the vector space spanned by monomials of the form $(ad)^{k}$ for $d\in \D$ but since $a$ and $d$ do not commute in general, it is spanned by monomials of the form $d_{0}ad_{1}\cdots ad_{k}$, for $d_{0},d_{1},\cdots,d_{k}\in\D$.

    Let $m\geq 2$ and $x_{1},\cdots,x_{m}$ be such that $x_{2i-1}\in X_{1}$ and $x_{2i}\in X_{2}$ for all $1\leq i\leq \lfloor m/2\rfloor $ and such that for all $1\leq i \leq m$, $\Delta(x_{i})=0$. We want to show that $\Delta(x_{1}\cdots x_{m})=0$. Let $v\in V_{N}^{x}$, we want to compute $\langle x_{1}\cdots x_{m}(\delta_{v}),\delta_{v}\rangle$. First assume that $x_{m}\in X_{1}$ and $v\in V_{N}(n,b)$ for some $n\geq 0$. Since $x_{m}$ is centered, we have
    \begin{equation}
        x_{m}(\delta_{v})=\sum_{v'\in R_{a}(v)\setminus\{v\}}\langle x_{m}\delta_{v},\delta_{v'}\rangle\delta_{v'}.
    \end{equation}
    Since $R_{a}(v)\setminus\{v\}\subset V_{N}(n+1,a)$, by induction, we have that $x_{1}\cdots x_{m}(\delta_{v})\subset V_{N}(n+m,a)\cup V_{N}(n+m,b)$, hence $\Delta(x_{1}\cdots x_{m})=0$. The same reasoning applies when $x_{m}\in X_{2}$ and $v\in V_{N}(n,a)$ for some $n\geq 0$.

    Let us now assume that  $x_{m}\in X_{1}$ and $v=(j_{1},\cdots,j_{n},a)\in V_{N}(n,a)$ for some $n\geq 0$. To simplify notations, for any $k\geq 0$, we define by convention $V_{N}(-k,a)=V_{N}(k,a)$ and similarly for $b$. The vertex $v_{m-1}=(j_{1},\cdots,j_{n-1},b)\in V_{N}(n-1,b)$ is such that $v\in R_{a}(v_{m-1})$, thus
    \begin{equation*}
        x_{m}(\delta_{v})=\langle x_{m}\delta_{v},\delta_{v_{m-1}}\rangle\delta_{v_{m-1}}+\sum_{w\in R_{a}(v_{m-1})\setminus\{v,v_{m-1}\}}\langle x_{m}\delta_{v},\delta_{w}\rangle\delta_{w}.
    \end{equation*}
    For any $w\in R_{a}(v_{m-1})\setminus\{v,v_{m-1}\}$,  $w=(j_{1},\cdots,j_{m-1},j,a)$ for some $j \neq j_{m}$. Hence $x_{1}\cdots x_{m-1}(\delta_{w})$ is supported by vertices of the form $(j_{1},\dots,j_{n-1},j,i_{1},\cdots,i_{m-1},\varepsilon)$ for $\varepsilon\in\{a,b\}$ which can never be equal to $v$ since $j\neq j_{m}$. We can therefore disregard the contribution of these terms.

    We repeat this process for $v_{m-1}$: the vertex $v_{m-2}=(j_{1},\dots,j_{n-2},a)\in V_{N}(n-2,a)$ is such that $v_{m-1}\in R_{b}(v_{m-2})$. Hence
    \begin{equation*}
        x_{m-1}(\delta_{v_{m-1}})=\langle x_{m-1}\delta_{v_{m-1}},\delta_{v_{m-2}}\rangle\delta_{v_{m-2}}+\sum_{w\in R_{b}(v_{m-2})\setminus\{v_{m-1},v_{m-2}\}}\langle x_{n}\delta_{v},\delta_{w}\rangle\delta_{w}.
    \end{equation*}
    Any $w\in R_{b}(v_{m-2})\setminus\{v_{m-1},v_{m-2}\}$ is of the form $w=(j_{1},\cdots, j_{n-2},j,b)$ for some $j\neq j_{n-1}$, hence $x_{1}\cdots x_{m-2}(\delta_{w}) $ will be supported by vertices of the form $(j_{1},\cdots,j_{n-2},j,i_{1},\cdots,i_{m-2},\varepsilon)$ for $\varepsilon\in\{a,b\}$. Again, we can disregard the contribution of those terms.

    If $m\leq n$, this reasoning can be applied $m$ times and at each step, $v_{k}\in V_{N}(n-(m-k),\varepsilon)$ for $\varepsilon\in\{a,b\}$, therefore, the product $x_{1}\cdots x_{m}$ is centered. 

    Otherwise, if $m>n $, we apply this reasoning $n$ times until $v_{m-n}=x$. We next apply $x_{m-n}$. Hence $x_{1}\cdots x_{m-n}(\delta_{x})$ is supported on vertices of the form $(i_{1},\dots,i_{m-n},\varepsilon)$  for $\varepsilon\in \{a,b\}$. Hence the only non-trivial case happens when $m-n=n$ which implies that $m$ is even. If $n$ is even, $x_{m-n}\in X_{1}$ otherwise $x_{m-n}\in X_{2}$. Furthermore, if $n$ is even, this implies on one hand $x_{m-n}\in X_{1}$ and on the other hand that $m-n$ is even. Therefore one must have $\varepsilon =b$: indeed we apply the reasoning of the first scenario to see that one takes an even number of steps starting with a step labeled $a$ which will end in a step labeled $b$. Similarly, if $n$ is odd, this implies that $x_{m-n}\in X_{2}$ and $m-n$ is odd which in turns implies $\varepsilon=b$. In both cases the scalar product of $x_{1}\cdots x_{m-n}(\delta_{x})$ with $\delta_{v}$ will lead to a zero contribution.
\end{proof}

\begin{remark}
    Writing the proof and understanding it as it is is quite tedious. However, one should draw the graph associated to $a$ and the one associated to $b$ and see that, alternating between edges from $a$ and edges from $b$ while never staying at the same vertex (the $x_{i}$'s are centered), one can never return to the starting vertex.
\end{remark}

\begin{remark}
    There is a natural embedding of $\D_{N}(\C)$ into $\D$: for $D\in\D_{N}(\C)$, we still denote the associated diagonal operator $D \in\D$ defined by $ D(\delta_{v})=D_{j(v),j(v)}\delta_{v}$, for any $v\in V_{N}$, where $j(v)$ denotes the last index of $v$. We therefore identify $\D_{N}(\C)$ with the image of $\D_{N}(\C)$ through this embedding.
\end{remark}

Defining $E_{\D_{N}(\C)}:\A\rightarrow \D_{N}(\C),\;T\mapsto (\langle T\delta_{(i,a)},\delta_{(i,a)}\rangle)_{1\leq i \leq N}$, it is not hard to see that $(\A,\D_{N}(\C),E_{\D_{N}(\C)})$ is a $C^{*}$-operator-valued probability space

\begin{corollary}\label{free over diagonal matrices}
    The operators $a$ and $b$ are free in the $C^{*}$-operator-valued probability space $(\A,\D_{N}(\C),E_{\D_{N}(\C)})$.
\end{corollary}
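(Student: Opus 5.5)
We deduce the corollary from the preceding Proposition (freeness over $\D$) by the standard principle that freeness with amalgamation over a larger algebra descends to a smaller one, provided the conditional expectations are compatible. Write $\D_N$ for the image of $\D_N(\C)$ in $\D$ under the embedding $D\mapsto(\delta_v\mapsto D_{j(v),j(v)}\delta_v)$. The plan has two steps: first realise $E_{\D_{N}(\C)}$ as a composition $E'\circ\Delta$ with $E':\D\to\D_N$, and then push the alternating centered products through $\Delta$.

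\textbf{Step 1: the factorisation.} Define $E':\D\to\D_N$ by
\[
E'(d)=\big(\langle d\,\delta_{(i,a)},\delta_{(i,a)}\rangle\big)_{1\leq i\leq N},
\]
read as a diagonal matrix and embedded back into $\D$. Since $\Delta$ does not change diagonal coefficients, we get $E_{\D_{N}(\C)}(T)=E'(\Delta(T))$ for every $T\in\A$. Here the notation $(i,a)$ should be read with the convention that the root stands for its own index. The point of Step 1 is that permutation equivariance of the construction (the maps $\Psi_{x\to y}$) makes this choice of representative vertex harmless.

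\textbf{Step 2: alternating centered products.} Let $X_1'$ (resp. $X_2'$) be the algebra generated by $a$ and $\D_N$ (resp. $b$ and $\D_N$). Then $X_j'\subset X_j$, where $X_j$ is the algebra generated by the corresponding operator and $\D$. Take $y_1,\dots,y_m$ alternating in $X_1',X_2'$ with $E_{\D_N(\C)}(y_i)=0$. We must show $E_{\D_N(\C)}(y_1\cdots y_m)=0$.

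\textbf{The main obstacle.} Centering with respect to $E_{\D_N(\C)}$ only says that $\Delta(y_i)$ vanishes at the vertices $(i,a)$. It does not say that $\Delta(y_i)=0$, which is what the Proposition needs. To bridge this I would prove the following claim: for $y\in X_1'$ (or $X_2'$), the diagonal coefficient $\langle y\delta_v,\delta_v\rangle$ depends only on $j(v)$.

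To prove the claim, expand a monomial $d_0 a d_1\cdots a d_k$ on $\delta_v$. Restricted to the component $R_a(v)$ (or $R_b$), the operator $a$ acts as the matrix $A$, indexed by the last coordinates. The elements of $\D_N$ also act through last coordinates only. Hence the diagonal coefficient equals $(D_0AD_1\cdots AD_k)_{j(v),j(v)}$, because $R_a(v)$ is in bijection with $[N]$ via $j$, with $v\mapsto j(v)$. Thus $\Delta(y)=E_{\D_N(\C)}(y)$ under the embedding, so $y_i$ centered for $E_{\D_N(\C)}$ implies $\Delta(y_i)=0$. The one subtle point is that a vertex $v$ lies in both an $a$-component and a $b$-component. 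The computation above only uses the component in the colour of $y$, and in both cases that component is a copy of $[N]$ labelled by last indices.

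\textbf{Conclusion.} By Step 2 the $y_i$ are alternating elements of $X_1,X_2$ with $\Delta(y_i)=0$. The Proposition gives $\Delta(y_1\cdots y_m)=0$. Applying $E'$ and using Step 1, $E_{\D_N(\C)}(y_1\cdots y_m)=0$.
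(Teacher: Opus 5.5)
Your proof is correct and is essentially the paper's argument: the paper also uses $E_{\D_{N}(\C)}=E_{\D_{N}(\C)}\circ\Delta$. It combines this with the observation that, since $a$ and $b$ only see last indices, the $\Delta$-moments $\Delta(D_{0}aD_{1}\cdots aD_{n})$ (and likewise for $b$) already lie in $\D_{N}(\C)$ and coincide with the $E_{\D_{N}(\C)}$-moments. The only difference is that the paper then cites the general descent-of-freeness result (Proposition 16, Chapter 9 of \cite{mingo_free_2017}), whereas you do that step by hand: centering for $E_{\D_{N}(\C)}$ forces $\Delta(y_{i})=0$, so the Proposition applies directly, which makes your version self-contained (your remark about the maps $\Psi_{x\rightarrow y}$ is not actually needed).
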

\begin{proof}
    The algebra $\D_{N}(\C)$ is a subalgebra of $\D(\H_{N})$ and $E_{\D_{N}(\C)}=E_{\D_{N}(\C)}\circ\Delta$. Moreover, for any diagonal matrices $D_{0},\cdots,D_{n}$, the moment $\Delta(D_{0}aD_{1}\cdots aD_{n})$ is actually in $\D_{N}(\C)$ and the same goes when replacing $a$ by $b$. Indeed, the definition of $a$ and $b$ only depends on the last index of the vertex $v$. Hence, the $ \Delta$-moments of $a$ and $b$ are in $\D_{N}(\C)$, therefore, they are equal to their $E_{\D_{N}(\C)}$-moments. Proposition 16, Chapter 9 of \cite{mingo_free_2017} concludes the proof.
\end{proof}

Moreover, let $\varphi:\A\rightarrow\C,\,T\mapsto \frac{1}{N}\sum_{v\in R_{a}(x)}\langle T\delta_{v},\delta_{v}\rangle$, be a unital positive $*$-morphism so that it endows $\A$, $\D$ and $\D_{N}(\C)$ with a structure of $C^{*}$-probability space. Note that in the literature, a $C^{*}$-operator-valued probability space is sometimes required to be equipped with such a positive linear functional $\varphi:\A\rightarrow\C$.

A key result that one may find in \cite{nica_lectures_2006} is that for any self-adjoint element $x$ in a $C^{*}$-probability space $(\A_{0},\varphi_{0})$, there exists a compactly supported real measure $\mu_{x}$ (with support included in the spectrum of $x$) such that for every continuous function $f$ on the spectrum of $x$, $\int f\dd\mu_{x}=\varphi_{0}(f(x))$, where on the right-hand side $f(x)$ is defined by functional calculus and on the left-hand side $\mu_{x}$ is viewed as a measure on the spectrum of $x$. This can then be applied to $(\A,\varphi)$.

\section{Traffic notions and assumptions}
    The last ingredients we need in order to prove Theorems \ref{thm A} and \ref{thm B} come from traffic probability. It has been introduced by Male in \cite{male_traffic_2020}, for the sake of completeness, we restate some of the relevant definitions and results required in this work. 
    \subsection{Traffic notions}

\begin{definition}[Graph of colored components]\label{GCC}
    Let $H$ be a graph with edges labeled by a family $\{x_{i}\}_{i\in I}$ (each $x_{i}$ may itself be a family of several labels as long as $x_{i}\cap x_{j}=\varnothing$ for $i\neq j$). 

    A \emph{colored component} $T$ of $H$ is a maximal subgraph with at least one edge whose edges are labeled only with labels from a single family. We denote by $\mathcal{CC}(H)$ the set of colored components of $H$.
    
    The \emph{graph of colored components} of $H$ with respect to the families $\{x_{i}\}_{i\in I}$ denoted $\mathcal{GCC}(H)$ is the following bipartite graph:
    \begin{itemize}
        \item The first kind of vertices are the colored components $T_{1},\cdots,T_{K}$ of $H$.
        \item The second kind of vertices are vertices $v_{1},\cdots,v_{L}$ of $H$ that belong to at least two colored components.
        \item There is an edge between $v_{i}$ and $T_{j}$ if $v_{i}\in T_{j}$ for all $1\leq i\leq L$, $1\leq j \leq K$.
    \end{itemize}
    
\end{definition}

For a given connected simple graph $H=(V,E)$, we denote 
\begin{equation}\label{Euler}
    \eta(H):=|V|-|E|-1.
\end{equation}

Note that $\eta(H)\leq0$ with equality if and only if $H$ is a tree (see \cite{guionnet_large_2009} Lemma 1.1).

\begin{remark}\label{eta GCC}
    We will often talk about the graph of colored components of $A+B$, implicitly stating that it is with respect of the family of edges coming from $A$ and the family of edges coming from $B$. In that case, we denote $\mathcal{V}_{cc}$ (for colored component) the first kind of vertices of the graph of colored components of $H$ and $\mathcal{V}_{co}$ (for  connectors) the second kind of vertices. Denoting $\mathcal{E}$ the set of edges of $\GCC(H)$, notice that $|\mathcal{E}|=2|\mathcal{V}_{co}|$, hence $\eta(\GCC(H))=|\mathcal{V}_{cc}|-|\mathcal{V}_{co}|-1$.
\end{remark}

\begin{definition}
        Let $J$ be a set of indices and consider two families of formal variables $\mathbf{x}=(x_{j})_{j\in J}$ and $\mathbf{x^{*}}=(x_{j}^{*})_{j\in J}$.
	    \begin{enumerate}
	        \item A \emph{*-test graph} $T = (G, \gamma, \epsilon )$ in the variables $\mathbf{x}$ is a finite connected multi-digraph $G = (V , E, \mathrm{src}, \mathrm{tar})$ together with edge labels $\epsilon: E \rightarrow \{1,*\}$ and $\gamma : E \rightarrow J$. One can see the maps $\gamma$ and $\epsilon$ as indicating that an edge $e\in E$ is labeled $x_{\gamma(e)}^{\epsilon(e)}$. The maps $\mathrm{src}$, $\mathrm{tar} : E \rightarrow V$ specify the source, $\mathrm{src}(e)$ and target, $\mathrm{tar}(e)$ of each edge $e \in E$. 
            \item A \emph{*-graph monomial} $g=(T,v_{in},v_{out})$ is a *-test graph $T$ together with the data of two vertices  $v_{in}$ and $v_{out}$ in $V$. We refer to the roots $(v_{in}, v_{out}) \in V^{2}$ as the input and the output, respectively, though they need not be distinct. We denote by $\mathcal{G}\langle\mathbf{x},\mathbf{x^*}\rangle$ the set of all such *-graph monomials which can be extended to $\C\mathcal{G}\langle\mathbf{x},\mathbf{x^*}\rangle$ the space of finite linear complex combinations of *-graph monomials in variable $\mathbf{x}$ (graphs are considered up to isomorphisms of graphs preserving labeled and in/outputs).
	    \end{enumerate}
	\end{definition}
    
    \begin{example}\label{traffic matrices}
        For a family $\mathbf{A}_{N}=(A_{N,j})_{j\in J}$ of random matrices, we define the evaluation of a *-graph monomial $g$ in the family $\mathbf{A}_{N}$ via the formula,
    \begin{equation}
        g(\mathbf{A}_{N})(i,j):=\sum_{\substack{\phi:V\rightarrow[N] \\ \phi(v_{out})=i,\phi(v_{in})=j}}\prod_{e\in E}A^{\epsilon(e)}_{N,\gamma(e)}(\phi(\mathrm{tar}(e)),\phi(\mathrm{src}(e))).
    \end{equation}
    For convenience, we will often denote $\phi(e)$ for $(\phi(\mathrm{tar}(e)),\phi(\mathrm{src}(e)))$.
    When one studies random matrices in the framework of free probability, one is interested in the expectation of the normalized trace of any polynomials evaluated in a family of random matrix. In the context of this section, the \emph{traffic distribution} of a family of random matrices is the data of the expectation of the normalized trace of any *-graph monomial evaluated in this family.
    \end{example}

    \begin{example}\label{traffic operators}
    Let $\mathcal{G}=(\mathcal{V},\mathcal{E})$ be a locally finite rooted graph (possibly dependent on $N$). Let $\mathbf{a}_{N}=(a_{N,j})_{j\in J}$ be a family of random  operators on $B(L^{2}(\mathcal{V}))$ such that for all $j\in J$, $\langle a_{N,j}\delta_{v},\delta_{w}\rangle=0$ if $(v,w)$ is not an edge of $\mathcal{G}$. We define the evaluation of a *-graph monomial $g$ in the family $\mathbf{a}_{N}$ via the formula,
    \begin{equation}
        \langle g(\mathbf{a}_{N})\delta_{v},\delta_{w}\rangle:=\sum_{\substack{\phi:V\rightarrow\mathcal{V} \\\text{graph morphism},\\ \phi(v_{out})=v,\phi(v_{in})=w}}\prod_{e\in E}\langle a^{\epsilon(e)}_{N,\gamma(e)}\delta_{\phi(\mathrm{tar}(e))},\delta_{\phi(\mathrm{src}(e))}\rangle.
    \end{equation}
    For convenience, we will often denote $a(\phi(e))$ for $\langle a\delta_{\phi(\mathrm{tar}(e))},\delta_{\phi(\mathrm{src}(e))}\rangle$. Note that $g(\mathbf{a}_{N})$ does not necessarily verify the condition that $\langle g(\mathbf{a}_{N})\delta_{v},\delta_{w}\rangle=0$ if $(v,w)\notin \mathcal{E}$ however, it can be seen as an adjacency operator of some weighted locally finite graph. Note that this coincides with the previous definition when $\mathcal{G}$ is the complete graph on $N$ vertices. 
    \end{example}

    The following definitions are usually defined for test-graphs. We define their counterpart for (rooted) graph monomials and we keep the same names.

    \begin{definition}\label{def: traffic distribution}
        Let $i\in[N]$, the $i$-rooted \emph{traffic distribution} of a family $\mathbf{A}_{N}$ of random matrices is the map
        \begin{equation}
            \Phi^{i}_{\mathbf{A}_{N}}:g\in\C\mathcal{G}\langle\mathbf{x},\mathbf{x^*}\rangle\rightarrow\E\left[g(\mathbf{A}_{N})_{i,i}\right].
        \end{equation}

        Let $\mathbf{a}_{N}$ be a family of random operators acting on $L^{2}(\mathcal{V})$ where $\mathcal{G}=(\mathcal{V},\mathcal{E},\rho)$ is a locally finite $\rho$-rooted graph dependent on $N$. The $\rho$-rooted traffic distribution of $\mathbf{a}_{N}$ is
        \begin{equation}
            \Phi^{\rho}_{\mathbf{a}_{N}}:g\in\C\mathcal{G}\langle\mathbf{x},\mathbf{x^*}\rangle\rightarrow\E\left[\langle g(\mathbf{a}_{N})\delta_{\rho},\delta_{\rho}\rangle\right].
        \end{equation}
        With a slight abuse of notation, we also denote $\tau_{i}(g(\mathbf{A}_{N}))=\Phi^{i}_{\mathbf{A}_{N}}(g)$ and similarly for operators.
    \end{definition}

    Note that these two definitions coincide when $\G$ is the complete graph on $N$ vertices. It is natural in a traffic probability framework to decompose the sums in Examples \ref{traffic matrices} and \ref{traffic operators} depending on the default of injectivity of $\phi$. More precisely,
    \begin{align*}
        g(\mathbf{A}_{N})(i,j)&=\sum_{\pi\in P(V)}\sum_{\substack{\phi:V^{\pi}\rightarrow[N],\\\phi\text{ injective}, \\ \phi(v_{out})=i,\phi(v_{in})=j}}\prod_{e\in E^{\pi}}A^{\epsilon(e)}_{N,\gamma(e)}(\phi(\mathrm{tar}(e)),\phi(\mathrm{src}(e))),\\
        \langle g(\mathbf{a}_{N})\delta_{v},\delta_{w}\rangle&=\sum_{\pi\in P(V)}\sum_{\substack{\phi:V^{\pi}\rightarrow\mathcal{V} \\\text{injective graph morphism},\\ \phi(v_{out})=v,\phi(v_{in})=w}}\prod_{e\in E^{\pi}}\langle a^{\epsilon(e)}_{N,\gamma(e)}\delta_{\phi(\mathrm{tar}(e))},\delta_{\phi(\mathrm{src}(e))}\rangle,
    \end{align*}
    where a partition $\pi$ of the vertices of the graph monomial $g$ induces a new graph monomial $g^{\pi}=(V^{\pi},E^{\pi},\gamma,\varepsilon,v_{in},v_{out})$ obtained from $g$ after identifying vertices in a same block of $\pi$. This motivates the following definition
    \begin{definition}
    The $i$-rooted injective trace of $g$ in the matrices $\mathbf{A}_{N}$, and the $\rho$-rooted injective trace of $g$ in the operators $\mathbf{a}_{N}$ is 
        \begin{align*}
            \tr^{0}_{i}(g(\mathbf{A}_{N}))&=\sum_{\substack{\phi:V\rightarrow[N],\\\phi\text{ injective}, \\ \phi(v_{out})=\phi(v_{in})=i}}\prod_{e\in E}A^{\epsilon(e)}_{N,\gamma(e)}(\phi(\mathrm{tar}(e)),\phi(\mathrm{src}(e))),\\
            \tr^{0}_{\rho}(g(\mathbf{a}_{N}))&=\sum_{\substack{\phi:V\rightarrow\mathcal{V} \\\text{injective graph morphism},\\ \phi(v_{out})=\phi(v_{in})=\rho}}\prod_{e\in E}\langle a^{\epsilon(e)}_{N,\gamma(e)}\delta_{\phi(\mathrm{tar}(e))},\delta_{\phi(\mathrm{src}(e))}\rangle,
        \end{align*}
        so that
        \begin{align*}
            \tau_{i}(g(\mathbf{A}_{N}))&=\sum_{\pi\in P(V)}\E\tr^{0}_{i}(g^{\pi}(\mathbf{A}_{N}))=:\sum_{\pi\in P(V)}\tau^{0}_{i}(g^{\pi}(\mathbf{A}_{N})),\\
            \tau_{\rho}(g(\mathbf{a}_{N}))&=\sum_{\pi\in P(V)}\E\tr^{0}_{\rho}(g^{\pi}(\mathbf{A}_{N}))=:\sum_{\pi\in P(V)}\tau^{0}_{\rho}(g^{\pi}(\mathbf{A}_{N})).
        \end{align*}
        Note that if $\pi$ does not identify $v_{in}$ and $v_{out}$, the injective trace is zero.
    \end{definition}

    \begin{remark}
        In \cite{male_traffic_2020} it is shown that whenever the families $\mathbf{A}_{N}^{(1)},\cdots,\mathbf{A}_{N}^{(L)}$ are permutation invariant, converge in traffic distribution and satisfy some factorization property, then they are asymptotically traffic independent. Moreover, it is shown in \cite{au_large_2021} that, provided the same condition holds, they are asymptotically free over the diagonal. 
    \end{remark}

In the following, we still call $T_{N}$ a test graph where $T_{N}:=(V_{N},E_{N},\gamma,\epsilon)$ is no longer fixed with respect to $N$ but is allowed to have a number of vertices and edges dependent of $N$. Furthermore, when specified, we also allow the test-graph to have several connected components, however one should be careful when dealing with this case since the injective trace does not behave well with multiple components. Again, when unspecified, a generic test-graph is always finite and have a single connected component.

    \subsection{Assumption on the injective trace}
    We provide in this subsection one of the assumptions of Theorem \ref{thm A}. Heuristically, this is a boundedness condition for large "moments" but in traffic framework, i.e. for large test graphs. We then provide several random matrix models that satisfy this condition.

\begin{assumption}\label{H}
    Let $M=M(N)$ be a sequence that goes to infinity with $N$ and that is smaller than $\log N / \log \log N$. The sequence $X_{N}$ of random matrices satisfies Assumption \ref{H} at speed $M$ if there exists some positive constant $h$ such that for any positive constant $c$ and for any sequence of test-graphs $T_{N}:=(V_{N},E_{N},\epsilon)$ such that $T_{N}$ has $K(N)$ connected components: $T_{1,N},\cdots,T_{K(N),N}$ $(T_{i,N}:=(V_{i,N},E_{i,N}))$ verifying
    \begin{equation}
        \sum_{k=1}^{K(N)}|V_{k,N}|\leq c M,\quad \sum_{k=1}^{K(N)}|E_{k,N}|\leq cM,
    \end{equation}
    we have
    \begin{equation}\label{equation assumption bound injective trace}
        \E\left[\frac{1}{N^{K(N)}}\mathrm{Tr}^{0}\left(T_{N}\left(X_{N}\right)\right)\right]=O\left(N^{c\cdot h}\right),
    \end{equation}
    for some constant $h$.
\end{assumption}

First note that $M\leq \log N$, so we often show that the quantity in the left-hand side of \eqref{equation assumption bound injective trace} is bounded by $C^{M}$ for some constant $C$ which yields equation \eqref{equation assumption bound injective trace}. 
We check that several model of random matrices verify these assumptions.

\paragraph{Diluted matrices with bounded entries.}
Let $X_{N}$ be a (possibly random) matrix of size $N$ such that almost surely, the number of non zero entries of $X$ per row and per column is bounded by $C=C(N)$. Moreover, assume also that the absolute value of the entries are bounded by $C$. The speed $C$ must be such that $\log C / \log N\rightarrow 0 $ as $N\rightarrow\infty$.  We first use the bound
\begin{equation*}
    \left|\E\left[\frac{1}{N^{K(N)}}\mathrm{Tr}^{0}\left(T_{N}(X_{N})\right)\right]\right|\leq\E\left[\frac{1}{N^{K(N)}}\mathrm{Tr}^{0}\left(T_{N}(|X_{N}|)\right)\right].
\end{equation*}

Now choose an arbitrary vertex in each connected component, say $v_{1},\cdots,v_{K(N)}\in V_{1,N}\times\cdots\times V_{K(N),N}$, and sum over all possible values they can take. Since we sum over injective vertex assignments, the other vertices must take a different value. Denoting $V'_{N}:=V_{N}\setminus\{v_{1},\cdots,v_{K(N)}\}$ and $[N]':=\{1,\cdots,N\}\setminus\{i_{1},\cdots,i_{K(N)}\}$ for short, we get
\begin{equation*}
    \frac{1}{N^{K(N)}}\mathrm{Tr}^{0}(T(|X_{N}|))=\frac{1}{N^{K(N)}}\sum_{\substack{i_{1},\cdots,i_{K(N)}=1,\\\text{pairwise distinct}}}^{N}\sum_{\substack{\phi:V'_{N}\rightarrow[N]',\\\phi\text{ injective}}}\prod_{e\in E_{{N}}}|X(\phi(e))|,
\end{equation*}
where $\phi$ is extended on $V_{N}$ by $\phi(v_{k})=i_{k}$ for $1\leq k \leq K(N)$.
For every $1\leq k\leq K(N)$, we do the following reasoning. After having chosen the value of $v_{k}$, since $X_{N}$ is sparse, for each vertex $w$ at distance 1 of $v_{k}$ there is at most $C$ possible values for $\phi(w)$ that lead to a non zero contribution. The same goes for vertices at distance 2 from $v_{k}$: after having chosen the values of vertices at distance $\leq2$ from $v_{k}$, there are at most $C$ possible values that lead to a non zero contribution for each new vertex at distance $2$. Hence, after having chosen the value of the vertex $v_{k}$, we have at most $C^{|V_{k,N}|}$ choices for the remaining vertices in the connected component of $v_{k}$. Furthermore, since the entries are bounded, each contribution is less than $C^{|E_{k,N}|}$. We obtain the bound
\begin{align*}
    \left|\E\left[\frac{1}{N^{K(N)}}\mathrm{Tr}^{0}\left(T_{N}(X_{N})\right)\right]\right|&\leq\frac{N(N-1)\cdots (N-K(N)+1)}{N^{K(N)}}\prod_{k=1}^{K(N)}C^{|V_{k,N}|+|E_{k,N}|},\\
    &\leq C^{2cM}.
\end{align*}
Choosing $M$ such that $M\log C =\log N $ is the best speed we can achieve provided $M\leq \log N/\log\log N$ and $\log N / \log C \rightarrow \infty$. Otherwise, one may just take $M = \log N / \log \log N$.
Note that random permutation matrices, adjacency matrices of uniform $C$-regular graphs are special cases of such sparse bounded matrices.

\paragraph{Erdos-Renyi matrices.}

Let $B_{N}$ be the adjacency matrix of an Erdos-Renyi random graph: a graph with set of vertices $[N]$ and where each (undirected) edge $e=\{i,j\}$ with $i\neq j$ is present with probability $p=p(N)$ for some $0<p<1$ such that $p\rightarrow 0$ as $N\rightarrow\infty$ and $Np\rightarrow\infty$. Note that since the edges are undirected $X_{N}$ is symmetric with 0 on the diagonal and i.i.d. sub-diagonal entries following a Bernouilli law of parameter $p$. Let $X_{N}=\frac{B_{N}-p(J-I)}{\sqrt{Np(1-p)}}$, be the centered, normalized version of $B_{N}$ where $J$ is the full 1 matrix and $I$ the identity.
Note that if the test graph $T_{N}$ has a loop, its injective trace is zero since $X_{N}$ only has 0's on its diagonal. Let us assume that $T_{N}$ does not have any loops and let us denote $\overline{E}$ the set of edges once we forget the multiplicity and the orientation of the edges of $T_{N}$. We denote $m(e)$ the multiplicity of the edge $e$ when forgetting about the orientation of the edges. Using the independence of the entries, we can explicitly compute
\begin{align*}
    \E&\left[\frac{1}{N^{K(N)}}\mathrm{Tr}^{0}\left(T_{N}(X_{N})\right)\right]=\frac{1}{N^{K(N)}}\sum_{\substack{\phi:V\rightarrow[N],\\\phi\text{ injective}}}\E\left[\prod_{\overline{e}\in\overline{E}}X(\phi(e))^{m(e)}\right]\\
    &=\frac{1}{N^{K(N)}}\#\left\{\substack{\phi:V\rightarrow[N],\\\phi\text{ injective}}\right\}\prod_{\bar e\in\bar E}\left(p\left(\frac{1-p}{\sqrt{Np(1-p)}}\right)^{m(\bar e)}+ (1-p)\left(\frac{-p}{\sqrt{Np(1-p)}}\right)^{m(\bar e)}\right),\\
    &=N^{|V|-K(N)-|\overline{E}|}(1+o(1))\prod_{\bar e\in\bar E}\left((1-p)\left(\frac{1-p}{Np}\right)^{\frac{m(\bar e)}{2}-1}+(-1)^{m(\bar e)}p\left(\frac{p}{N(1-p)}\right)^{\frac{m(\bar e)}{2}-1}\right),
\end{align*}
where we used Proposition \ref{Stirling} for the last line. If there is some edge of multiplicity exactly one, since the entries are centered, the above quantity is 0. Otherwise the second term in each term of the product is negligible and we obtain
\begin{align*}
    \E\left[\frac{1}{N^{K(N)}}\mathrm{Tr}^{0}\left(T_{N}(X_{N})\right)\right]&=N^{|V|-K(N)-|\overline{E}|}(1+o(1))\prod_{\bar e\in\bar E}\left(\frac{1}{Np}\right)^{m(\bar e)/2-1},\\
    &=N^{|V|-K(N)-|\overline{E}|}(Np)^{|\bar E|-|E|/2}(1+o(1)).
\end{align*}

By Euler's identity, we have $|V|-K(N)-|\overline{E}|\leq 0$ with equality if and only if each connected component is a tree when forgetting about the multiplicity and the orientation of the edges. Since the edges all have at least multiplicity 2, $|\bar E|-|E|/2\leq 0$ with equality if and only if each connected component of $T_{N}$ is a double tree. Finally, we get the bound
\begin{equation}
    \E\left[\frac{1}{N^{K(N)}}\mathrm{Tr}^{0}\left(T_{N}(X_{N})\right)\right]=O(1).
\end{equation}

This assumption is also satisfied for $Np\rightarrow c>0$, however, this model does not verify Assumption \ref{assumption frobenius}.

\paragraph{ Diluted Wigner matrices with moment conditions.} 
Erdös-Renyi matrices as above are a special case of diluted Wigner matrices in the following sense. 

Let $X=(x^{N}_{i,j})1\leq i,j\leq N$ be a Hermitian matrix such that the $(x^{N}_{i,j})1\leq i\leq j \leq N$ are centered, independent random variables that have moments up to order $cM$ (where $M$ goes to infinity and is smaller than $\log N / \log\log N$) and such that there exists some constants $h,\varepsilon>0$ independent of $N$ such that for all $1\leq i \leq j\leq N$ and for all $2\leq k_{1},k_{2}\leq cM$, we have 
\begin{equation}\label{moment condition diluted wigner}
    N|\E (x^{N}_{i,j})^{k_{1}}(\overline{x^{N}_{i,j}})^{k_{2}}|\leq \frac{\log(N)^{h(k_{1}+k_{2})}}{N^{\varepsilon}},
\end{equation}
and $N\E|x^{N}_{i,j}|^{2}=1$. Note that $M$ depends on $N$ so that Equation \eqref{moment condition diluted wigner} must be satisfied for $k_{1},k_{2}$ possibly dependent on $N$.

These matrices are a special case of heavy Wigner matrices (see \cite{zakharevich_generalization_2006}) since at fixed $k$ the quantity in \eqref{moment condition diluted wigner} goes to 0 but we also impose a bound of order $N^{c}$ for the moments of order $\log N/\log \log N$. Actually, heavy Wigner matrices with moments conditions similar to \eqref{moment condition diluted wigner} also satisfy Assumption \ref{H} however, they do not satisfy in general Assumption \ref{assumption frobenius}.

Take $T_{N}$ as in Assumption \ref{H} of size at most $cM$. For $\bar e\in \bar E$, recall that $\bar e$ is the set of all edges that are between two vertices, say $v$ and $w$. Choose an arbitrary orientation, say from $v$ to $w$ and, set $m_{+}(\bar e)$, resp. $m_{-}(\bar e)$, the number of edges in the equivalence class $\bar e$ that have the same orientation, resp. the opposite one. With similar technique as in the previous case, we obtain
\begin{align*}
    \E \frac{1}{N^{K(N)}}\mathrm{Tr}^{0}(T_{N}(X_{N}))=N^{|V|-K(N)-|\bar E|}\prod_{\bar e \in \bar E}\E\left(NX_{N}(\phi_{N}(\bar e))^{m_{+}(\bar e)}\overline{X_{N}(\phi_{N}(\bar e))}^{m_{-}(\bar e)}\right).
\end{align*}

We denote by $\bar E_{2}\subset \bar E$ the set of edges such that $m_{+}(\bar e)=m_{-}(\bar e)=1$, i.e. edges that will contribute for one since $N\E|x|^{2}=1$. Each of the other edges $\bar e\in\bar E\setminus \bar E_{2}$ will have a contribution of $\log_{N}^{h(m_{+}(\bar e)+m_{-}(\bar e))}/N^{\varepsilon}$. Denoting now $E_{2}\subset E$ the set of edges such that $m_{+}(\bar e)=m_{-}(\bar e)=1$, we obtain
\begin{equation}
    \left|\E \frac{1}{N^{K(N)}}\mathrm{Tr}^{0}(T_{N}(X_{N}))\right|\leq N^{|V|-|\bar E|-K(N)}\frac{\log(N)^{h(|E|-2|\bar E_{2}|)}}{N^{\varepsilon(|\bar E|-|\bar E_{2}|)}}\leq N^{hc}.
\end{equation}

\paragraph{Entry wise product.} 
Let $X_{N}$ be a matrix that verifies Assumption \ref{H} at speed $M$ for some constant $h$ and let $\Gamma_{N}$ be a matrix that have all entries bounded by $C=C(N)$ such that there exists a constant $h'$ for which $M\log C\leq h'\log N\Leftrightarrow C^{M}\leq N^{h'}$ (note that this is always the case for $C=\log (N)^{\alpha}$ for any constant $\alpha$). Denote $W_{N}:=X_{N}\circ\Gamma_{N}$ the entry wise product of the two matrices. We easily have
\begin{equation}
    \left|\E\left[\frac{1}{N^{K(N)}}\mathrm{Tr}^{0}\left(T_{N}(X_{N}\circ\Gamma_{N})\right)\right]\right|\leq\left|\E\left[\frac{1}{N^{K(N)}}\mathrm{Tr}^{0}\left(T_{N}(X_{N})\right)\right]\right|\times C^{cM}=O\left( N^{c(h+h')}\right),
\end{equation}
showing that $W_{N}$ also verifies Assumption \ref{H}.

    \subsection{Assumption on the Frobenius norm}
    \begin{assumption}\label{assumption frobenius}
    Let $M=M(N)$ be a sequence that goes to infinity with $N$. The sequence of random matrices $X_{N}$ satisfies Assumption \ref{assumption frobenius} at speed $M$ if there exists a positive constant $C$ independent of $N$ such that for all $N$, 
    \begin{equation}\label{equation assumption frobenius}
        \E\frac{1}{N}||X^{M}||_{F}^{2}\leq C^{M}.
    \end{equation}
\end{assumption}

This is obviously true at any speed for sequences of matrices that are uniformly bounded in operator norm since $\frac{1}{N}||X_{N}^{M}||_{F}^{2}\leq ||X_{N}^{M}||^{2}_{op}\leq ||X_{N}||_{op}^{2M}\leq C^{2M}$ where $C$ is the bound of the operator norm of $X_{N}$.

However, this might also hold even though the operator norm of $X_{N}$ diverges. Take for instance the re-centered re-normalized version of the adjacency matrix of an Erdos-Renyi graph of parameter $p$ with $Np\rightarrow\infty$ and $Np/\log N\rightarrow 0$ as $N\rightarrow\infty$. It is shown in \cite{benaych-georges_largest_2019} that the largest eigenvalue of $X_{N}$ diverges. However, developing the trace in the Frobenius norm in term of injective trace and using calculations from the previous subsection we have the expression.
\begin{align*}
    \E\frac{1}{N}\mathrm{Tr}(X^{2M})= \E\frac{1}{N}\mathrm{Tr}(T_{2M}(X))=\sum_{\pi\in P(V_{2M})}N^{|V_{\pi}|-|\bar E_{\pi}|-1}(Np)^{|\bar E_{\pi}|-|E_{\pi} |/2}(1+o(1)),
\end{align*}
where the test-graph $T_{2M}=(V_{2M},E_{2M})$ is a cycle of length $2M$. When the partition $\pi$ makes $T^{\pi}$ into a double tree, then the contribution is 1. When the partition $\pi$ makes $T^{\pi}$ a fat tree (i.e. there is some edge with multiplicity at least 4) then the contribution is at most $(Np)^{-1}$ and otherwise the contribution is at most $1/N$. Bounding the number of partition by $2M^{2M}$ we obtain
\begin{align*}
    \E\frac{1}{N}\mathrm{Tr}(X^{2M})\leq C_{M}+M^{M}(Np)^{-1}+M^{M} N^{-1},
\end{align*}
where $C_{M}$ is the $M$-th Catalan number, i.e. the number of trees with $M$ edges (or double trees with $2M$ edges). The Catalan number can be bounded by some power of $M$ whereas, taking $M=\log(Np)/\log\log(Np)$, we obtain Equation \eqref{equation assumption frobenius}.

On the contrary, taking a sparse Erdos-Renyi graph, i.e. $Np\rightarrow c>0$, we would have had a contribution of order 1 for each fat tree and the number of fat trees of size $M$ cannot be bounded by some power of $M$.

Similarly for diluted Wigner matrices, distinguishing between double trees that have a contribution of 1, fat trees that have at least an edge of multiplicity strictly greater than 2 or other cases, recalling that $\bar E_{\pi,2}$ is the set of edges of $\bar E_{\pi}$ that have multiplicity exactly 2, we have
\begin{align*}
    \left|\E\frac{1}{N}\mathrm{Tr}(T_{2M}(X))\right|&\leq \sum_{\pi\in P(V_{2M})}N^{|V_{\pi}|-|\bar E_{\pi}|-1}\frac{\log(N)^{h(|E_{\pi}|-2|\bar E_{\pi,2}|)}}{N^{\varepsilon|(\bar E_{\pi}|-|\bar E_{\pi,2}|)}},\\
    &\leq  C_{M}+ M^{M}\log(N)^{2hM}N^{-\varepsilon}+M^{M}\log(N)^{2hM}N^{-1}.
\end{align*}
Take now $M=\beta\log N / \log\log N$, with $\beta< \frac{\varepsilon}{1+2h}$, only the contribution of $C_{M}$ remains and we obtain the wanted bound. Hence diluted Wigner matrices satisfy Assumption \ref{assumption frobenius} at speed $\beta \log N/\log\log N$ for $\beta$ small enough.

\section{Proof of the main theorems}
    \subsection{Preliminary results}

We provide some useful lemmas and notations prior to the proof of Theorems \ref{thm A} and \ref{thm B}. The first lemma ensures that when we glue two graphs together while not identifying vertices from a same graph, if we start with a graph that has a graph of colored components which is not a tree, the resulting graph will also have a graph of colored components which is not a tree. Recall from \eqref{Euler} that, for a simple connected graph $H=(V,E)$, $\eta(H):=|V|-|E|-1\leq 0$, and the equality holds if and only if $H$ is a tree.

\begin{lemma}\label{GCC pas arbre}
    Let $S_{1}=(V_{1},E_{1},\gamma_{1},\varepsilon_{1}),S_{2}=(V_{2},E_{2},\gamma_{2},\varepsilon_{2})$ be two test graphs labeled by $\{a,b\}$ and let $\sigma$ be an amalgamation between $V_{1}$ and $V_{2}$, i.e. a partition of $V_{1}\sqcup V_{2}$ where all the blocks are either of size one, either of size two with one vertex from $V_{1}$ and one vertex from $V_{2}$. Assume moreover that $\sigma$ has at least one block of size two (hence $(S_{1}\sqcup S_{2})^{\sigma}$ is connected). Then, we have
        \begin{equation}
            \eta(\GCC((S_{1}\sqcup S_{2})^{\sigma}))\leq \min\{\eta(\GCC(S_{1})),\eta(\GCC(S_{2}))\}.
        \end{equation}
\end{lemma}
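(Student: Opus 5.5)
We track how the graph of colored components changes under the gluing. By Remark~\ref{eta GCC}, for any connected test graph $H$ labeled by $\{a,b\}$ we have $\eta(\GCC(H))=|\mathcal{V}_{cc}(H)|-|\mathcal{V}_{co}(H)|-1$, where $\mathcal{V}_{co}(H)$ is the set of connectors, i.e. vertices lying in at least two colored components. Here a connector lies in exactly one $a$-component and one $b$-component. The plan is to compare the counts of components and connectors of $H:=(S_{1}\sqcup S_{2})^{\sigma}$ with those of $S_{1}\sqcup S_{2}$, and then use $\eta(\GCC(S_{2}))\leq 0$ to absorb the second graph. By symmetry it suffices to prove $\eta(\GCC(H))\leq\eta(\GCC(S_{1}))$.

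Let $k\geq 1$ be the number of blocks of size two of $\sigma$, that is, the number of identifications $u\sim u'$ with $u\in V_{1}$ and $u'\in V_{2}$. It is convenient to perform these identifications one at a time, so that at each step only one vertex of $S_{1}$ is merged with one vertex of $S_{2}$. The first step glues two disjoint graphs. Each later step glues two vertices that are already in the same connected component, which is where possible cycles in $\GCC$ are created.

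At each step we track the quantity $Q:=|\mathcal{V}_{cc}|-|\mathcal{V}_{co}|$ and check that it does not increase. When we identify $u$ and $u'$, two colored components of the same color $a$ (resp. $b$), one containing $u$ and one containing $u'$, merge if both exist. This lowers $|\mathcal{V}_{cc}|$ by one for each color in which both are present and the components were distinct. Meanwhile the new vertex becomes a connector exactly when it sees both colors. The case analysis runs over which colors are incident to $u$ in $S_{1}$ and to $u'$ in $S_{2}$. The change in $|\mathcal{V}_{co}|$ is at most $+1$, when the merged vertex is a new connector, or $-1$, when two connectors merge into one. One then checks in each case that $\Delta Q\leq 0$ at every step, with one exception: at the first step, gluing two disjoint connected graphs, $\Delta Q$ can equal $+1$ exactly when the two components pass to a single connected component. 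This $+1$ is what compensates the $-1$ in $\eta$ when two components are joined. Hence
\[
\eta(\GCC(H))\leq Q(S_{1})+Q(S_{2})-1\leq \eta(\GCC(S_{1}))+\eta(\GCC(S_{2}))+1 .
\]
Since $\eta(\GCC(S_{2}))+1\leq 1$, this is not yet enough, so the bookkeeping has to be sharpened. Gluing two disjoint graphs at a single vertex makes $\eta$ additive, $\eta(H)=\eta(S_{1})+\eta(S_{2})$, after accounting for the $-1$ offsets. This uses that gluing connected graphs at a vertex gives $\eta_{\GCC}$ additive exactly, and not up to an error of $+1$.

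The main obstacle is the careful verification of these cases at each identification step. This is especially delicate when $u$ or $u'$ is not a connector: it may carry only one color, or be isolated in the degenerate case. There, merging can create a new connector, so $|\mathcal{V}_{co}|$ increases, but a new connector arises only when the two sides carry different colors, in which case no components merge. The net change in $\eta$ is then $-1$, or $0$ at the first, connecting step. I would organize this through the tree description: $\GCC$ is connected, and each identification either adds an edge or vertex to it while keeping it connected, or contracts two same-color components. In both cases the cycle rank $-\eta$ cannot decrease, while the first gluing joins $\GCC(S_{1})$ and $\GCC(S_{2})$ along a vertex or an edge. This gives $\eta(\GCC(H))\leq\eta(\GCC(S_{1}))+\eta(\GCC(S_{2}))\leq\min\{\eta(\GCC(S_{1})),\eta(\GCC(S_{2}))\}$, the last inequality because both terms are $\leq 0$.
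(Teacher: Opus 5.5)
Your argument fails at the claim that, after the first gluing, each further identification of a vertex $u\in V_{1}$ with a vertex $u'\in V_{2}$ does not increase $|\mathcal{V}_{cc}|-|\mathcal{V}_{co}|$, i.e.\ that the cycle rank $-\eta$ of the $\GCC$ cannot decrease. Your list of possible effects (adding a vertex or edge to the $\GCC$, or contracting two same-color components) leaves out the merging of two connectors. Suppose $u$ and $u'$ are both connectors lying in the same $a$-component $T_{a}$ and the same $b$-component $T_{b}$ of the current graph. Identifying them removes one connector and merges no component, so $\eta$ increases by one: the $4$-cycle $T_{a}-u-T_{b}-u'-T_{a}$ in the $\GCC$ collapses.

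As a result, the bound you aim for, $\eta(\GCC(H))\leq\eta(\GCC(S_{1}))+\eta(\GCC(S_{2}))$, is false in general. Take $S_{2}$ to be an isomorphic copy of $S_{1}$ with $\eta(\GCC(S_{1}))\leq -1$, and let $\sigma$ pair each vertex with its copy. Then $H$ is $S_{1}$ with every edge doubled. Its colored components have the same vertex sets as those of $S_{1}$, and its connectors are the same, so $\GCC(H)\cong\GCC(S_{1})$. Hence $\eta(\GCC(H))=\eta(\GCC(S_{1}))>2\eta(\GCC(S_{1}))$.

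Your first-step count is also inconsistent. Gluing two disjoint graphs at one vertex always gives $\Delta Q=-1$, never $+1$, and this is exactly why $\eta$ is additive there. The additivity claim is correct; the earlier count with $\Delta Q=+1$ should be discarded.

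The paper's proof avoids this by never identifying two vertices that are already in the current graph. Every block of $\sigma$ contains at most one vertex of $V_{1}$, so $S_{1}$ sits inside $H$ as a subgraph. $H$ is obtained from $S_{1}$ by adding the edges of $S_{2}$ one at a time, in an order where each new edge touches the current graph. This is possible because $S_{2}$ is connected and $\sigma$ has a block of size two. Each new edge either joins two existing vertices or attaches a new vertex.

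Adding an $a$-edge never destroys a connector. It creates a new $a$-component only when no existing endpoint already carries an $a$-edge, and then each existing endpoint, which carries a $b$-edge, becomes a new connector. A short case check gives $\Delta\eta\in\{0,-1\}$ at every step, so $\eta(\GCC(H))\leq\eta(\GCC(S_{1}))$ directly, and the bound by $\eta(\GCC(S_{2}))$ follows by symmetry. This edge-addition monotonicity is the missing idea. You should drop the bound by the sum, since only the bound by the minimum holds.
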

\begin{proof}
        
    The graph $(S_{1}\sqcup S_{2})^{\sigma}$ can be obtained by the following process. Start with $G_{0}=S_{1}$. Since $\sigma$ has a block of size two, there is a couple $(v,w)\in V_{1}\times V_{2}$ such that $v\overset{\sigma}{\sim}w$. Choose an arbitrary edge adjacent to $w$ in $S_{2}$: say $e=(w,w')$. If $w'$ is in a block of size one of $\sigma$, add a new vertex $w'$ to $G_{0}$ and an edge between $v$ and $w'$ with the same label as $e$. Else, if there exists $v'\in V(G_{0})=V_{1}$ such that $v'\overset{\sigma}{\sim}w'$, then add an edge in $G_{0}$ between $v$ and $v'$ with the same label as $e$. The graph thus obtained is $G_{1}$. Repeat this process by choosing at step $l$, a new edge $e=(u,u')\in E_{2}$ that is adjacent to an edge chosen before (i.e. $u$ is a vertex of $G_{l}$) until there is no more edge in $E_{2}$. If $k$ denotes the number of edges of $S_{2}$ then $G_{k}=(S_{1}\sqcup S_{2})^{\sigma}$. We denote $\eta(l):=\eta(\GCC(G_{l}))$ and we show that this quantity does not increase. From Remark \ref{eta GCC}, recall that for any graph $G$ labeled by $A$ and $B$, $\eta(\GCC(G))=|\mathcal{V}_{cc}|-|\mathcal{V}_{co}|-1$. Hence, in order to keep track of $\eta(l)$ we only need to look at the different type of vertices added at each step.
    
    To go from $G_{l}$ to $G_{l+1}$ we distinguish two cases. Either we add an edge $e$ (labeled $a$ or $b$) (i)- between two already existing vertices $v_{1},v_{2}$ of $G_{l}$ or (ii)- between an already existing vertex $v_{1}$ of $G_{l}$ and a new vertex $v_{2}$. In other words, when an edge $(w,w')$ of $S_{2}$ has its two vertices in blocks of size two of $\sigma$, i.e. $\exists v,v'\in V_{1},\,v\overset{\sigma}{\sim}w,\,v'\overset{\sigma}{\sim}w'$, then we add an edge between $v$ and $v'$ in $G_{l}$ otherwise we add a new vertex and a new edge to it.

    We write 
    \begin{equation*}
        G_{l}=\bigsqcup_{i=1}^{I}T^{l,a}_{i}\cup\bigsqcup_{j=1}^{J}T_{j}^{l,b},
    \end{equation*}
    where the $T_{i}^{l,a}$ are the disjoint colored components labeled by $a$ of $G_{l}$ and the $T_{j}^{l,b}$ are the disjoint colored components labeled by $b$ of $G_{l}$. We write $V^{l,a}_{i}$ the set of vertices of $T_{i}^{l,a}$ (and similarly for $b$). For $i\in I$, we write $\overset{\circ}{V^{l,a}_{i}}\subset V_{i}^{l,a}$ the subset of vertices $v\in V_{i}^{l,a}$ that only have adjacent edges labeled $a$ and similarly for $b$. 

    Let $0\leq l\leq k-1$ us assume that $\eta(l)\leq\eta(0)=\eta(S_{1})$ and, without loss of generality, let us assume that the new edge $e=(v,w')$ we add is labeled $a$ (recall that $v$ is a vertex of $G_{l}$). If we are in the first scenario, let $v'\in V_{1}\subset V(G_{l})$ be such that  $v'\overset{\sigma}{\sim}w'$, we add an edge labeled $a$ between $v$ and $v'$.
    
    \begin{enumerate}
        \item If $v\in V_{i}^{l,a}$ and $v'\in V_{i}^{l,a}$, adding an edge labeled $a$ between $v$ and $v'$ does not change the graph of colored components since they are already in the same colored component (with labels $a$), hence $\eta(l+1)=\eta(l)$. Note that it does not matter whether $v$ or $v'$ are connectors or not.
        
        \item If $v\in V_{i_{1}}^{l,a}$ and $v'\in V_{i_{2}}^{l,a}$ with $i_{1}\neq i_{2}$. Adding an edge between $v$ and $v'$ will merge the colored components $T_{i_{1}}^{l,a}$ and $v'\in T_{i_{2}}^{l,a}$, hence the graph of colored components has one vertex less: $\eta(l+1)=\eta(l)-1$.
        
        \item If $v\in \overset{\circ}{V^{l,b}_{j_{1}}}$,
            \begin{enumerate}
                \item and if $v'\in\overset{\circ}{V^{l,b}_{j_{2}}}$ for some $j_{2}\in J$, in the graph of colored components, we add one vertex of type $cc$ and 2 vertices of type $co$ hence $\eta(l+1)=\eta(l)-1$;
                \item and if $v'\in V^{l,b}_{j_{2}}\cap V_{i}^{l,a}$ for some $j_{2}\in J$ and some $i\in I$, in the graph of colored components we add one vertex of type $co$, hence $\eta(l+1)=\eta(l)-1$;
                \item and if $v'\in\overset{\circ}{V^{l,a}_{i}}$ for some $i\in I$, in the graph of colored components we also add one vertex of type $co$, hence $\eta(l+1)=\eta(l)-1$.
            \end{enumerate}
    \end{enumerate}
    If we are in the second scenario: $w'$ is in a block of size one of $\sigma$. There are two cases:
    \begin{enumerate}
        \item either $v\in V_{i}^{l,a}$ for some $i\in I$, then the graph of colored components does not change, hence $\eta(l+1)=\eta(l)$; 
        
        \item or $v\in \overset{\circ}{V_{j}^{l,b}}$ for som $j\in J$, then we add one vertex of type $co$ and one vertex of type $cc$ in the graph of colored components, hence $\eta(l+1)=\eta(l)$.
    \end{enumerate}
        
    \begin{figure}[htbp]
    \centering
    % Première rangée
    \begin{subfigure}[b]{0.3\textwidth}
        \includegraphics[width=\textwidth]{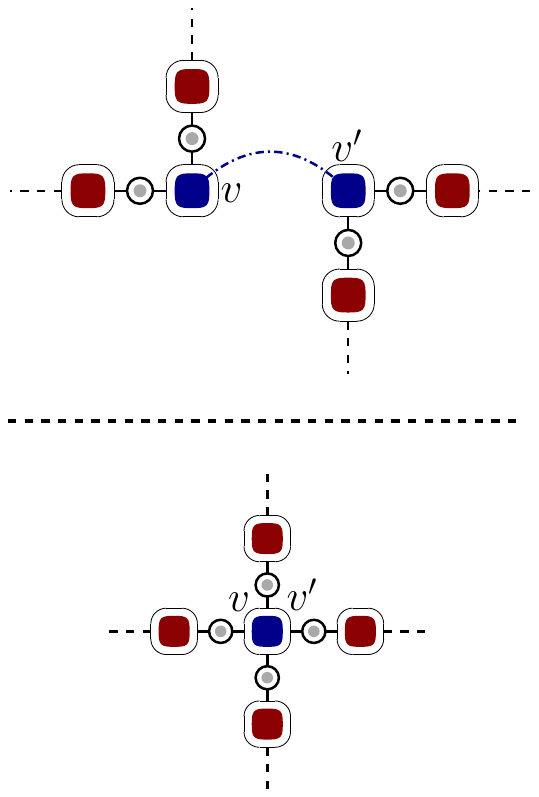}
        \caption{Case 2}
    \end{subfigure}
    \hfill
    \begin{subfigure}[b]{0.3\textwidth}
        \includegraphics[width=\textwidth]{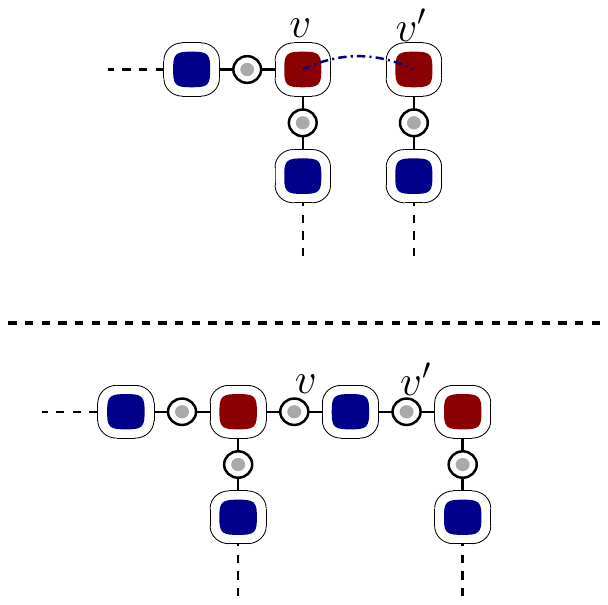}
        \caption{Case 3)a)}
    \end{subfigure}
    \hfill
    \begin{subfigure}[b]{0.3\textwidth}
        \includegraphics[width=\textwidth]{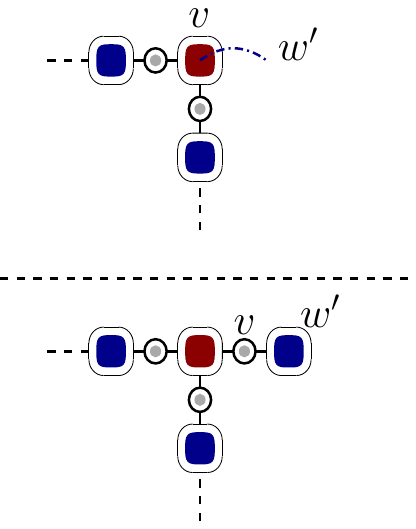}
        \caption{Scenario 2 Case 2}
    \end{subfigure}

    \caption{Examples of the modification of the $\GCC$ when adding an edge. The top picture is a detail of the $\GCC$ with a blue dashed line representing the edge we add at step $l$ in the graph. The bottom picture is the local change of the $\GCC$. Colored squares are vertex from colored components ($a$ is blue, $b$ is red) and gray circles are connectors.}
    \label{fig:six_images}
    \end{figure}
        
    Figure \ref{fig:six_images} provides several examples of different cases. After exhausting every case, we see that $(\eta(l))_{0\leq l \leq k}$ does not increase. Hence $\eta(k)\leq\eta(\GCC(S_{1}))$. This construction being symmetric, it concludes the proof.
    
\end{proof}

We now prove several results concerning the trace and injective trace of graph monomials in terms of $A+B$, $a+b$, $A$ and $B$.

For a graph monomial $h$ labeled by a single variable with $L$ connected components, we define $$t(h(X)):=\E\frac{1}{N^{L}}\sum_{\substack{\phi:V_{h}\rightarrow[N],\\\text{injective}}}\prod_{e\in E_{h}}X(\phi(e))$$ the quantity appearing in Assumption \ref{H}.

\begin{lemma}\label{lemma : injective trace A et B}
    Let $A$ and $B$ be random matrices that verify Assumption \ref{H} at speed $M$ with constants $h_{1}$ and $h_{2}$. Let $g$ be a graph monomial labeled by $a$ and $b$ with $m\leq cM$ vertices and edges and set $\eta:=\eta(\GCC(g))$. We have the following estimate :
    \begin{equation}
        \tau^{0}_{i}(g(A,B))=t(g_{a}(A))t(g_{b}(B))N^{\eta}+O(N^{\eta+c(h_{1}+h_{2})-1}m^{2}),
    \end{equation}
    where $g_{a}$ is the graph monomial obtained from $g$ after deleting all edges labeled $b$ and the isolated vertices and similarly for $g_{b}$.
\end{lemma}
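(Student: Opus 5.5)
We use permutation invariance of $A$ (we may take both $A$ and $B$ permutation invariant and independent, as explained before Theorem \ref{thm A}) to decouple the injective trace into a product of injective traces of $g_{a}$ in $A$ and of $g_{b}$ in $B$, up to a combinatorial normalisation. Since $g$ is rooted and $\phi$ is injective on all of $V$, we have
\begin{equation*}
\tau^{0}_{i}(g(A,B))=\E\sum_{\substack{\phi:V\rightarrow[N]\text{ injective}\\ \phi(v_{in})=i}}\prod_{e\in E_{a}}A(\phi(e))\prod_{e\in E_{b}}B(\phi(e)).
\end{equation*}
The map $\phi$ restricts to injections $\phi_{a}$ on $V(g_{a})$ and $\phi_{b}$ on $V(g_{b})$. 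These must agree on the connector vertices $V(g_{a})\cap V(g_{b})$, and their images must be disjoint outside the connectors.

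First replace $B$ by $V_{\sigma}BV_{\sigma}^{-1}$ with $\sigma$ uniform and independent; this does not change the law. Conditioning on $A$ and $B$ and averaging over $\sigma$, each term with a fixed $\phi_{a}$ and a fixed injection $\psi_{b}$ of $V(g_{b})$ for $B$ is matched by $\sigma$ with probability $\frac{(N-|V(g_{b})|)!}{N!}$, provided the constraints on $\phi_{a}$ leave exactly one consistent $\sigma$-class. Counting precisely, the number of $\sigma$ sending $\psi_{b}$ onto a map compatible with $\phi_{a}$ equals $(N-|V(g_{b})|)!$ times the number of ways to extend, which forces the images of the non-connector $b$-vertices to avoid the image of $\phi_{a}$. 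This yields
\begin{equation*}
\tau^{0}_{i}(g)=\frac{(N-|V_{b}|)!}{N!}\,(N-|V_{a}|)_{|V_{b}|-|\mathcal V_{co}|}\;\E\,\mathrm{Tr}^{0}(g_{a}(A))\;\E\,\mathrm{Tr}^{0}(g_{b}(B))\cdot\frac{1}{N}\cdot(\text{root correction}),
\end{equation*}
where the root correction accounts for fixing $\phi(v_{in})=i$. By permutation invariance, rooting at $i$ is the same as averaging over $i$ and multiplying by $1/N$.

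The difficulty is that $g_{a}$ and $g_{b}$ may have several connected components (say $L_{a}$ and $L_{b}$), while the connectors number $|\mathcal{V}_{co}|$. Writing $\E\,\mathrm{Tr}^{0}(g_{a}(A))=N^{L_{a}}t(g_{a}(A))$ and likewise for $B$, we track the powers of $N$. The falling factorials give $N^{-|V_{b}|}\cdot N^{|V_{b}|-|\mathcal V_{co}|}(1+O(m^{2}/N))$, the traces give $N^{L_{a}+L_{b}}$, and the rooting gives $N^{-1}$. The total exponent is therefore
\begin{equation*}
L_{a}+L_{b}-|\mathcal V_{co}|-1=|\mathcal V_{cc}|-|\mathcal V_{co}|-1=\eta(\GCC(g)),
\end{equation*}
using Remark \ref{eta GCC}, since the colored components are exactly the connected components of $g_{a}$ and $g_{b}$. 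The Stirling-type estimate (Proposition \ref{Stirling}) gives
\begin{equation*}
\frac{(N-k)!}{N!}N^{k}=1+O\!\left(\frac{k^{2}}{N}\right)\quad\text{for } k\le m\le cM\ll\sqrt N.
\end{equation*}
Hence the product of normalisations is $N^{\eta}(1+O(m^{2}/N))$.

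Finally, the error term is $N^{\eta}\cdot O(m^{2}/N)\cdot|t(g_{a}(A))t(g_{b}(B))|$. Since $g_{a}$ and $g_{b}$ each have at most $cM$ vertices and edges, Assumption \ref{H} bounds the last factor by $O(N^{c(h_{1}+h_{2})})$, which gives the stated remainder.

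The main obstacle will be the exact counting in the $\sigma$-averaging step. One must verify that averaging over $\sigma$ really factorises into independent expectations. This requires the independence of $A$ and $B$, which the paper implicitly assumes. One must also check that the disjointness constraint between non-connector $a$-vertices and $b$-vertices produces only the falling-factorial factor $(N-|V_{a}|)_{|V_{b}|-|\mathcal V_{co}|}$. To handle both points, we would first fix the connector values, then count the bijections of the remaining labels explicitly.
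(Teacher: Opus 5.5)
Your proposal is correct and follows essentially the same route as the paper. You average over the root by permutation invariance, conjugate $B$ by an independent uniform permutation so that the $A$- and $B$-parts decouple (using the independence of $A$ and $B$, which the paper also tacitly assumes), and your normalisation $\frac{1}{N}\frac{(N-|V_b|)!}{N!}\frac{(N-|V_a|)!}{(N-|V|)!}N^{L_a+L_b}$ is exactly the paper's prefactor $\frac{(N-|V_a|)!(N-|V_b|)!}{(N-|V|)!(N-1)!}N^{K_a-1}N^{K_b-1}$, which you then control, as the paper does, with the Stirling estimate of Lemma \ref{Stirling} and Assumption \ref{H}; the only cosmetic point is that your ``root correction'' is just this factor $1/N$ and should not be written as a separate factor, which your exponent count in fact already respects.
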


The argument is made in \cite{male_traffic_2020} for test-graphs with bounded number of edges and vertices but it remains valid in our case. For completeness, we restate it here. 

\begin{proof}
Let $g=(V,E,\gamma,\varepsilon,v_{in}=v_{out})$ be a graph monomial labeled by two formal variables $a$ and $b$. Moreover since the matrices $A$ and $B$ are hermitian, we can omit $\varepsilon$ and we denote $X^{a}=A, X^{b}=B$. First note that since $A$ and $B$ are permutation invariant, $$\tau^{0}_{i}(g(A,B))= \frac{1}{N}\sum_{j=1}^{N}\tau^{0}_{j}(g(A,B)),$$ hence
\begin{align*}
    \tau_{i}^{0}(g(A,B))&=\frac{1}{N}\sum_{\substack{\phi:V\rightarrow[N],\\\phi\text{ injective}}}\E\left[\prod_{e=(v,w)\in E}X^{\gamma(e)}(\phi(w),\phi(v))\right]\\
    &=\frac{(N-1)!}{(N-|V|)!}\E_{\phi_{N}}\left[\prod_{e=(v,w)\in E}X^{\gamma(e)}(\phi_{N}(w),\phi_{N}(v))\right],
\end{align*}
where the expectation is also taken with respect to $\phi_{N}$, an injective map chosen uniformly among all injective maps from $V$ to $[N]$, independent of $A$ and $B$. Since the matrices $A$ and $B$ are invariant in law by permutation, we can add another 'layer' of random injective maps for one of these matrices, say $B$ as follows,
\begin{equation*}
    \tau_{i}^{0}(g(A,B))=\frac{(N-1)!}{(N-|V|)!}\E\left[\prod_{\substack{e\in E,\\\gamma(e)=a}}A(\phi_{N}(e))\prod_{\substack{e\in E,\\\gamma(e)=b}}B(\sigma\circ\phi_{N}(e))\right],
\end{equation*}
where $\sigma$ is a random uniformly chosen element of $\mathfrak{S}_{N}$ independent of $(\phi_{N},A,B)$. It is not hard to see that $(A,\phi_{N})$ is independent from $(B,\sigma\circ\phi_{N})$ and that $\sigma\circ\phi_{N}$ has same law as $\phi_{N}$. Hence we can separate the contribution from $A$ and from $B$. Let $g_{a}=(V_{a},E_{a})$, resp. $g_{b}=(V_{b},E_{b})$, be the graph monomial obtained from $g$ by keeping the edges labeled $a$, resp. $b$ and then deleting the isolated vertices. Denote $K_{a}$, resp. $K_{b}$ the number of connected components of $g_{a}$, resp. $g_{b}$. We now have
\begin{align*}
    \tau_{i}^{0}(g(A,B))&=\frac{(N-1)!}{(N-|V|)!}\E\left[\prod_{e\in E_{a}}A(\phi_{N}(e))\right]\E\left[\prod_{e\in E_{b}}B(\phi_{N}(e))\right]\\
    &=\frac{(N-|V_{a}|)!(N-|V_{b}|)!}{(N-|V|)!(N-1)!}N^{K_{a}-1}N^{K_{b}-1}t(g_{a}(A))t(g_{b}(B)).
\end{align*}

In order to use the second point of Lemma \ref{Stirling}, we only have to verify that $\eta:=K_{a}(N)+K_{b}(N)-1-|V_{a}|-|V_{b}|+|V|$ is non positive. Recall that the graph of colored components $\mathcal{GCC}(S):=(\mathcal{V},\mathcal{E})$ of $S$ has two type of vertices. The first type are the colored components: there are $K_{a}(N)+K_{b}(N)$ of them. The second type are the vertices from $S$ that belong to both a colored component in $a$ and a colored component in $b$, there are $|V_{a}\cap V_{b}|$ of them. Hence $|\mathcal{V}|=K_{a}(N)+K_{b}(N)+|V_{a}\cap V_{b}|$. Moreover, for each vertex of the second type, there are two corresponding edges in the graph of colored component, hence $|\mathcal{E}|=2|V_{a}\cap V_{b}|$. Finally, noting that $|V|-|V_{a}|-|V_{b}|=-|V_{a}\cap V_{b}|$, we see that $\eta=|\mathcal{V}|-|\mathcal{E}|-1$. Using Equation \eqref{Euler}, we obtain $\eta\leq 0$ with equality if and only if $\GCC(S)$ is a tree.

Finally, we get that 
\begin{equation*}
    \tau_{i}^{0}(g(A,B))=N^{\eta}t(g_{a}(A))t(g_{b}(B))\left(1+O\left(\frac{m^{2} }{N}\right)\right),
\end{equation*}
which concludes the proof using Assumption \ref{H}.
\end{proof}

We now compute the injective traffic distribution of the operators $a$ and $b$ in terms of injective distribution of the matrices $A$ and $B$. We denote $G_{N}=(V_{N},E_{N},\gamma_{N},\rho)$ the underlying $\rho$-rooted, labeled graph defined in Section 3, where $\rho\in[N]$ and where there is an edge between $v$ and $w$ if $\langle a\delta_{v},\delta_{w}\rangle\neq 0$ (then $\gamma_{N}((v,w))=a$) or $\langle b\delta_{v},\delta_{w}\rangle\neq0$ (then $\gamma_{N}((v,w))=b$). Let $g=(V,E,\gamma,\varepsilon,v_{in},v_{out})$ be a graph monomial labeled by two formal variables $a$ and $b$. Moreover since the matrices $A$ and $B$ are Hermitian, we can omit $\varepsilon$. It is easy to see that if $g$ has a $\GCC$ with respect to the families $a$ and $b$ that is not a tree, then there is no injective labeled graph morphism from $g$ to $G_{N}$, otherwise the graph of colored component of $G_{N}$ would not be a tree.

We denote 
    \begin{equation}
        \begin{split}
            \Phi:&V_{N}\longrightarrow[N]\\
            &\quad\rho\mapsto \rho,\\
            (j_{1},&\cdots,j_{n},\star)\mapsto j_{n},
        \end{split}
    \end{equation}
    where $\star$ stands for $a$ or $b$. The map $\Phi$ induces a labeled graph morphism from $G_{N}$ to the complete graph with $N$ vertices associated to the matrices $A$ and $B$ by keeping the labels as they are.

\begin{lemma}\label{injection petit graphe gros graphe}
    Let $g$ be a graph monomial with $v_{in}=v_{out}$ and such that its $\GCC$ is a tree. There is a bijection between the set of injective graph morphisms from $g$ to $G_{N}$ and maps from $V$ to $[N]$ that are injective on each colored components of $g$. This bijection is induced by the map $\Phi$.
\end{lemma}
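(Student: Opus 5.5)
The map is $\psi\mapsto \Phi\circ\psi$. The plan has three parts: show this map is well defined into the stated set, show it is injective, and show it is surjective by constructing $\psi$ from a given map $\phi:V\to[N]$ by induction along the tree $\GCC(g)$, rooted at the colored component(s) containing $v_{in}=v_{out}$, which is sent to $\rho$.

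\textbf{Well-definedness.} Let $\psi$ be an injective labeled graph morphism from $g$ to $G_{N}$. I will use the fact that $\psi$ sends each colored component $T$ of $g$ (labeled, say, $a$) into a single component $R_{a}(v)$ of $G_{N}$. This holds because edges labeled $a$ only join vertices inside the same $R_{a}(v)$, and $T$ is connected through $a$-edges.

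Next I use that $\Phi$ restricted to $R_{a}(v)$ is injective. Indeed, $R_{a}(v)$ consists of $v$, whose last index is $j_{n}$, together with the tuples $(j_{1},\dots,j_{n},j_{n+1},a)$ for $j_{n+1}\neq j_{n}$, and their last indices are pairwise distinct. Hence $\Phi\circ\psi$ is injective on every colored component.

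\textbf{Injectivity of $\psi\mapsto\Phi\circ\psi$.} I argue by induction along $\GCC(g)$, starting from the root vertex, which must be sent to $\rho$.
\begin{itemize}
\item Suppose $\psi(u)$ is known for a connector $u$ lying in a colored component $T$ with color $a$.
\item Then $R_{a}$-component containing $\psi(u)$ is determined: it is the unique $a$-component containing $\psi(u)$, since each vertex lies in exactly one $a$-component by the decomposition \eqref{decomposition sommets}.
\item Inside that component, $\Phi$ is injective, so the values $\Phi\circ\psi$ determine $\psi$ on all of $T$.
\end{itemize}
Because $\GCC(g)$ is a tree, hence connected, this propagation reaches every vertex of $g$.

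\textbf{Surjectivity.} Take $\phi:V\to[N]$ injective on each colored component. I build $\psi$ as follows.
\begin{itemize}
\item Set $\psi(v_{in})=\rho$. This forces $\phi(v_{in})=\rho$; I read the statement with that convention, which is implicit from the rooted setting.
\item For a colored component $T$ of color $a$ attached through an already placed connector $u$ with $\psi(u)=w$, let $R$ be the unique $a$-component containing $w$.
\item For each vertex $t\in T$, define $\psi(t)$ as the unique vertex of $R$ with last index $\phi(t)$. Such a vertex exists because $R$ contains one vertex with each last index in $[N]$.
\end{itemize}
This is well defined because $\GCC(g)$ is a tree: each colored component other than the root ones is reached through exactly one parent connector, so no vertex is assigned twice with conflicting values. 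Edges and labels are preserved, so $\psi$ is a labeled graph morphism, and $\Phi\circ\psi=\phi$ by construction.

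\textbf{Main obstacle: injectivity of the constructed $\psi$ across different components.} Within a single colored component, injectivity follows from the injectivity of $\phi$ there. The difficulty is two vertices $t,t'$ in different colored components. Here I will exploit the tree structure of $G_{N}$, namely that its $\GCC$ is a tree, as shown in the proof of the freeness proposition.

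I consider the paths from the root to $t$ and to $t'$ in $\GCC(g)$. Because $\GCC(g)$ is a tree, any branching away from a common connector moves into distinct child components, and $\psi$ maps distinct child components of a connector to distinct components of $G_{N}$ (one $a$-component and one $b$-component at each vertex of $G_{N}$). The path of components is thereby mapped to a non-backtracking path in $\GCC(G_{N})$.

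The step needing care is that two distinct components of $g$ of the same color, both attached to the same connector, cannot occur. They would be a single colored component, since they share that vertex. Given this, the images of $t$ and $t'$ lie in different components of the tree $\GCC(G_{N})$, or coincide only at a shared connector, which would force $t=t'$. Hence $\psi$ is injective, which completes the bijection.
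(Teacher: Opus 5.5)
Your proof is correct and follows the paper's route. The forward map $\psi\mapsto\Phi\circ\psi$ is well defined because each colored component lands in a single block $R_a(v)$ or $R_b(w)$ on which $\Phi$ is injective. The inverse is then built by propagating from the root along the tree $\GCC(g)$; your block-by-block construction yields exactly the paper's explicit tuples $\psi(v)=(\phi(v_1),\dots,\phi(v_{h(v)}),c(v))$. The main differences are these: you explicitly verify injectivity of $\psi\mapsto\Phi\circ\psi$ and the root convention $\phi(v_{in})=\rho$, both of which the paper leaves implicit. You also prove injectivity of the constructed morphism through non-backtracking walks in the tree $\GCC(G_N)$, whereas the paper compares the tuples entry by entry from the root, which avoids needing the tree structure of $\GCC(G_N)$ as an input.
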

\begin{proof}
    Let $\phi:V\rightarrow V_{N}$ be an injective graph morphism. The map $\Phi\circ\phi:V\rightarrow[N]$ is injective on each colored component of $g$. Indeed, let $S$ be a colored component, say of color $a$, of $g$. Since $\phi$ is an injective morphism, its image via $\phi$ is a subgraph of some $R_{a}(v)=\{(j_{1},\cdots,j_{n},b)\}\sqcup\{(j_{1},\cdots,j_{n+1},a),\,j_{n+1}\in[N]\setminus\{j_{n}\}\}$ (see Section 3). Since $\Phi$ is injective on $R_{a}(v)$, $\Phi\circ\phi$ is injective on $S$.

    Let now $\psi:V\rightarrow[N]$ be injective on each colored component of $g$ and denote $\CC\CC(g)$ the set of colored components of $g$. We define a function $h:\CC\CC(g)\rightarrow\N_{>0}$ which is a slight deformation of the graph-distance on $\GCC(g)$ as follows,
    \begin{equation}
        h(S)=\left\{
        \begin{split}
            \frac{1}{2}\left(d_{\GCC(g)}(S,v_{in})+1\right)\quad &\text{if }v_{in}\in \mathcal{V}_{co},\\
            \frac{1}{2}d_{\GCC(g)}(S_{in},S)+1,\quad&\text{if }v_{in}\in \overset{\circ}{S_{in}},
        \end{split}
        \right.
    \end{equation}
    where $\overset{\circ}{S_{in}}$ is the set of vertices of $S_{in}$ that are not connectors. Note that $h(S)$ is indeed a natural number and that if $v_{in}\in S$, $h(S)=1$. Moreover, $h$ induces a function, still denoted $h$ on $V$, by taking $h(v_{in})=0$ and $h(v)=\min\{h(S),S\ni v\}$ otherwise. Finally, for $v\neq v_{in}$ denote $c(v)\in \{a,b\}$, the color of the colored component that realizes the minimum, i.e. the color that gets $v$ closer to $v_{in}$. Since $\GCC(g)$ is a tree, for any vertex $v\neq v_{in}$, there exists a unique path $v_{in}=v_{0},v_{1},\cdots,v_{h(v)-1},v_{h(v)}=v$ such that for all $1\leq i \leq h(v)$, the vertices $v_{i-1}$ and $v_{i}$ belong to the same colored component, they are all connectors except possibly $v_{in}$ and $v$ and $h(v_{i})=i$. Denote $(j_{1},\cdots,j_{h(v)})=(\psi(v_{1}),\dots,\psi(v_{h(v)}))$ and define $\phi : V\rightarrow V_{N}$ by $\phi(v_{in})=\rho$ and $\phi(v)=(j_{1},\dots,j_{h(v)},c(v))$. This is well defined since $\psi$ is injective on each colored component and $v_{i-1},v_{i}$ belong to the same colored component and are two distinct vertices, we have $j_{i-1}\neq j_{i}$. Moreover, $\phi$ induces a graph morphism. Indeed, for any edge $e=(v,w)$, either $v=w$ and it is clear that $(\phi(v),\phi(w))$ is a n edge of $G_{N}$, either $v\neq w$. In that last case, since they are in the same colored component, $\phi(v)$ and $\phi(w)$ are both in the same colored component in $G_{N}$, hence $(\phi(v),\phi(w))$ is indeed an edge of $G_{N}$. Finally $\phi$ is injective : if $(\psi(v_{1}),\dots,\psi(v),c(v))=(\psi(w_{1}),\cdots,\psi(w),c(w))$, then $v$ and $w$ are in the same colored component and using the injectivity of $\psi$ on this colored component, we recursively show that $v=w$.
\end{proof}

Recall that a partition $\sigma$ of the vertices of a graph monomial $g$ is said to be an amalgamation between the colored components, written $\sigma\in P_{\#}(\CC\CC(g))$, if the blocs of $\sigma$ contain at most one vertex from each colored components. In other words, it may identify vertices from different colored components but never two vertices of the same one. We denote $1$ the special amalgamation consisting in only singletons. For any partition of the vertices of a graph monomial, recall that we denote $g^{\sigma}$ the graph monomial obtained after identifying vertices in a same block.

\begin{lemma}\label{lemma : trace injective a et b}
    Let $A$ and $B$ be random matrices and $a,b$ the associated operators constructed in Section 3. Let $g$ be a graph monomial labeled in two variables such that its $\GCC$ is a tree. We have
    \begin{equation}
        \tr^{0}_{\rho}(g(a,b))=\tr^{0}_{\rho}(g(A,B))+\sum_{\substack{\sigma\in P_{\#}(\CC\CC(g)),\\\sigma\neq 1}}\tr_{\rho}^{0}(g^{\sigma}(A,B)).
    \end{equation}
\end{lemma}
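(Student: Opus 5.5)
We will compare the two sides as sums over vertex assignments. On the left, $\tr^{0}_{\rho}(g(a,b))$ is a sum over injective labeled graph morphisms $\phi:V\to V_{N}$ with $\phi(v_{in})=\rho$. Each such $\phi$ contributes $\prod_{e}\langle x^{\gamma(e)}\delta_{\phi(\mathrm{tar}(e))},\delta_{\phi(\mathrm{src}(e))}\rangle$. By construction of $a$ and $b$ in Section 3, these weights depend only on the last indices of the vertices, so each weight equals $X^{\gamma(e)}(\Phi\circ\phi(e))$. Hence, by Lemma \ref{injection petit graphe gros graphe}, which applies since $\GCC(g)$ is a tree, we can rewrite the left-hand side as
\begin{equation*}
\tr^{0}_{\rho}(g(a,b))=\sum_{\substack{\psi:V\to[N],\ \psi(v_{in})=\rho,\\ \psi \text{ injective on each colored component}}}\ \prod_{e\in E}X^{\gamma(e)}(\psi(e)).
\end{equation*}
Two points must be checked here. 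First, the bijection of that lemma really is $\phi\mapsto\Phi\circ\phi$. Second, it respects the root constraint. Indeed, $\phi(v_{in})=\rho$ forces $\psi(v_{in})=\rho$, and conversely the inverse construction sends $v_{in}$ to $\rho$.

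Next, we decompose this sum according to the kernel of $\psi$. Define the partition $\sigma=\ker\psi$ of $V$ by $v\overset{\sigma}{\sim}w$ iff $\psi(v)=\psi(w)$. The map $\psi$ is injective on each colored component exactly when no block of $\sigma$ contains two distinct vertices of the same colored component, that is, when $\sigma\in P_{\#}(\CC\CC(g))$. For a fixed $\sigma$, the maps $\psi$ with kernel exactly $\sigma$ correspond bijectively to injective maps $V^{\sigma}\to[N]$ sending the class of $v_{in}$ to $\rho$. The product over edges of $g$ becomes the product over edges of $g^{\sigma}$. This gives the term $\tr^{0}_{\rho}(g^{\sigma}(A,B))$, where the rooted injective trace is with respect to the matrices, i.e.\ the complete graph on $[N]$. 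Summing over $\sigma$ and separating $\sigma=1$, which gives $\tr^{0}_{\rho}(g(A,B))$, yields the formula.

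The main obstacle is the careful verification of the first step. It requires the identification $\langle a\delta_{v_1},\delta_{v_2}\rangle=A(j(v_2),j(v_1))$ for every edge of $G_{N}$ in the image of $\phi$. It also requires checking that $\phi$ lands only on genuine edges within a single component $R_a(v)$ or $R_b(w)$, so that no weight is lost or doubled, including for loops. A further subtle point is the orientation convention: the product uses $\langle a\delta_{\mathrm{tar}},\delta_{\mathrm{src}}\rangle$, and this must match $A(\mathrm{tar},\mathrm{src})$ up to the Hermitian symmetry. We will handle this by noting that $a$ and $b$ are self-adjoint, so the orientation is immaterial when $\varepsilon$ is omitted. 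Once the bijection of Lemma \ref{injection petit graphe gros graphe} is granted, the remaining partition bookkeeping is routine.
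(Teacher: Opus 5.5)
Your proposal is correct and follows the paper's proof step for step: express the weights of $a,b$ through the last-index map $\Phi$, transport the sum along the bijection of Lemma \ref{injection petit graphe gros graphe} to maps $V\to[N]$ that are injective on each colored component and send $v_{in}$ to $\rho$, then split by kernel, which ranges exactly over $P_{\#}(\CC\CC(g))$, and isolate $\sigma=1$. One small correction to your side remark on orientation: for complex Hermitian $A$, self-adjointness only gives $\langle a\delta_{v},\delta_{w}\rangle=\overline{\langle a\delta_{w},\delta_{v}\rangle}$, so reversing an edge conjugates its weight and orientation is not immaterial; instead, read the operator-side weight of $e$ as the $(\phi(\mathrm{tar}(e)),\phi(\mathrm{src}(e)))$ matrix entry of $a$, as the paper intends when it says the two evaluations coincide on the complete graph, and this equals $A(\Phi\phi(\mathrm{tar}(e)),\Phi\phi(\mathrm{src}(e)))$ directly by the definition of $a$, with no symmetry needed.
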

\begin{proof}
    With the definition of $\Phi$ and recalling that the operators $a$ and $b$ are defined through the last index of the vertex, we have
    \begin{equation*}
        \tr^{0}_{\rho}(g(a,b))=\sum_{\substack{\phi:V\rightarrow V_{N},\\\text{ injective graph morphism},\\\phi(v_{in})=\rho}}
        \prod_{e\in E}X^{\gamma(e)}(\Phi(\phi(e))),
    \end{equation*}
    where $X^{a}=A$ and $X^{b}=B$. Using Lemma \ref{injection petit graphe gros graphe} we have
    \begin{equation*}
        \tr^{0}_{\rho}(g(a,b))=\sum_{\phi}
        \prod_{e\in E}X^{\gamma(e)}(\phi(e)),
    \end{equation*}
    where the sum over $\phi$ now runs through all maps $\phi:V\rightarrow[N]$ that are injective on each colored component and such that $\phi(v_{in})=\rho$.
    
    A map $\phi:V\rightarrow[N]$ injective on each colored component can be seen as an injective map $\psi:V^{\sigma}\rightarrow[N]$ where $\sigma(i)\sim\sigma(j)\Leftrightarrow\phi(i)=\phi(j)$, is an amalgamation of the colored components of $g$. Hence
    \begin{align*}
        \tr^{0}_{\rho}(g(a,b))&=\sum_{\sigma\in P_{\#}(\CC\CC(g))}\sum_{\substack{\psi:V^{\sigma}\rightarrow[N],\\\text{injective},\\\psi(v_{in})=\rho}}
        \prod_{e\in E}X^{\gamma(e)}(\phi(e)),\\
        &=\sum_{\sigma\in P_{\#}(\CC\CC(g))}\tr_{\rho}^{0}(g^{\sigma}(A,B))
    \end{align*}
    
    The special case $\sigma = 1 = \{\{v_{1}\},\cdots,\{v_{m}\}\}$ does not change $g$ and we get the result.
   
\end{proof}

\begin{corollary}\label{cor : comparaison moment a+b et A+B}
    Let $m\leq cM$, let $A$ and $B$ be two matrices that satisfy Assumption \ref{H} at speed $M$, then
    \begin{equation}\label{eq : comparaison moment a+b et A+B}
        \langle(a+b)^{m}\delta_{\rho},\delta_{\rho}\rangle-(A+B)^{m}_{\rho,\rho}=O(2^{m}m^{2m}N^{c(h_{1}+h_{2})-1}).
    \end{equation}
\end{corollary}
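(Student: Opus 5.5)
We expand both moments as sums over graph monomials and compare them term by term. Expanding $(a+b)^m$ and $(A+B)^m$ gives $2^m$ words in $\{a,b\}$, and each word $w$ corresponds to a cycle graph monomial $g_w$ of length $m$ rooted at $v_{in}=v_{out}$ with edges labeled by the letters of $w$. We interpret the statement in expectation (the traffic distributions $\tau_\rho$), and the claim reduces to showing, for each word,
\begin{equation*}
\tau_\rho(g_w(a,b))-\tau_\rho(g_w(A,B))=O\big(m^{2m}N^{c(h_1+h_2)-1}\big).
\end{equation*}
Summing over the $2^m$ words then gives the stated bound. Decomposing by the default of injectivity, we write
\begin{equation*}
\tau_\rho(g_w(a,b))=\sum_{\pi\in P(V)}\tau^0_\rho(g_w^\pi(a,b)),\qquad \tau_\rho(g_w(A,B))=\sum_{\pi\in P(V)}\tau^0_\rho(g_w^\pi(A,B)).
\end{equation*}

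For the operator side we use the remark preceding Lemma \ref{injection petit graphe gros graphe}: if $\GCC(g_w^\pi)$ is not a tree, there is no injective graph morphism into $G_N$, so $\tau^0_\rho(g_w^\pi(a,b))=0$. If $\GCC(g_w^\pi)$ is a tree, Lemma \ref{lemma : trace injective a et b} writes $\tau^0_\rho(g_w^\pi(a,b))$ as $\sum_{\sigma\in P_\#(\CC\CC(g_w^\pi))}\tau^0_\rho((g_w^{\pi})^\sigma(A,B))$. The key combinatorial observation is that $(g_w^\pi)^\sigma=g_w^{\pi'}$ for a coarser partition $\pi'$. Thus the operator side equals a sum of terms $\tau^0_\rho(g_w^{\pi'}(A,B))$ indexed by pairs $(\pi,\sigma)$ with $\GCC(g_w^\pi)$ a tree.

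The pair $(\pi,1)$ with $\GCC(g_w^\pi)$ a tree cancels exactly the matching term $\pi'=\pi$ on the matrix side. The difference is therefore a signed sum of at most $2|P(V)|^2$ terms, of two kinds. The first kind is $\tau^0_\rho(g_w^\pi(A,B))$ with $\GCC(g_w^\pi)$ not a tree. The second kind is $\tau^0_\rho((g_w^\pi)^\sigma(A,B))$ with $\sigma\neq 1$ an amalgamation of colored components of a graph whose $\GCC$ is a tree.

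In the first case $\eta(\GCC)\le -1$, and Lemma \ref{lemma : injective trace A et B} together with Assumption \ref{H} (applied to $g_a,g_b$, which have at most $cM$ vertices and edges) gives a bound $O(N^{c(h_1+h_2)}N^{-1})$. In the second case we must show that a nontrivial amalgamation $\sigma$, identifying vertices from distinct colored components, strictly decreases $\eta(\GCC)$. This is the main obstacle. An identification of two vertices lying in distinct colored components of a graph with a tree $\GCC$ either creates a cycle in the $\GCC$ or merges components, and in either case $\eta$ drops by at least one. We would prove this by tracking $|\mathcal V_{cc}|-|\mathcal V_{co}|$ under one identification, in the spirit of the case analysis of Lemma \ref{GCC pas arbre}, and then iterate. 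Granting this, Lemma \ref{lemma : injective trace A et B} again gives $O(N^{c(h_1+h_2)-1})$ per term.

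Finally we count terms. The number of partitions of $m$ vertices is at most $m^m$, and amalgamations are also partitions, so there are at most $m^{2m}$ pairs $(\pi,\sigma)$. The $m^2$ factor from Lemma \ref{lemma : injective trace A et B} is absorbed because it multiplies only the lower-order error. This yields $O(m^{2m}N^{c(h_1+h_2)-1})$ per word and $O(2^m m^{2m}N^{c(h_1+h_2)-1})$ overall.
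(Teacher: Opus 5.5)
Your overall argument is the paper's: expand into the $2^m$ labelled cycles, decompose by partitions, use that a non-tree $\GCC$ admits no injective morphism into $G_N$, apply Lemma~\ref{lemma : trace injective a et b}, cancel the $\sigma=1$ terms against the matrix side, and bound the at most $m^{2m}$ remaining terms by $N^{c(h_1+h_2)-1}$ via Lemma~\ref{lemma : injective trace A et B}. The paper only asserts that every remaining graph has a non-tree $\GCC$, so you are right to isolate the claim that $\GCC(h^\sigma)$ is not a tree when $\GCC(h)$ is a tree and $\sigma\neq 1$ is an amalgamation of colored components.

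\textbf{The gap: your proposed justification of this claim fails as stated.} Your one-step statement is correct: identifying two vertices with no common colored component, in a graph whose $\GCC$ is a tree, lowers $\eta$ by exactly one. But it cannot be iterated. After one identification the $\GCC$ is no longer a tree, and later pairs of $\sigma$ may already lie in a common (merged) component; then $\eta$ can go \emph{up}.

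Here is a counterexample. Let $\GCC(h)$ be the path $S_1,u,T_1,w,S_2,v,T_2$, where the $S_i$ have color $a$ and the $T_i$ color $b$. Realize it with $S_1,T_2$ double edges and $T_1,S_2$ triangles, so $h$ is Eulerian and hence a quotient of a cycle. Let $\sigma$ have blocks $\{x,y\},\{x',y'\},\{u,v\}$, with $x\in S_1$, $y\in S_2$, $x'\in T_1$, $y'\in T_2$ non-connectors. This is a valid amalgamation. Identifying the blocks in this order gives $\eta=0,-1,-2,-1$. The last step glues two connectors that already lie in the same $a$- and the same $b$-component, which destroys a $4$-cycle of the $\GCC$. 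So no stepwise monotonicity argument of this kind controls the final value.

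\textbf{A global argument does work.} The quotient map $q:h\to h^\sigma$ sends each colored component injectively into a component of the same color, and connectors to connectors. It therefore induces a homomorphism $\GCC(h)\to\GCC(h^\sigma)$. This homomorphism is locally injective: at a component vertex because $q$ is injective on that component, and at a connector because its two neighbours have different colors. If $\GCC(h^\sigma)$ were a tree, the homomorphism would be injective, since non-backtracking paths would map to non-backtracking walks in a tree, i.e.\ paths.

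Now take $p\neq p'$ in one block of $\sigma$.
\begin{enumerate}
\item If some component $S\ni p$ and some $S'\ni p'$ have the same color, then $S\neq S'$ by the amalgamation condition. Yet $q(S)=q(S')$, since both contain $q(p)$. This contradicts injectivity.
\item Otherwise $p$ is a non-connector of an $a$-component $S$ and $p'$ a non-connector of a $b$-component $T$, or vice versa. Then $q(p)$ is a connector adjacent to $q(S)$ and $q(T)$. It is not the image of any connector of $h$, since that would put two distinct vertices of $S$ in one block. Meanwhile $q(S)$ and $q(T)$ are already joined inside the connected image of $\GCC(h)$, which creates a cycle.
\end{enumerate}
Hence $\eta(\GCC(h^\sigma))\le -1$ for every $\sigma\neq 1$. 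With this replacement, the rest of your proof goes through.
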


\begin{proof}
    For $f:[m]\rightarrow\{a,b\}$, call $g_{f}$ the graph monomial which is a directed cycle whose edges are labeled by $f$ along the cycle. Expanding the moment by multi-linearity on the edges, we obtain
    \begin{align*}
        \langle(a+b)^{m}\delta_{\rho},\delta_{\rho}\rangle&=\sum_{f:[m]\rightarrow\{a,b\}}\sum_{\substack{\pi\in P(g_{f}),\\\GCC(g_{f}^{\pi})=\text{tree}}}\mathrm{Tr}_{\rho}^{0}(g_{f}^{\pi}(a,b)),\\
        &=\sum_{f:[m]\rightarrow\{a,b\}}\sum_{\substack{\pi\in P(g_{f}),\\\GCC(g_{f}^{\pi})=\text{tree}}}[\mathrm{Tr}_{\rho}^{0}(g_{f}^{\pi}(A,B)))+\sum_{\substack{\sigma\in P_{\#}(\CC\CC(g_{f}^{\pi})),\\\sigma\neq 1}}\tr_{\rho}^{0}((g_{f}^{\pi})^{\sigma}(A,B))].\\
    \end{align*}
    Hence, the left-hand side on Equation \eqref{eq : comparaison moment a+b et A+B} is equal to
    \begin{align*}
        \sum_{f:[m]\rightarrow\{a,b\}}\left(\sum_{\substack{\pi\in P(g_{f}),\\\GCC(g_{f}^{\pi})=\text{tree}}}\sum_{\substack{\sigma\in P_{\#}(\CC\CC(g_{f}^{\pi})),\\\sigma\neq 1}}\tr_{\rho}^{0}((g_{f}^{\pi})^{\sigma}(A,B))-\sum_{\substack{\pi\in P(g_{f}),\\\GCC(g_{f}^{\pi})\neq\text{tree}}}\tr_{\rho}^{0}(g_{f}^{\pi}(A,B))\right).
    \end{align*}
    Bound now each injective trace by $N^{c(h_{1}+h_{2})}N^{\eta}$ using Assumption \ref{H} and Lemma \ref{lemma : injective trace A et B}. Since we are only summing over partitions that have their graph of colored components that are not trees, we have $\eta\leq -1$ for each term. Bounding the number of partition we are summing over by $m^{m}$, we obtain Equation \eqref{eq : comparaison moment a+b et A+B}.
\end{proof}
    \subsection{Proof of Theorem \ref{thm A}}

Throughout the proof, we write $G_{A+B}(z):=G_{A+B}(zI)$ and similarly for $G_{a+b}$.
    Let $(\A,\D_{N}(\C),E)$, be a $C^{*}$-probability space, let $X\in\A$ be self-adjoint and let $z\in \C$ be a complex number with strictly positive imaginary part. We choose $M$ the minimum speed at which the matrices $A$ and $B$ satisfy Assumptions \ref{H} and \ref{assumption frobenius} and set $c>0$ to be chosen later. We have the identity
    \begin{equation}
        G_{X}(z)=E((z-X)^{-1})=\sum_{n=0}^{cM-1}\frac{1}{z^{n+1}}E(X^{n})+\frac{1}{z^{cM}}E\left(X^{cM}(z-X)^{-1}\right).
    \end{equation}
    We call $I_{X}^{M}(z)$ the first term and $R_{X}^{M}(z)$ the second on the right-hand side of the above equality. We apply this result to $X=A+B$ with $E=\Delta$ and to $X=a+b$ with $E=E_{\D_{N}(\C)}$ defined in Section 2. 
    We first expand the norm of $I^{M}(z):=I^{M}_{a+b}(z)-I^{M}_{A+B}(z)$ as follows
    \begin{equation}\label{eq : somme partielle etape 1}
        \begin{split}
            ||I^{M}(z)||_{F}^{2}=\sum_{\rho=1}^{N}\sum_{n,m=0}^{cM-1}&\frac{1}{z^{m+1}\bar z ^{n+1}}(\langle g_{m}(a+b)\delta_{\rho},\delta_{\rho}\rangle-g_{m}(A+B)_{\rho,\rho})\times\\
            &(\langle g_{n}(a+b)\delta_{\rho},\delta_{\rho}\rangle-g_{n}(A+B)_{\rho,\rho}),
        \end{split}
    \end{equation}
    where $g_{m}$ denotes an oriented cycle of length $m$ labeled by a single variable. Hence $\tr g_{m}(X)=\tr X^{m}$. We reproduce the proof of Corollary \ref{cor : comparaison moment a+b et A+B} to obtain
    \begin{equation}
        \begin{split}
            \langle g_{m}(a+b)\delta_{\rho},\delta_{\rho}\rangle-g_{m}(A+B)_{\rho,\rho}=&\sum_{f_{1}:[m]\rightarrow\{a,b\}}\sum_{\substack{\pi_{1}\in P(g_{f_{1}}),\\\GCC(g_{f_{1}}^{\pi_{1}})=\text{tree}}}\sum_{\substack{\sigma_1\in P_{\#}(\CC\CC(g_{f_1}^{\pi_1})),\\\sigma_1\neq 1}}\tr_{\rho}^{0}((g_{f_1}^{\pi_1})^{\sigma_1}(A,B))\\
            &-\sum_{\substack{\pi_1\in P(g_{f_1}),\\\GCC(g_{f_1}^{\pi_1})\neq\text{tree}}}\tr_{\rho}^{0}(g_{f_1}^{\pi_1}(A,B)),
        \end{split}  
    \end{equation}
    and similarly for the $n$-th moment replacing all 1 subscripts by 2's.
    Note that for any $\pi_{1},\sigma_{1}$ in the last sums, the graph of colored components of $(g_{f_{1}}^{\pi_{1}})^{\sigma_{1}}$ is not a tree, hence $\eta(\GCC((g_{f_{1}}^{\pi_{1}})^{\sigma_{1}}))\leq -1$. Hence when expanding the product in Equation \eqref{eq : somme partielle etape 1}, we will have a sum of expressions of the form
    \begin{align*}
        \tr_{\rho}^{0}(h(A,B))\tr_{\rho}^{0}(h'(A,B)),
    \end{align*}
    for some graph monomials $h,h'$ that have their graph of colored components which is not a tree and have the same input and output vertex. We express this product as a sum of injective trace of some modified graph monomial. Set $h\cdot h'$ the graph monomial obtained after identifying the input vertex of $h$ with that of $h'$, so that we have
    \begin{align*}
        \tr_{\rho}^{0}(h(A,B))\tr_{\rho}^{0}(h'(A,B))=\sum_{\sigma\in P_{\#}(h,h')}\tr^{0}_{\rho}((h\cdot h')^{\sigma}(A,B)).
    \end{align*}
    Using Lemma \ref{GCC pas arbre}, we know that $\eta(\GCC((h\cdot h')^{\sigma}))\leq -1$. We use Lemma \ref{lemma : injective trace A et B} to bound the expectation of each injective trace that appear in the sums by $N^{c(h_{1}+h_{2})-1}$. Finally, we bound the number of partitions of a set with $k$ elements by $k^{k}$ and we obtain
    \begin{align*}
        \E||I^{M}(z)||_{F}^{2}&\leq\sum_{\rho=1}^{N}\sum_{n,m=0}^{cM-1}\sum_{\substack{f_{1}:[m]\rightarrow\{a,b\},\\f_{2}[n]\rightarrow\{a,b\}}}\frac{1}{|z|^{n+m+2}}n^{2n}m^{2m}(n+m)^{n+m}N^{c(h_{1}+h_{2})-1},\\
        &\leq \frac{N}{|z|^{2}}\left(\sum_{m=0}^{cM-1}\left(\frac{2(cM)^{3}}{|z|}\right)^{m}\right)^{2}N^{c(h_{1}+h_{2})-1},\\
        &\leq \frac{4N}{|z|^{2}}\left(\frac{2c^{3}}{|z|}\right)^{2cM}M^{6cM}N^{c(h_{1}+h_{2})-1},
    \end{align*}
    where for the last line, we bounded each term in the sum by $\left(\frac{2(cM)^{3}}{|z|}\right)^{cM-1}$ which is valid for $M$ large enough, hence $N$ large enough. Using $M^{6cM}\leq N^{6c}$, we have for $|z|\geq 2c^{3}$,
    \begin{equation}\label{eq : borne somme partielle}
        \E\frac{1}{N}||I^{M}(z)||_{F}^{2}\leq\frac{1}{|z|^{2}} N^{c(h_{1}+h_{2}+6)-1}e^{2cM\log(2c^{3}/|z|)}.
    \end{equation}

We now bound the norm of $R^{M}_{A+B}(z)$. Recall that the Fröbenius norm is the Schatten-2 norm and that the operator norm is the Schatten-$\infty$ norm. Hölder's inequality on Schatten norms yields
    \begin{align*}
        ||R_{A+B}^{M}(z)||_{F}\leq\frac{1}{|z|^{cM}}||(A+B)^{cM}||_{F}||(z-A-B)^{-1}||_{op}.
    \end{align*}
Since $A$ and $B$ are Hermitian, denoting $||X||_{p}:=\mathrm{Tr}((XX^{*})^{p})^{1/2p}$ the Schatten $p$-norm of a matrix $X$ and using Minkowski's inequality, then Hölder's inequality, we have
\begin{align*}
    ||(A+B)^{cM}||^{2}_{F}=||A+B||_{cM}^{2cM}&\leq(||A||_{cM}+||B||_{cM})^{2cM},\\
    &\leq\left(||A^{cM}||_{F}^{1/cM}+||B^{cM}||_{F}^{1/cM}\right)^{2cM},\\
    &\leq 2^{2cM-1}(||A^{cM}||^{2}_{F}+||B^{cM}||^{2}_{F}).
\end{align*}
Since $A$ and $B$ satisfy Assumption \ref{assumption frobenius} at speed $cM$ with constant $C$, we finally obtain
\begin{equation}\label{eq : borne reste A+B}
    \E \frac{1}{N}||R_{A+B}^{M}(z)||_{F}^{2}\leq\left(\frac{4C}{|z|^{2}}\right)^{cM}\frac{1}{|\Im(z)|^{2}}.
\end{equation}

To estimate $R_{a+b}^{M}(z)$, recall from the end of Section 3 that $\varphi:\A\rightarrow\C,\,T\mapsto \frac{1}{N}\sum_{v\in R_{a}(x)}\langle T\delta_{v},\delta_{v}\rangle$, is positive and makes $(\A,\varphi)$ into a $C^{*}$-probability space. Moreover, setting $x:=(z-a-b)^{-1}(a+b)^{cM}$ we can write
\begin{align*}
    \frac{1}{N}||R_{a+b}^{M}(z)||_{F}^{2}=\frac{1}{|z|^{2cM}}\varphi(E_{\D_{N}(\C)}(x)E_{\D_{N}(\C)}(x)^{*})\leq\frac{1}{|z|^{2cM}}\varphi(xx^{*}),
\end{align*}
where we used that $E_{\D_{N}(\C)}$ is a $*$-homomorphism and is also a projection: one may compute explicitly $\langle x\delta_{v},x\delta_{v}\rangle = \langle (x-p(x)+p(x))\delta_{v},(x-p(x)+p(x))\delta_{v}\rangle $ for any $*$-homomorphism $p$ which is a also a projection to obtain the inequality above. Now, since $a+b$ is self-adjoint, there exists a compactly supported measure $\mu$ such that 
\begin{align*}
    \varphi(xx^{*})=\int(z-t)^{-1}t^{2cM}(\bar z -t)^{-1}\dd\mu (t)\leq \frac{1}{|\Im(z)|^{2}}\frac{1}{N}\sum_{\rho=1}^{N}\langle(a+b)^{2cM}\delta_{\rho},\delta_{\rho}\rangle.
\end{align*}
Using Corollary \ref{cor : comparaison moment a+b et A+B}, we have,
\begin{equation}\label{eq : bound reste a+b}
    \begin{split}
        \E\frac{1}{N}||R_{a+b}^{M}(z)||_{F}^{2}&\leq \frac{1}{|z|^{2cM}|\Im(z)|^{2}}\left(\frac{1}{N}||(A+B)^{cM}||_{F}^{2}+O(2^{cM}(cM)^{2cM}N^{c(h_{1}+h_{2})-1})\right),\\
        &\leq \left(\frac{1}{|z|^{2}}\right)^{cM}\left(\frac{(4C)^{cM}}{|\Im(z)|^{2}}+(2c^{2})^{cM}N^{c(h_{1}+h_{2}+2)-1}\right),
    \end{split}
\end{equation}
where we used for the last line the fact that $A$ and $B$ satisfy Assumption \ref{assumption frobenius} at speed $M$ with constant $C$ and that $M^{2M}\leq N^{2c}$.

Putting everything together, for $0< c < \frac{1}{h_{1}+h_{2}+6}$ - this ensures that the terms with $N^{c(h_{1}+h_{2}+p)-1}$ for $p=2$ (Equation \eqref{eq : bound reste a+b}) and $p=6$ (Equation \eqref{eq : borne somme partielle}) go to zero faster than the terms in $\alpha^{M}$ for some constant $0<\alpha<1$ (Equations \eqref{eq : bound reste a+b},\eqref{eq : borne reste A+B})-  and $|z|^{2}\geq \max\{4C, 2c^{2},2c^{3}\}=\max\{4C, 2c^{2}\}=:r_{0}^{2}$, we obtain
\begin{equation}
    \E\frac{1}{N}||G_{a+b}(z)-G_{A+B}(z)||_{F}^{2}\leq \frac{4}{|z|^{2}}\left(\frac{r_{0}}{|z|}\right)^{2cM}
\end{equation}
    
    \subsection{Proof of Theorem \ref{thm B}}
    Note that Theorem \ref{thm A} already includes sparse bounded matrices. We improve the bound with a slightly different proof. The proof still relies on a moment method but we now show that the moments of $A+B$ and the moments of $a+b$ coincide up to order $\log N$ with high probability.

\paragraph{Graph-related definitions}
Given a graph $G=(V,E)$, we define, for $x,y\in V$, $d_{G}(x,y)$ to be the length of the shortest path from $x$ to $y$, where the length of a path is its number of edges. In the following, if the graph $G$ is $A+B$ we will simply denote its distance by $d$. Let $x\in V$, $k\in\N$ and denote
\begin{align*}
    B_{G}(x,k)&:=\{y\in V, d_{G}(x,y)\leq k\},\quad\quad B_{G}(k):=\max_{x\in V}\{|B_{G}(x,k)|\}\\
    C_{G}(x,k)&:=\{y\in V, d_{G}(x,y)=k\},\quad\quad C_{G}(k):=\max_{x\in V}\{|C_{G}(x,k)|\}.
\end{align*}

Note that for $C$-sparse matrices, we always have 
\begin{equation}\label{bound on the size of the ball}
    |B_{A}(x,n)|\leq C^{n}.
\end{equation}

For $G=A+B$, for all $x\in V=[N]$, we denote
\begin{equation*}
    R(x):=\max\{k\in\N,\, \mathcal{GCC}(B(x,k))\text{ is a tree}\}.
\end{equation*}
Finally, we set 
\begin{equation*}
    F_{N}(n):=\{x\in[N], R(x)\geq n \}.
\end{equation*}
The quantity $R(x)$ has to be thought of as the girth of $x$ with respect to the colored components: it is the maximal radius such that, locally, the ball around $x$ satisfies the condition that its graph of colored components is a tree. For the graph associated to $a+b$ constructed in Section 3, the girth is infinite for all vertices since the graph of colored component is a tree.

\begin{lemma}\label{Markov bound}
    Let $A_{N}$ and $B_{N}$ be $C$-sparse matrices, we have 
    \begin{equation}
        \E\left(|F_{N}(\kappa\log N)|\right)\leq(eC^{2})^{n}.
    \end{equation}
\end{lemma}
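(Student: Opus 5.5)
We read the statement as a bound on the expected number of \emph{bad} vertices, that is, those whose colored-component girth is small: with $n=\kappa\log N$ we aim to show $\E|[N]\setminus F_{N}(n)|\leq (eC^{2})^{n}$. Choosing $\kappa$ small then makes this $N^{1-\delta}$, which is presumably how the lemma is used. The plan is a first-moment count. By permutation invariance we may replace $B$ by $VBV^{-1}$ with $V$ a uniform permutation matrix independent of $(A,B)$. We condition on $A$ and $B$, so that all randomness comes from $\sigma$. Linearity then gives
\[\E|[N]\setminus F_{N}(n)|=\sum_{x}\P(R(x)<n).\]
By symmetry in $x$, it suffices to bound $\P(R(x)<n)$ by roughly $(eC^{2})^{n}/N$.

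\textbf{Step 1 (combinatorial reduction).} If $\GCC(B(x,n))$ is not a tree, we extract a witness from a cycle in it. The witness is a cyclic sequence of distinct connectors $u_{1},\dots,u_{k}$ with $k\geq2$, where consecutive connectors $u_{i},u_{i+1}$ lie in a common colored component, the colors alternate (or consecutive components of the same color are distinct), and all $u_{i}$ lie in $B(x,n)$. Joining consecutive connectors by geodesics inside their component yields a closed non-backtracking walk of length $O(n)$ from (a vertex near) $x$, alternating between $A$-segments and $B$-segments. The number of such walk shapes is bounded using sparsity and \eqref{bound on the size of the ball}: each vertex has at most $C$ neighbours in $A+B$, so there are at most $C^{\ell}$ walks of length $\ell$ with a prescribed color pattern. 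Summing the $2^{\ell}$ color patterns over $\ell\leq 2n$ gives at most roughly $(2C)^{2n}$ candidate shapes.

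\textbf{Step 2 (probabilistic closing).} We explore $B(x,n)$ breadth first, revealing $\sigma$ only on the $B$-endpoints encountered. A cycle in the GCC forces at least one $B$-segment to end, under the random relabeling $\sigma$, at a vertex that was already discovered through an earlier component, rather than at a fresh vertex. At each revelation step, the probability of hitting the already explored set is at most $|B(x,n)|/(N-|B(x,n)|)\leq 2C^{n}/N$, using $C^{n}\ll N$ for $\kappa$ small. A union bound over at most $|B(x,n)|\leq C^{n}$ revelation steps gives
\[\P(R(x)<n)\lesssim C^{2n}/N.\]
This yields $\E|[N]\setminus F_{N}(n)|\lesssim C^{2n}$. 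The factor $e^{n}$ in the target absorbs the combinatorial overhead, namely the number of ways of choosing which step closes, bounded via $\binom{m}{k}\leq (em/k)^{k}$ type estimates.

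\textbf{Main obstacle.} The delicate step is making Step 2 rigorous. The relabeling $\sigma$ acts globally, so revealed values are not independent; and a ``collision'' has to be shown to be genuinely necessary for a GCC cycle. In particular, collisions of two $A$-vertices, or two $B$-vertices, inside one component must not count, since $A$ alone may have cycles. I would handle this with the standard sequential exposure of a uniform permutation: conditionally on the values revealed so far, $\sigma$ of a new point is uniform on the remaining $N-O(C^{n})$ values. I would also check that only inter-color identifications, which are exactly those changing $\eta(\GCC)$ in the sense of the case analysis in Lemma~\ref{GCC pas arbre}, can create a non-tree GCC.
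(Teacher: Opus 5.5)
Your reading of the statement as a bound on $\E|[N]\setminus F_{N}(n)|$ with $n=\kappa\log N$ matches what the paper actually proves. A first-moment argument by sequential exposure of the relabelling is a legitimate alternative to the paper's route. The paper instead union-bounds over a base point $y\in B(x,n)$ and over the number $2l$ of colour alternations of a closed walk. It starts from the two-ball estimate $\P\big((B_{A}(x_{1},k)\setminus\{x_{1}\})\cap(B_{B}(x_{2},n-k)\setminus\{x_{2}\})\neq\varnothing\big)\lesssim C^{k}C^{n-k}/N$, and the factor $e^{n}$ comes from $\sum_{l}n^{2l-1}/(2l-1)!$.

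\textbf{The gap.} The central claim of your Step~2 is false. You assert that a cycle in the $\GCC$ forces some revealed $\sigma$-value to land on an already discovered vertex, with probability at most $2C^{n}/N$ per reveal. But once you condition on $A$ and $B$, the $A$-structure attached to each freshly revealed vertex is deterministic. So the cycle can close through $A$-edges while every revealed value is fresh. The simplest instance: $x$ has an $A$-neighbour $y$, the revealed $B$-neighbour $v=\sigma(w)$ lies outside $\{x,y\}$, but $A_{y,v}\neq0$. Then $x$ and $v$ are two connectors shared by the same $A$-component and the same $B$-component, so $\GCC(B(x,1))$ contains a $4$-cycle. In your BFS this cycle closes only when the $A$-neighbours of $y$ are listed, a step carrying no randomness. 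The same thing happens when you reveal $\sigma^{-1}(v)$ for a vertex reached through $A$ and its deterministic $B$-neighbourhood meets preimages already revealed. So the danger is under-counting, not the over-counting of intra-colour cycles that you flag.

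\textbf{Why the naive fix is not enough, and what is missing.} At a reveal made at depth $j$, the event you must control is that the new vertex falls within $A$-distance (resp.\ $B$-distance) $n-j$ of the explored region. That set can have size of order $|B(x,n)|\,C^{n-j}$, not $|B(x,n)|$; this happens, e.g., when $A$ is a disjoint union of deep trees and the explored vertices sit in distinct trees. With this corrected per-step probability, your union bound over $C^{n}$ reveals gives only $C^{3n}/N$, which misses $(eC^{2})^{n}/N$ as soon as $C>e$.

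The missing idea is a depth-weighted count. There are at most $C^{j}$ reveals at depth $j$, each costing $O(C^{n}C^{n-j}/N)$. Summing over $j\le n$ gives $\P(R(x)<n)=O(nC^{2n}/N)$, which does fit under $(eC^{2})^{n}/N$ for large $n$. It is convenient to explore a whole coloured ball at each reveal, so that every closure is charged to the later of the two reveals. This trade-off between the number of branch points at depth $j$ and the remaining radius $n-j$ is exactly what the paper's split of the radii into $k$ and $n-k$ encodes.

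\textbf{Step~1.} Your Step~1 plays no role in Step~2. Its count $(2C)^{2n}$ of walk shapes, paired with a $1/N$ closing probability, would itself overshoot the target, since $4^{n}>e^{n}$. The claim that $e^{n}$ ``absorbs the combinatorial overhead'' is not backed by any computation.
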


\begin{proof}
    Throughout the proof, we let $n$ be an integer smaller than $\kappa \log N$ for some constant $\kappa$ to be chosen later. Notice that
    \begin{equation*}
        \E\left(\left|F_{N}(n)^{c}\right|\right)=\sum_{x=1}^{N}\P\left(\mathcal{GCC}(B(x,n))\text{ is not a tree}\right).
    \end{equation*}

    Let $x_{1},x_{2}\in[N], $ and $2\leq k,l \leq n$ and set 
    \begin{equation*}
        f_{2l}^{a\rightarrow b}(x_{1},x_{2},k):=\P\left(\exists\gamma_{1},\cdots,\gamma_{2l}, \sum_{i=1}^{2l}\ell(\gamma_{i})\leq k,\,x_{1}\underset{\gamma_{1}}{\overset{a}{\sim}}y_{1}\underset{\gamma_{2}}{\overset{b}{\sim}}\cdots \underset{\gamma_{2l-1}}{\overset{a}{\sim}}y_{2l-1}\underset{\gamma_{2l}}{\overset{b}{\sim}}x_{2}\right),
    \end{equation*}
    where $\ell(\gamma)$ is the length (i.e. the number of edges) of the path $\gamma$ in $A+B$ and where the notation $x_{1}\underset{\gamma_{1}}{\overset{a}{\sim}}y_{1}\underset{\gamma_{2}}{\overset{b}{\sim}}\cdots \underset{\gamma_{2l-1}}{\overset{a}{\sim}}y_{2l-1}\underset{\gamma_{2l}}{\overset{b}{\sim}}x_{2}$ stands for the condition that the path $\gamma_{2i+1}$ only uses edges from $A\cap B(x,n) $ and links $y_{2i}$ to $y_{2i+1}$, the path $\gamma_{2i}$ only uses edges from $B\cap B(x,n)$ and links $y_{2i-1}$ to $y_{2i}$, such that $\gamma:=\coprod\gamma_{i}$ does not use twice the same edge, with the convention that $y_{0}=x_{1}$ and $y_{2l}=x_{2}$.

    Hence, we obtain
    \begin{equation}\label{first bound}
        \E\left(\left|F_{N}(n)^{c}\right|\right)\leq\sum_{x=1}^{N}\sum_{y\in B(x,n)}\sum_{l=1}^{\lfloor n/2\rfloor}f^{a\rightarrow b}_{2l}(y,y,n).
    \end{equation}
    Note that we only took into account paths that go through an even number of colored components since a path $\gamma$ of the form $x_{1}\underset{\gamma_{1}}{\overset{a}{\sim}}y_{1}\underset{\gamma_{2}}{\overset{b}{\sim}}\cdots \underset{\gamma_{2l-2}}{\overset{b}{\sim}}y_{2l-2}\underset{\gamma_{2l-1}}{\overset{a}{\sim}}x_{2}$ can be understood as a path going through $2l$ colored components where the last $\gamma_{2l}$ is empty.
    
    Let us now bound $f^{a\rightarrow b}_{2l}(y,y,n)$ recursively in $l$.

    For $l=1$, we bound more generally the following quantity:
    \begin{align*}
        f^{a\rightarrow b}_{2}(x_{1},x_{2},n)&\leq\sum_{k=1}^{n-1}\P\left[\left(B_{A}(x_{1},k)\setminus\{x_{1}\}\right)\cap\left(B_{B}(x_{2},n-k)\setminus\{x_{2}\}\right)\neq\varnothing\right]\\
        &\leq\sum_{k=1}^{n-1}1-\frac{\binom{N-|B_{A}(x_{1},k)|+1}{|B_{B}(x_{2},n-k)|-1}}{\binom{N}{|B_{B}(x_{2},n-k)|-1}},
    \end{align*}
    where we used the fact that the matrices $A$ and $B$ being permutation invariant, we can conjugate $A$ and $B$ by two independent uniform permutation matrices and obtain the second line.

    Since $A$ and $B$ are sparse, the quantity $\frac{|B_{A}(n)||B_{B}(n)|}{N}$ goes to 0 as $N$ goes to infinity for $\kappa<(2\log C)^{-1}s$, see Equation \eqref{bound on the size of the ball}. Hence, using Stirling's formula, we get

    \begin{align*}
        f^{a\rightarrow b}_{2}(x_{1},x_{2},n)&\leq\sum_{j=1}^{n-1}\frac{|B_{A}(x_{1},j)||B_{B}(x_{2},n-j)|}{N}+O\left(n\left(\frac{|B_{A}(n)||B_{B}(n)|}{N}\right)^{2}\right)\\
        &\leq \frac{nC^{n}}{N}+O\left(\frac{nC^{2n}}{N^{2}}\right).
    \end{align*}

    We show by induction that 
    \begin{equation*}
        f_{2l}^{a\rightarrow b}(x_{1},x_{2},n)\leq \frac{n^{2l-1}C^{n}}{(2l-1)! N}(1+o(1)).
    \end{equation*}

    Notice now that for $2l+2\leq n$, 
    \begin{equation*}
        f_{2l+2}^{a\rightarrow b}(x_{1},x_{2},n)\leq\sum_{j_{1}=1}^{n}\sum_{y_{1}\in C_{A}(x_{1},j_{1})}\sum_{j_{2}=1}^{n-j_{1}}\sum_{y_{2}\in C_{B}(x_{2},j_{2})}f^{a\rightarrow b}_{2l}(y_{2},y_{1},n-j_{1}-j_{2}).
    \end{equation*}

    Hence, using the induction hypothesis, we get 

    \begin{align*}
        f_{2l+2}^{a\rightarrow b}(x_{1},x_{2},n)&\leq\sum_{j_{1}+j_{2}\leq n}C^{j_{1}}C^{j_{2}}\frac{(n-j_{1}-j_{2})^{2l-1}C^{n-j_{1}-j_{2}}}{(2l-1)! N}(1+o(1)),\\
        &\leq\frac{n^{2l+1}C^{n}}{(2l+1)! N}(1+o(1)),
    \end{align*}
    where we used the fact that
    \begin{align*}
        \sum_{i+j\leq n}(i+j)^{2l-1}&=n^{2l+1}\frac{1}{n^{2}}\sum_{(i+j)/n\leq 1}\left(\frac{i+j}{n}\right)^{2l-1}\\
        &=n^{2l+1}\int_{0\leq x+y\leq 1}(1-(x+y))^{2l-1}\dd x \dd y (1+o(1)),\\
        &=\frac{n^{2l+1}}{2l(2l+1)}(1+o(1)).
    \end{align*}
    
    Going back to Equation \ref{first bound}, we have
    \begin{equation*}
        \E\left(\left|F_{N}(n)^{c}\right|\right)\leq\sum_{x=1}^{N}\sum_{y\in B(x,n)}\sum_{l=1}^{\lfloor n/2\rfloor}\frac{n^{2l-1}C^{n}}{(2l-1)! N}\leq \sum_{x=1}^{N}\sum_{y\in B(x,n)}\frac{e^{n}C^{n}}{N}\leq(eC^{2})^{n}.
    \end{equation*}
\end{proof}

\begin{proof}[Proof of Theorem \ref{thm B}]
    Let $0<\kappa<1/2$ to be chosen later, let $\eta_{0}$ and $D$ as in Theorem \ref{thm B}. Writing $\Tilde{A}=D^{-1/2}AD^{-1/2}$ and $\Tilde{B}=D^{-1/2}BD^{-1/2}$, we obtain
    \begin{equation*}
        G_{A+B}(D)_{i,i}=D_{i,i}^{-1}\sum_{n=0}^{\infty}\left(\Tilde{A}+\Tilde{B}\right)_{i,i}^{n}=D_{i,i}^{-1}\sum_{n=0}^{\kappa\log N}\left(\Tilde{A}+\Tilde{B}\right)_{i,i}^{n}+D_{i,i}^{-1}\sum_{n=\kappa\log N}^{\infty}\left(\Tilde{A}+\Tilde{B}\right)_{i,i}^{n}.
    \end{equation*}

    The last term on the right-hand side is bounded as follows
    \begin{align*}
         \left|D_{i,i}^{-1}\sum_{n=\kappa\log N}^{\infty}\left(\Tilde{A}+\Tilde{B}\right)_{i,i}^{n}\right|&\leq \frac{\left(\frac{||A||_{op}+||B||_{op}}{d}\right)^{\kappa\log N}}{d-||A||_{op}-||B||_{op}},\\
        &\leq\frac{1}{d}N^{\kappa\log\left(\frac{2C}{d}\right)},
    \end{align*}
    where we recall that $d:=\inf\{|\Im D_{i,i}|,\,1\leq i \leq N\}$. The same goes for $G_{a+b}(D)$ since the operator norms of $a$ and $b$ are the same as those of $A$ and $B$ respectively

    Recall from Corollary \ref{free over diagonal matrices}, that the operator-valued measure $\mu_{a+b}:\D_{N}(\C)\langle\X\rangle\rightarrow\D_{N}(\C)$ describes the law of the free sum over the diagonal of $A$ and $B$. We denote $\overline{a+b}:=D^{-1/2}(a+b) D^{-1/2}$ where $D^{-1/2}$ is to be seen as a diagonal operator through the embedding of diagonal matrices into diagonal operators described in Section 3, and let $\mu_{\overline{a+b}}$ denote the associated operator-valued measure.

    Expanding both operator-valued Cauchy transforms, we obtain
    \begin{align*}
        \E\frac{1}{N}||G_{A+B}(D)-G_{a+b}(D)||_{F}^{2}&\leq \frac{2}{d^{2}}\E\frac{1}{N}\sum_{i=1}^{N}\left|\sum_{n=0}^{\kappa\log N}\left(\Tilde{A}+\Tilde{B}\right)_{i,i}^{n}-\langle(\overline{a+b})^{n}\delta_{i},\delta_{i}\rangle\right|^{2}\\
        &+4\frac{1}{d^{2}}N^{2\kappa\log\left(\frac{
        2C}{d}\right)}
    \end{align*}

    Note now that, looking at the proof of Corollary \ref{cor : comparaison moment a+b et A+B}, $i\in F_{N}(\kappa\log N)\implies \left(\Tilde{A}+\Tilde{B}\right)_{i,i}^{n}=\langle(\overline{a+b})^{n}\delta_{i},\delta_{i}\rangle,\,\forall 1\leq n \leq \kappa\log N$. Indeed, Corollary \ref{cor : comparaison moment a+b et A+B} is a statement on each realization of $a+b$ and $A+B$ and all the remaining terms are equal to 0 because they are injective trace of graph monomials that have their $\GCC$ which is not a tree. Hence we have,
    \begin{align*}
        \E\frac{1}{N}||G_{A+B}(D)-G_{a+b}(D)||_{F}^{2}\leq&\frac{2}{d^{2}}\E\left[\frac{|F_{N}(\kappa\log N)|^{c}}{N}\left(\frac{||A||_{op}+||B||_{op}}{d}\right)^{2\kappa\log N}\right]\\
        &+4\frac{1}{d^{2}}N^{2\kappa\log\left(\frac{2C}{d}\right)},\\
        \leq& \frac{2}{d^{2}}N^{\kappa\log(eC^{2})-1+2\kappa\log\left(\frac{2C}{d}\right)}+4\frac{1}{d^{2}}N^{2\kappa\log\left(\frac{2C}{d}\right)},
    \end{align*}
    where the last inequality comes from Lemma \ref{Markov bound}.
    Taking $\kappa= \frac{1}{1+2\log C}<1/2$, we obtain the wanted bound.
\end{proof}

\appendix

\section{An asymptotic development}
    \begin{lemma}[Two asymptotic developments]\label{Stirling}
\begin{enumerate}
    \item Let $V$ be a set of size dependent on $N$ such that $|V|^{2}/N\rightarrow0$ as $N\rightarrow\infty$.
    \begin{equation*}
        \frac{N!}{(N-|V|)!}=N^{|V|}\left(1+O\left(\frac{|V|^{2}}{N}\right)\right).
    \end{equation*}
    \item Let $V,V_{1},V_{2}$ be sets of size dependent on $N$ and $K_{1},K_{2}$ be numbers also dependent on $N$ with the following conditions:
    \begin{itemize}
        \item We have the inclusions $V_{1},V_{2}\subset V$ and we denote $\delta V:=V_{1}\cap V_{2}$.
        \item The ratio $|V|^{2}/N$ goes to $0$ as $N$ goes to infinity (hence it is also true for $V_{1}$ and $V_{2}$ instead of $V$).
        \item The quantity $\eta:=K_{1}+K_{2}-1-|V_{1}|-|V_{2}|+|V|$ is non positive.
    \end{itemize}
    Then, we have
    \begin{equation*}
        \Gamma_{N}:=N\frac{(N-|V_{1}|)!(N-|V_{2}|)!}{(N-|V|)!N!}N^{K_{1}-1}N^{K_{2}-1}=N^{\eta}\left(1+O\left(\frac{|V|^{2}}{N}\right)\right)
    \end{equation*}
\end{enumerate}
    
\end{lemma}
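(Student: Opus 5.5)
We treat the two estimates of Lemma \ref{Stirling} separately; both reduce to elementary products once everything is written as falling factorials.

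For the first point, write
\begin{equation*}
\frac{N!}{(N-|V|)!}=N^{|V|}\prod_{k=0}^{|V|-1}\Bigl(1-\frac{k}{N}\Bigr).
\end{equation*}
Take logarithms and use $\log(1-x)=-x+O(x^{2})$, which is uniform for $x\le |V|/N\to 0$. The logarithm of the product is then
\begin{equation*}
-\sum_{k<|V|}\frac{k}{N}+O\Bigl(\sum_{k<|V|} k^{2}/N^{2}\Bigr)=O\bigl(|V|^{2}/N\bigr),
\end{equation*}
since $|V|^{3}/N^{2}=o(|V|^{2}/N)$. Because $|V|^{2}/N\to0$, we get $e^{O(|V|^{2}/N)}=1+O(|V|^{2}/N)$, and the first point follows.

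For the second point, apply the first point to each of the four factorial ratios. Write $(N-|V_1|)!=N!\big/\frac{N!}{(N-|V_1|)!}$, and similarly for $V_2$ and $V$. This gives
\begin{equation*}
\frac{(N-|V_{1}|)!(N-|V_{2}|)!}{(N-|V|)!N!}=\frac{N!/(N-|V|)!}{\bigl(N!/(N-|V_{1}|)!\bigr)\bigl(N!/(N-|V_{2}|)!\bigr)}=N^{|V|-|V_{1}|-|V_{2}|}\,\frac{1+O(|V|^{2}/N)}{(1+O(|V_{1}|^{2}/N))(1+O(|V_{2}|^{2}/N))}.
\end{equation*}
Since $|V_i|\le|V|$, the quotient of the correction factors is again $1+O(|V|^{2}/N)$; a quotient of such factors is harmless because the error terms tend to $0$. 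Multiplying by $N\cdot N^{K_1-1}N^{K_2-1}$, the total exponent of $N$ is
\begin{equation*}
1+K_1-1+K_2-1+|V|-|V_1|-|V_2|=\eta,
\end{equation*}
which is exactly the claimed $N^{\eta}(1+O(|V|^{2}/N))$.

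The hypothesis $\eta\le0$ is not needed for the asymptotic identity itself; it only matters for how the lemma is used later, where $N^{\eta}$ is at most a bounded factor. The hypothesis $\delta V=V_1\cap V_2$ is also not used in this computation. The only step needing care is the uniformity of the $\log(1-x)$ expansion when $|V|$ grows with $N$. The condition $|V|^{2}/N\to0$ handles this, because it forces $|V|/N\to0$ and makes the summed quadratic error negligible.
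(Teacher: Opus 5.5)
Your proof is correct. It differs from the paper's mainly in the first point.

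The paper uses Stirling's formula. It writes $N!$ and $(N-|V|)!$ through Stirling, expands $(N-|V|+\tfrac12)\log(1-|V|/N)$, and absorbs both the resulting quadratic terms and the $(1+O(1/N))$ Stirling remainder into $O(|V|^{2}/N)$. For the second point it re-expands all four factorials by Stirling and recognizes the product $f_{+}(V_{1})f_{+}(V_{2})f_{-}(V)$ of the corrections computed in the first point.

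You instead use the exact identity $N!/(N-|V|)!=N^{|V|}\prod_{k<|V|}(1-k/N)$. This needs no asymptotic formula at all. It even gives a non-asymptotic bound: for $|V|\le N/2$, the logarithm of the product is at most $|V|^{2}/N$ in absolute value. You then make the second point a literal corollary of the first, by writing $\frac{(N-|V_1|)!(N-|V_2|)!}{(N-|V|)!N!}$ exactly as a quotient of three falling factorials.

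Structurally the second point is the same in both arguments: three instances of the first estimate, combined using $|V_i|\le|V|$ and the fact that the error terms tend to $0$. Your version is shorter and self-contained, and the paper's Stirling route gains nothing extra here.

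Your remark that neither $\eta\le 0$ nor $\delta V$ is used in the lemma itself is accurate; the paper's proof does not use them either. The sign of $\eta$ only matters where the lemma is applied, to control the size of $N^{\eta}$.
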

\begin{proof}
    For the first part, we use Stirling's equivalent formula:
    \begin{align*}
        \frac{N!}{(N-|V|)!}&=\frac{N^{N}e^{N-|V|}}{(N-|V|)^{N-|V|}e^{N}}\left(1-\frac{|V|}{N}\right)^{-1/2}\left(1+O\left(\frac{1}{N}\right)\right),\\
        &=N^{|V|}\exp\left(-\left(N-|V|+\frac{1}{2}\right)\log\left(1-\frac{|V|}{N}\right)-|V|\right)\left(1+O\left(\frac{1}{N}\right)\right),\\
        &=N^{|V|}\exp\left(-\frac{|V|^{2}}{N}+\frac{|V|}{2N}+\frac{|V|^{2}}{2N}+O\left(\frac{|V|^{3}}{N^{2}}\right)\right)\left(1+O\left(\frac{1}{N}\right)\right),\\
        &=N^{|V|}\left(1+O\left(\frac{|V|^{2}}{N}\right)\right),
    \end{align*}
    where the asymptotic development in the exponential is valid since all the terms are of the form $\frac{|V|^{k+1}}{N^{k}}$ which goes to 0 as $N$ goes to infinity by assumption.
    For the second point, we note that we already proved that 
    \begin{equation*}
        f_{\pm}(V):=\exp\left(\pm\left[\left(N-|V|+\frac{1}{2}\right)\log\left(1-\frac{|V|}{N}\right)+|V|\right]\right)=1+O\left(\frac{|V|^{2}}{N}\right),
    \end{equation*}
    and similarly for $V_{1},V_{2}$ since they all verify the assumptions of the first point. Again, using the asymptotic development of the factorial, we get
    \begin{align*}
        \Gamma_{N}&=N^{\eta}\frac{\left(1-\frac{|V_{1}|}{N}\right)^{N-|V_{1}|+1/2}\left(1-\frac{|V_{2}|}{N}\right)^{N-|V_{2}|+1/2}}{\left(1-\frac{|V|}{N}\right)^{N-|V|+1/2}}e^{|V_{1}|+|V_{2}|-|V|}\left(1+O\left(\frac{1}{N}\right)\right),\\
        &=N^{\eta}f_{+}(V_{1})f_{+}(V_{2})f_{-}(V)\left(1+O\left(\frac{1}{N}\right)\right)=N^{\eta}\left(1+O\left(\frac{|V|^{2}}{N}\right)\right).
    \end{align*}
\end{proof}

\bibliography{biblio.bib}{}
\bibliographystyle{plain}

\end{document}